\theoremstyle{plain}
\newtheorem{theorem}{Theorem}[section]
\newtheorem{lemma}[theorem]{Lemma}
\newtheorem{corollary}[theorem]{Corollary}
\theoremstyle{definition}
\newtheorem{definition}[theorem]{Definition}
\numberwithin{equation}{section}
\def\bhyp#1{\begin{equation}\label{#1}\begin{array}{c}}
\def\ehyp{\end{array}\end{equation}}
\theoremstyle{remark}
\newtheorem{remark}{Remark}[section]
\definecolor{labelkey}{rgb}{0.6,0,1}
\newcommand{\cT}{\mathcal{T}}
\newcommand{\cF}{\mathcal{F}}
\newcommand{\cS}{\mathcal{S}}
\newcommand{\fS}{\mathfrak{S}}
\newcommand{\cD}{\mathcal{D}}
\newcommand{\cI}{\mathcal{I}}
\newcommand{\distt}{\mathcal{D'}(\Omega\times(0,T))} 
\newcommand{\RR}{{\mathbb R}}
\newcommand{\NN}{{\mathbb N}}
\renewcommand{\O}{\Omega}
\newcommand{\K}{\mathbf{K}}
\newcommand{\U}{\mathbf{u}}
\newcommand{\IdM}{\mathbf{I}}
\newcommand{\E}{\mathbf{E}}
\newcommand{\normvec}{\mathbf{n}}
\newcommand{\bvarphi}{\boldsymbol{\varphi}}
\newcommand{\A}{\mathbf{A}}
\newcommand{\one}{\mathbf{1}}
\def\XXint#1#2#3{{\setbox0=\hbox{$#1{#2#3}{\int}$ }
\vcenter{\hbox{$#2#3$ }}\kern-.6\wd0}}
\newcommand{\trunc}{{\mathbb{T}}} 
\newcommand{\dive}{\operatorname{div}} 
\newcommand{\ud}{\, \mathrm{d}} 
\newcommand{\norm}[1]{\left\lVert#1\right\rVert} 
\newcommand{\Id}{\operatorname{Id}} 
\newcommand{\ol}[1]{\overline{#1}}
\newcommand{\weakto}{\rightharpoonup}
\newcommand{\weak}{\mbox{-}\mathrm{w}}
\newcommand{\dm}{d_m}
\newcommand{\dl}{d_l}
\newcommand{\dt}{d_t}
\newcommand{\Leb}[3][ ]{L^{#2}(0, T; L^{#3}(\O)^{#1})} 
\newcommand{\D}[3][ ]{\mathbf{D}_{#1}(#2, #3)}
\newcommand{\Dsq}[3][ ]{\mathbf{D}_{#1}^{1/2}(#2, #3)}
\newcommand{\DD}{\mathbf{D}}
\newcommand{\Ubar}{\overline{\U}}
\newcommand{\cbar}{\overline{c}}
\newcommand{\pbar}{\overline{p}}
\newcommand{\disc}{\cD}
\newcommand{\discm}{\cD_{m}}
\newcommand{\unknowns}{X_\disc}
\newcommand{\deltat}{\delta t^{(n+\frac{1}{2})}}
\newcommand{\deltatm}{\delta t_m^{(n+\frac{1}{2})}}
\newcommand{\deltatD}{\delta t_\disc}
\newcommand{\PiD}{\Pi_{\disc}}
\newcommand{\PiDe}{\Pi_{\disc}^{\mathrm{ex}}}
\newcommand{\PiDm}{\Pi_{\disc_m}}
\newcommand{\PiDme}{\Pi_{\disc_m}^{\mathrm{ex}}}
\newcommand{\gradD}{\nabla_{\disc}}
\newcommand{\gradDm}{\nabla_{\disc_m}}
\newcommand{\gradDme}{\nabla_{\disc_m}^{\mathrm{ex}}}
\newcommand{\Udisc}{\U_\disc}
\newcommand{\deltaD}{\delta_{\disc}}
\newcommand{\interpD}{\cI_\disc}
\newcommand{\unknownsm}{X_{\disc_m}}
\newcommand{\interpDm}{\cI_{\disc_m}}
\newcommand{\deltatDm}{\delta t_{\disc_m}}
\newcommand{\pDm}{p_{\discm}}
\newcommand{\cDm}{c_{\discm}}
\newcommand{\UDm}{\U_{\discm}}
\newcommand{\deltaDm}{\delta_{\disc_m}}
\newcommand{\Tm}{T_{m}}
\newcommand{\km}{k_{m}}
\newcommand{\ellip}{\mathrm{ell}}
\newcommand{\para}{\mathrm{para}}
\begin{document}

\title[]{Unified convergence analysis of numerical schemes for a miscible displacement problem}

\author[J. Droniou]{J\'er\^ome Droniou$^1$}
	\address{$^{1,4}$School of Mathematical Sciences\\
		Monash University\\
		Clayton, Victoria 3800 Australia}
	\email{jerome.droniou@monash.edu}
	\thanks{$^1$Corresponding author. Tel: (+61) 3 9905 4489. Fax: (+61) 3 9905 4403}
\author[R. Eymard]{Robert Eymard$^2$}
	\address{$^{2,3}$Universit\'e Paris-Est\\
		Laboratoire d'Analyse et de Math\'ematiques Appliqu\'ees\\
		UMR 8050\\
		5 boulevard Descartes\\
		Champs-sur-Marne 77454 Marne-la-Vall\'ee Cedex 2\\
		France}
	\email{robert.eymard@u-pem.fr}	
\author[A. Prignet]{Alain Prignet$^3$}
	\email{alain.prignet@u-pem.fr}	
\author[K.S. Talbot]{Kyle S. Talbot$^4$}
	\email{kyle.talbot@gmail.com}
	
\date{\today}

\thanks{The first author was supported by the Australian Research Council's Discovery Projects funding scheme
(project number DP170100605). The fourth author was supported by an Endeavour Research Fellowship from the Australian Government.}

\keywords{Miscible fluid flow, coupled elliptic--parabolic problem,
convergence analysis, uniform-in-time convergence,
gradient discretisation method, finite differences, mass-lumped finite elements}
\subjclass[2010]{65M06, 65M08, 65M12, 65M60}

\maketitle

\begin{abstract}
This article performs a unified convergence analysis of a variety of numerical methods
for a model of the miscible displacement of one incompressible fluid by another
through a porous medium. The unified analysis is enabled through the framework of
the gradient discretisation method for diffusion operators on generic grids. 
We use it to establish a novel convergence result in $L^\infty(0,T; L^2(\Omega))$
of the approximate concentration using minimal regularity assumptions on the solution
to the continuous problem. 
The convection term in the concentration equation is discretised
using a centred scheme. {We present a variety of numerical tests from the literature,
as well as a novel analytical test case. The performance of two schemes are compared on these
tests; both are poor in the case of variable viscosity, small diffusion and medium to small time steps. We show that upstreaming is not a good option to recover stable and accurate solutions, and we propose a correction to recover stable and accurate schemes for all time steps and all ranges of diffusion.}{}
\end{abstract}

\section{Introduction} \label{sec:intro}

\subsection{The miscible displacement model} \label{ssec:model}

The single-phase, miscible displacement of one incompressible fluid by another
through a porous medium is described by a nonlinearly-coupled initial-boundary value
problem \cite{cj86,pr62,rw83}. Denote the porous medium by $\O$ and write $(0,T)$ for the time period over which the displacement
occurs. Neglecting density variations of the mixture within the domain, the unknowns are the hydraulic head $p$ (simply called ``pressure'' in this paper) and the concentration $c$ of one of the components
in the mixture, from which one computes the Darcy velocity $\U$ of the fluid mixture. 
The model reads
\begin{subequations} \label{peacemanmodel}
\begin{equation}
	\left.\begin{aligned}
		&\U(x,t) = -\frac{\K(x)}{\mu(c(x,t))}\nabla p(x,t) \\
		&\dive\U(x,t) = (q^I - q^P)(x,t)
	\end{aligned}\right\}, \quad(x,t)\in\O\times(0,T);\label{eq:darcy}
\end{equation} 
\begin{equation} \label{eq:transport}
\Phi(x)\partial_{t}c(x,t)-\dive\left(\D{x}{\U(x,t)}\nabla c - c\U\right)(x,t) = (\hat{c}q^{I} - cq^{P})(x,t),
\quad(x,t)\in\O\times(0,T).
\end{equation}
The reservoir-dependent quantities of porosity and absolute permeability are $\Phi$ 
and $\K$, respectively. The coefficient $\D{x}{\U}$ is the diffusion-dispersion tensor,
and the coefficient $\hat{c}$ is the injected concentration. The sums of injection 
well source terms and production well sink terms are $q^I$ and $q^P$, respectively.
Assuming that the reservoir boundary is impermeable gives the no-flow boundary conditions
\begin{align}
\U(x,t)\cdot\normvec(x) &= 0, \quad(x,t)\in\partial\O\times(0,T),\mbox{ and} \label{eq:noflow1}\\
\D{x}{\U}\nabla c(x,t)\cdot\normvec(x) &= 0, \quad(x,t)\in\partial\O\times(0,T), \label{eq:noflow2}
\end{align}
where $\normvec(x)$ denotes the exterior unit normal to $\partial\O$ at $x\in\partial\O$.
The first of these enforces a compatibility condition upon the source terms:
\begin{equation} \label{eq:compat}
\int_\O q^I(x,t)\ud x = \int_\O q^P(x, t)\ud x\quad \mbox{for all $t\in(0, T)$.} 
\end{equation}
One prescribes the initial concentration
\begin{equation}
c(x,0)=c_{0}(x), \quad x\in\O, \label{eq:initialconc}
\end{equation}
and the system is completed with a normalisation condition on the pressure to eliminate
arbitrary constants in the solution $p$ to \eqref{eq:darcy}:
\begin{equation}
\int_\O p(x,t)\ud x = 0 \quad\mbox{for all $t\in(0, T)$.} \label{eq:normp}
\end{equation}
\end{subequations}
Russell and Wheeler \cite{rw83} give a complete derivation of the system \eqref{peacemanmodel},
which is hereafter called the miscible displacement model.

\subsection{Literature review}
The novelty of this article's main result --- Theorem \ref{th:main} --- is twofold. 
It presents what we believe is the first unified 
convergence analysis of a number of numerical schemes for the 
approximation of solutions to \eqref{peacemanmodel}, and it provides a uniform-in-time
strong-in-space convergence property for the concentration.
The unified analysis is performed using a generic framework which, given a classical numerical method (a list of which is given below) for linear elliptic equations, provides a way to design from it a numerical scheme for the miscible displacement problem that ensures convergence.
The uniform-in-time convergence analysis uses a recently-discovered
technique \cite{DE15} developed for scalar degenerate parabolic equations to establish convergence in
$\Leb{\infty}{2}$ (i.e. uniform-in-time) of the approximate concentrations, thereby 
improving upon previous results that establish convergence in $\Leb{2}{2}$ \cite{bjm09,rivwalk11} or in 
$\Leb{q}{r}$ for all $q<\infty$ and $r<2$ \cite{cd07}.

The miscible displacement model has a large and diverse numerical literature \cite{fe02}.
Early work by Peaceman \cite{pe66,pe77} and Douglas \cite{douglas70,douglas83} 
uses finite differences. Finite element (FE) and mixed finite element (MFE) methods
for the miscible displacement problem were the subject of considerable
interest in the $1980$s, with several studies conducted by Douglas, Ewing, Russell,
Wheeler and their colleagues \cite{dew83,ew83,ew80,ew84,rw83}. 

In general, different methods for \eqref{eq:darcy} and \eqref{eq:transport} are combined
to produce a scheme for \eqref{peacemanmodel}. Indeed, the convection-dominated nature
of \eqref{eq:transport} leads Russell and others to develop characteristic tracking methods
for handling this equation \cite{erw84,russell82,russell85}. 
Related to these are the so-called Eulerian-Lagrangian localised adjoint methods (ELLAMs)
\cite{crhe90,wan00}.
Finite volume (FV) and mixed finite volume (MFV) methods have been studied
for the transport equation alone \cite{ao08} (with a MFE method for the pressure equation)
and the whole system \cite{cd07}, and also Discrete Duality Finite Volume (DDFV) methods
\cite{ckm13,ckm15}. Discontinuous Galerkin (dG) methods are also often employed in the
numerical study of \eqref{peacemanmodel} \cite{bjm09,lirivwalk15,rivwalk11,srw02,wd80}.

\subsection{Motivation and framework for the analysis}
Given the diversity of methods applied to \eqref{peacemanmodel} and their corresponding
convergences, a natural question to ask is whether we can unify these analyses so
that a single convergence proof holds for all (or at least some) of the methods. 
A unified convergence analysis of this nature requires an appropriate framework; 
one that is sufficiently abstract so as to encompass as many numerical methods as 
possible, but sufficiently concrete to recover existing results for the methods in 
question. Such a framework is the Gradient Discretisation Method (GDM), introduced 
and developed by Droniou, Eymard, Gallou\"et, Guichard, Herbin and their collaborators, 
and which is the subject of a forthcoming monograph \cite{gdmbook} to which we refer frequently.

Section \ref{sec:discprob} gives a reasonably self-contained presentation of the elements of the 
GDM required for the subsequent analysis of \eqref{peacemanmodel}. Following the 
GDM literature, it identifies the four key properties of coercivity, consistency,
limit-conformity and compactness that a numerical method must satisfy
in order for the subsequent proof of Theorem \ref{th:main} in Section \ref{sec:convergence} to apply.
If a numerical method can be written in such a manner that it satisfies these four
properties, then Theorem \ref{th:main} shows that it will approximate
solutions to \eqref{peacemanmodel} with convergences prescribed in the statement of
the theorem. In particular, the approximate concentrations will converge in $\Leb{\infty}{2}$.

At the time of writing, methods known to satisfy the GDM framework include 
FE with mass lumping \cite{ciarlet91};
the Crouzeix-Raviart non-conforming FE, with or without mass-lumping \cite{cr73,ernguer04};
the Raviart--Thomas MFE \cite{brezfort91}; the Discontinuous Galerkin scheme in its Symmetric Interior Penalty version \cite{dip2012math};
the Multi-Point Flux Approximation (MPFA) O-method \cite{abbm96,edrog98}; 
DDFV methods in dimension two \cite{abh07ddfv,herm03ddfv} and
the CeVeFE--DDFV scheme in dimension three \cite{ch11ddfv};
the Hybrid Mimetic Mixed (HMM) family \cite{degh10unified}, which includes 
the SUSHI scheme \cite{egh10}, Mixed Finite Volumes \cite{de06mixed} and mixed-hybrid
Mimetic Finite Differences (MFD) \cite{bls05mimetic}; nodal MFD \cite{bbl09nodal};
and the Vertex Approximate Gradient (VAG) scheme \cite{egh12}.
Theorem \ref{th:main} therefore applies to these methods, when a centred
discretisation is employed for the convection term.

For most nonlinear models, convergence proofs using the GDM framework are based on compactness techniques. 
In contrast to establishing error estimates on the solution --- the method favoured
by most studies in the literature cited above --- such analyses do not require 
uniqueness or regularity of the solution to the continuous problem, assumptions
that are inconsistent with what the physical problem suggests and what the theory
provides (see the discussion in Droniou, Eymard and Herbin \cite{deh16generic}). 
The cost of removing these uniqueness/regularity assumptions is the ability to establish
rates of convergence with respect to discretisation parameters. 
Examples of studies that employ compactness techniques include Chainais-Hillairet--Droniou (MFV)
\cite{cd07}; Amaziane and Ossmani (MFE/FV) \cite{ao08}; Bartels, Jensen and M\"uller (dG) \cite{bjm09}; 
Rivi\`ere and Walkington (dG/MFE) \cite{rivwalk11} and subsequently 
Li, Rivi\`ere and Walkington \cite{lirivwalk15} (dG).

Compactness techniques first establish a priori energy estimates
on the solution to the numerical scheme, which yield weak compactness in the appropriate spaces.  
Ensuring convergence of the numerical solutions for
nonlinear problems such as \eqref{eq:transport} typically requires stronger compactness
than what the estimates alone afford. For time-dependent problems, one obtains such compactness 
by estimating the temporal variation of the numerical solution. From here one may apply
discrete analogues of the Aubin--Simon lemma.

This is the procedure we employ herein. Our estimates on the discrete pressure
and concentration are a straightforward
adaptation of Chainais-Hillairet--Droniou \cite{cd07}. The discrete time derivative
estimate Lemma \ref{lem:dtc} is in the spirit of Droniou--Eymard--Gallou\"et--Herbin \cite{degh13},
and for the convergence of the scheme we adapt many arguments from 
Eymard--Gallou\"et--Guichard--Masson \cite{tp}, who study a related two-phase flow
problem. 

\subsection{Centred discretisation of the convection term} \label{ssec:convection}
The GDM framework offers a generic discretisation of diffusion operators,
but \eqref{peacemanmodel} features a (dominant) convection term $\dive(c\U)$. This
term is usually discretised with an upstream weighted scheme, or occasionally
the aforementioned modified method of characteristics. The motivation for our use of a centred
discretisation is that it is an opportunity to compare the upstream
and centred approaches by the results of numerical experiments conducted using two
simple schemes that we present in Section \ref{ssec:examples}. {The numerical
results provided in Section \ref{sec:experiments} include test cases from the literature, as well
as a novel analytical test case. As expected, they show that both schemes are well-behaved in the presence of sufficient diffusion. With small dispersion and the absence of molecular diffusion, centred schemes also behave rather well for time steps of the same magnitude as those commonly used in the literature. However, they display large instabilities for smaller time steps, and upstream versions similarly do not produce acceptable results. We propose a way to introduce in centred schemes some additional diffusion to recover accurate and stable results for a wide range of time steps. This diffusion is isotropic, scaled by the magnitude of the Darcy velocity, and vanishes with the mesh size.}

Constructing a centred scheme is straightforward in the GDM framework. 
We mention however that including other kinds of advection discretisations
in a unified analysis is possible, by following the ideas of Beir{\~a}o da Veiga, Droniou and Manzini 
\cite{bdvdm11} for HMM methods (or other face-based methods), and Eymard, Feron
and Guichard in the context of incompressible Navier--Stokes \cite{efg16ns}.

Let us finally remark that, to enable the convergence analysis, we apply a truncation (onto the unit interval) to the concentration in the convection term, and we add a boundedness hypothesis on $\DD$ to remove the truncation
from the limit equation (see Remark \ref{rem:DDbounded}).


\subsection{Notation}
For a topological vector space $X(\O)$ of functions on $\O$, we write $(X(\O))'$ 
for its topological dual. When writing the duality pairing 
$\langle\cdot,\cdot\rangle_{(X(\O))',X(\O)}$, we omit the subscripts if they are clear
from the context. 

When $z\in(1,\infty)$ is a Lebesgue/Sobolev exponent, we write $z'=\frac{z}{z-1}$ for its conjugate. 
We denote by $H_\star^1(\O)$ those elements of $H^1(\O)$ whose integral over $\O$ 
vanishes, equipped with the usual norms. 

We use $C$ to denote an arbitrary positive constant that may change from line to
line. When $C$ appears in an estimate we track only its relevant dependencies 
(or non-dependencies, as is frequently the case).
These dependencies are understood to be nondecreasing. 

Given $x\in\O$ and $\zeta\in\RR^d$, we write $\Dsq{x}{\zeta}$ for the 
square root of $\D{x}{\zeta}$, which is well defined since we always assume that
$\D{x}{\zeta}$ is a symmetric, positive-definite matrix; similarly for $\A$ and
$\A^{1/2}$ defined in the next section.

\section{Problem reformulation and assumptions} \label{sec:prelims}

In order to simplify the presentation, we write the miscible displacement 
model in the following synthesised form, henceforth using $(\pbar,\Ubar,\cbar)$ 
to denote exact solutions and $(p,\U,c)$ to denote approximate 
solutions obtained by the numerical scheme:
\begin{subequations} \label{eq:model}
\begin{align}
&\left.
	\begin{aligned}
	&\dive(\Ubar) = q^I - q^P \quad\mbox{in $\O\times(0,T)$,}&\qquad
	&\Ubar = -\A(\cdot,\cbar)\nabla\pbar \quad\mbox{in $\O\times(0,T)$,}\\
	&\int_\O\pbar(x,\cdot)\ud x = 0 \quad\mbox{on $(0,T)$,}&\qquad
	&\Ubar\cdot\normvec=0 \quad\mbox{on $\partial\O\times(0,T)$,}
	\end{aligned}
\right\} \label{eq:elliptic}\\
&\left.
	\begin{aligned}
	&\Phi\partial_{t}\cbar - \dive(\D{\cdot}{\Ubar}\nabla\cbar) +\dive(\cbar\,\Ubar)
	= \hat{c}q^I - \cbar q^{P}\quad\mbox{in $\O\times(0,T)$,}\\
	&\cbar(\cdot,0) = c_0 \quad\mbox{in $\O$,}\\
	&\D{\cdot}{\Ubar}\nabla\cbar\cdot\normvec = 0 \quad\mbox{on $\partial\O\times(0,T)$.}
	\end{aligned}
\right\} \label{eq:parabolic}
\end{align}
\end{subequations}
\begin{subequations}\label{assumptions}
Our assumptions on the data are then as follows.
\bhyp{hyp:domain}
\O \mbox{ is a bounded, connected {polytopal} subset of $\RR^d$, $d=1,2$ or $3$, and
$T>0$.}
\ehyp
Denote by $\cS_d(\RR)$ the set of $d\times d$ symmetric matrices with real entries. 
The tensor $\A$ encodes the absolute permeability $\K$ and viscosity $\mu$:
\bhyp{hyp:A}
\A:\O\times\RR\to \cS_d(\RR)\mbox{ is a Carath\'eodory function such that}\\
\exists\alpha_{\A}>0, \, \exists \Lambda_{\A}>0 
\mbox{ such that, for a.e. $x\in\O$, all $s\in\RR$ and all $\xi\in\RR^d$,}\\
\A(x,s)\xi\cdot\xi\geq\alpha_{\A}|\xi|^2 \mbox{ and }|\A(x,s)|\leq\Lambda_{\A}. 
\ehyp
We assume that the diffusion-dispersion tensor satisfies
\bhyp{hyp:D}
\DD:\O\times\RR^{d}\to\cS_d(\RR) \mbox{ is a Carath\'eodory function such that}\\
\exists \alpha_{\DD} > 0, \, \exists \Lambda_{\DD} > 0 \mbox{ such that, for a.e. $x\in\O$ and all }\zeta, \xi \in \RR^{d},\\
\D{x}{\zeta}\xi\cdot\xi\geq\alpha_{\DD}|\xi|^{2}\mbox{ and }
|\D{x}{\zeta}|\leq \Lambda_{\DD}.
\ehyp
The assumptions on the porosity, injected concentration and initial concentration are
standard:
\bhyp{hyp:porosity}
\Phi \in L^{\infty}(\Omega) \mbox{ and there exists } \phi_{\ast} > 0 \mbox{ such that for a.e. $x \in \Omega$, }
\phi_{\ast} \leq \Phi(x) \leq \phi_{\ast}^{-1};
\ehyp
\bhyp{hyp:injectedconc}
\hat{c}\in L^{\infty}(\O\times(0,T))\mbox{ satisfies }0\leq\hat{c}(x, t)\leq 1 \mbox{ for a.e. }(x, t)\in\O\times(0, T);
\ehyp
\bhyp{hyp:initialconc}
c_{0}\in L^{\infty}(\O)\mbox{ satisfies }0\leq c_{0}(x)\leq 1\mbox{ for a.e. }x\in\O.
\ehyp
Finally, we assume that
\bhyp{hyp:source}
q^I, q^P \in L^{\infty}(\Omega\times(0,T)) \mbox{ are nonnegative and such that }\\
\int_\O q^I(x,\cdot)\ud x= \int_\O q^P(x,\cdot)\ud x \mbox{ a.e. in }(0,T).
\ehyp
\end{subequations}
\begin{remark}\label{rem:ddt}
Peaceman showed \cite{pe66} that the diffusion-dispersion tensor takes the form
\begin{equation} \label{eq:ddt}
\D{x}{\U} = \Phi(x)\bigg(\dm\IdM + |\U|\Big(\dl \E(\U) + \dt(\IdM - \E(\U))\Big)\bigg),
\quad\mbox{with}\quad
\E(\U)= \left(\frac{\U_i\U_j}{|\U|^2}\right)_{1\leq i,j\leq d}.
\end{equation}
Here $\IdM$ is the $d$-dimensional identity matrix, $\dm>0$ is the molecular diffusion coefficient,
and $\dl>0$ and $\dt>0$ are the longitudinal and transverse mechanical dispersion
coefficients, respectively. 

Although \eqref{eq:ddt} satisfies the coercivity condition in {\eqref{hyp:D}}, it is
not uniformly bounded. Indeed, one can show that
$|\D{x}{\U}|\leq C(1+|\U|)$, where $C>0$. The necessity of our stronger assumption
on $\DD$ can be traced to our choice of discretisation for the convection term in 
\eqref{eq:parabolic}; we discuss this further below.
\end{remark}
We henceforth supress the dependence on space of $\A$ and $\DD$ from the notation by writing
$\A(x,c)=\A(c)$ and $\D{x}{\U}=\DD(\U)$.
\begin{remark}\label{rem:sources}
The assumption that the sources are bounded is primarily to simplify the analysis.
Indeed, it suffices to take $q^I\in\Leb{\infty}{2}$ and $q^P\in\Leb{\infty}{r}$, where $r>1$ if
$d=2$ and $r\geq\frac{3}{2}$ if $d=3$.
These spatial regularities on $q^P$ arise from the need to bound the production well term in the discrete
time derivative estimate \eqref{est:dtc} below, which can be accomplished using a discrete
Sobolev inequality \cite[Lemma B.24]{gdmbook}. Employing this inequality, one can 
improve the spatial regularity of both the discrete test function and the 
approximation to $\cbar$ from $L^2$ to $L^q$ for all $q<\infty$ (if $d=2$) or to
$L^6$ (if $d=3$). Whilst most of the schemes that we consider in the Gradient Discretisation 
Method framework satisfy such a discrete Sobolev inequality,
its sole usage herein would be in the estimate \eqref{est:dtc}.
\end{remark}

The main result of this article demonstrates that our approximate solutions converge
to the following notion of weak solution to \eqref{eq:model}, the existence of which
is due to Feng \cite{fe95}, Chen and Ewing \cite{ce99} (both of whom assume Peaceman's 
diffusion-dispersion tensor) and Fabrie and Gallou\"et \cite{fg00} (who use the assumption
\eqref{hyp:D}).
\begin{definition} \label{def:weaksol}
Assume \eqref{assumptions}. A weak solution to \eqref{eq:model}
is a pair of functions $(\pbar,\cbar)$ satisfying
\begin{equation} \label{eq:weaksol}
	\left.
	\begin{gathered}
	\cbar\in L^2(0,T; H^1(\O))\cap C([0,T];L^2(\O)),\quad 0\leq\cbar(x,t)\leq1\quad\mbox{for a.e. $(x,t)\in\O\times(0,T)$;}\\
	\Phi\partial_{t}\cbar\in L^2(0,T; (H^{1}(\O))'),\quad \cbar(\cdot,0)=c_0\quad\mbox{in $L^2(\O)$};\\
	\pbar\in L^\infty(0,T;H^1_{\star}(\O)),\quad\Ubar(x,t)=-\A(\cbar(x,t))\nabla\pbar(x,t);\\
	-\int_0^T\int_\O\Ubar(x,t)\cdot\nabla\varphi(x,t)\ud x\ud t\\
	= \int_0^T\int_\O \left( q^I - q^P\right)(x,t)\varphi(x,t)\ud x\ud t 
	\quad\forall\varphi\in L^{1}(0,T; H^1(\O));\\
	\int_0^T\langle\Phi\partial_{t}\cbar(\cdot,t),\psi(\cdot,t)\rangle\ud t
	+ \int_0^T\int_\O\DD(\Ubar(x,t))\nabla\cbar(x,t)\cdot\nabla\psi(x,t)\ud x\ud t\\
	- \int_0^T\int_\O\cbar(x,t)\,\Ubar(x,t)\cdot\nabla\psi(x,t)\ud x\ud t \\
	= \int_0^T\int_\O\left( \hat{c}q^I - \cbar q^P\right)(x,t)\psi(x,t)\ud x\ud t,
	\quad\forall\psi\in L^{2}(0,T; H^{1}(\O)).
	\end{gathered} 
	\right\}
\end{equation}
\end{definition}
There are two noteworthy features to this definition. First, the regularity of $\cbar$ matches
that of $\varphi$, so one can take $\varphi=\cbar$. 
In doing so, by integrating by parts on the time derivative term and using the elliptic 
equation to transform the convective term (see the proof of \cite[Proposition 3.1]{dt14}), 
it is straightforward to show that for every $T_0\in(0,T)$, the solution
satisfies the identity
\begin{equation} \label{eq:energyid}
	\begin{gathered} 
	\frac{1}{2}\int_\O\Phi(x)\left(\cbar(x,T_0)^2 - c_0(x)^2\right)\ud x
	= \int_0^{T_0}\int_\O\left(\cbar\hat{c}q^I\right)(x,t)\ud x\ud t \\
	- \frac{1}{2}\int_0^{T_0}\int_\O \left(\cbar^2(q^I + q^P)\right)(x,t)\ud x\ud t
	- \int_0^{T_0}\int_\O\DD(\Ubar(x,t))\nabla\cbar(x,t)\cdot\nabla\cbar(x,t)\ud x\ud t.
	\end{gathered}
\end{equation}
We will see that the ability to take $\cbar$ as a test function in the transport
equation is critical to ensuring our generic discretisations properly approximate
\eqref{eq:weaksol}, and this resultant identity enables improved temporal convergence
of the approximation to $\cbar$.

We consider a solution to be a pair
$(\pbar,\cbar)$ rather than a triple $(\pbar,\Ubar,\cbar)$. This is inconsequential,
and comes from the fact that our generic discretisation below requires only approximations 
of $\pbar$ and $\cbar$, from which we obtain an approximation $\Ubar$ by the discrete
gradient operator. Note however that many finite element and finite volume schemes
for \eqref{eq:model} approximate $\Ubar$ directly, since numerical differentiation
of the approximation to $\pbar$ can lead to rather poor approximations of the Darcy velocity \cite{darlow84,erw84}.

\section{Discrete problem and main result} \label{sec:discprob}

The basic objects of study in our GDM discretisation of the miscible displacement 
problem are the {gradient discretisation}, which must satisfy the four properties of {coercivity}, {consistency},
{limit-conformity} and {compactness} in order to guarantee convergence
of the associated {gradient scheme}.

\subsection{Gradient discretisation} \label{ssec:graddisc}

\begin{definition}[Gradient discretisation] \label{def:gradient_disc}
A {gradient discretisation} for the miscible displacement problem is a family
$\disc=(\unknowns,\PiD,\gradD,\interpD,(t^{(n)})_{n=0,\ldots,N})$, where
\begin{enumerate}[(i)]
\item The set $\unknowns$ of discrete unknowns is a finite-dimensional vector space
over $\RR$.
\item $\PiD:\unknowns\to L^2(\O)$ is a linear mapping, called the {function reconstruction operator}.
\item $\gradD:\unknowns\to L^2(\O)^d$ is a linear mapping called the
{gradient reconstruction operator},
and must be chosen so that $\norm{\,\cdot\,}_{\disc,\ellip}$ defined by
\[
\norm{w}_{\disc,\ellip} =
\left[ \norm{\gradD w}^2_{L^2(\O)^d} + \left(\int_\O \PiD w(x)\ud
x\right)^2\right]^{1/2}
\]
is a norm on $\unknowns$. We also define
\[
\norm{w}_{\disc,\para} =
\left[\norm{\PiD w}^2_{L^2(\O)} + \norm{\gradD w}^2_{L^2(\O)^d}\right]^{1/2}.
\]
\item $\interpD:L^2(\O)\to\unknowns$ is a linear interpolation operator.
\item $0=t^{(0)}<t^{(1)}<\ldots<t^{(N)}=T$. 
\end{enumerate}
\end{definition}
Note that besides the use of the $\norm{\,\cdot\,}_{\disc,\para}$ norm in (iii), this notion of gradient discretisation is
identical to the space-time gradient discretisation for parabolic Neumann problems
presented in \cite[Definitions 3.1 and 4.1]{gdmbook}.
  
The subscripts `$\ellip$' and `$\para$' denote {elliptic} and {parabolic}, respectively,
and reflect the fact that we use $\norm{\,\cdot\,}_{\disc,\ellip}$ to estimate the discrete pressure
and $\norm{\,\cdot\,}_{\disc,\para}$ to estimate the discrete concentration. 
The manner in which we incorporate the zero-mean value
of the pressure into the scheme (see \eqref{scheme_p}) necessitates the use of the 
$\norm{\,\cdot\,}_{\disc,\ellip}$ norm and its associated Poincar\'e inequality
\eqref{eq:poincare} to obtain a spatial $L^2$ estimate on the discrete pressure. 
The presence of the time derivative in the parabolic equation affords the same estimate
without the use of a Poincar\'e inequality, hence the $\norm{\,\cdot\,}_{\disc,\para}$ norm.

Next we introduce some notation. Consider $n\in\{0,\ldots,N-1\}$, 
$t\in(t^{(n)},t^{(n+1)}]$ and $w=(w^{(n)})_{n=0,\ldots,N-1}\in \unknowns^N$. 
Set $\deltat=t^{(n+1)}-t^{(n)}$, $\deltatD=\max_{n=0,\ldots,N-1}\deltat$ and define
the {discrete time derivative}
\begin{equation*}
\deltaD w(t):=
\deltaD^{(n+\frac{1}{2})}w
:= \PiD\frac{w^{(n+1)}-w^{(n)}}{\deltat}\in L^2(\Omega).
\end{equation*}
We use the notation $\PiDe$, $\PiD$, $\gradD$  (with `exp' for explicit) for functions 
dependent on both space and time as follows. For almost-every $x\in\O$, for all 
$n\in\{0,\ldots,N-1\}$ and all $t\in(t^{(n)}, t^{(n+1)}]$, we set 
\begin{equation}\label{imp.ex}
	\begin{gathered}
	\PiD w(x,0)=\PiDe w(x,0)=\PiD w^{(0)}(x), \quad 
	\PiDe w(x,t)=\PiD w^{(n)}(x),\\
	\PiD w(x,t)=\PiD w^{(n+1)}(x), \mbox{ and }
	\gradD w(x,t)=\gradD w^{(n+1)}(x).
	\end{gathered}
\end{equation}
Normally only one choice of evaluation is required, either implicit ($\PiD w(x,t)=\PiD w^{(n+1)}(x)$)
or explicit ($\PiD w(x,t)=\PiD w^{(n)}(x))$. Again, the coupled nature of the miscible
displacement problem appears to necessitate the use of both. We discuss this further below.

Before introducing the scheme, one final remark is necessary. The solution $\cbar$
to \eqref{eq:parabolic} satisfies a maximum principle: $0\leq\cbar\leq 1$. We cannot in general
prove such an {a priori} estimate on the numerical approximation of $\cbar$ in the GDM
framework, except in very specific cases such as the two-point finite
volume scheme on simple meshes and with a diffusion-dispersion tensor that does
not depend on $\U$ \cite{review,tp}. 
Indeed, in Section \ref{sec:experiments} we present numerical results on coarse meshes 
that exhibit values of the concentration outside the unit interval. 
To establish the basic discrete energy estimates of Lemma \ref{lem:parabolicest} 
on the numerical solution, it is therefore necessary for us to stabilise the scheme 
by means of a truncation operator applied to $\cbar$ in the convection term. 
To this end, for $s\in\RR$ define the truncation onto the unit interval by
\begin{equation} \label{eq:trunc}
\trunc(s) := \max(0, \min(s,1)).
\end{equation} 
The scheme for \eqref{eq:model} is then obtained by replacing the continuous spaces
and operators in \eqref{eq:weaksol} with their discrete analogues.

\begin{definition}[Gradient scheme for \eqref{eq:model}] \label{def:scheme}
Find sequences $p=\left(p^{(n)}\right)_{n=1,\ldots,N}\in\unknowns^N$ and
$c=\left(c^{(n)}\right)_{n=0,\ldots,N}\in\unknowns^{N+1}$ such that
$c^{(0)}=\cI_\disc c_0$ and for all $n=0,\ldots,N-1$,
\begin{subequations} \label{scheme}
\begin{equation} \label{scheme_p}
	\left.
	\begin{gathered}
	\Udisc^{(n+1)}(x) = -\A\left(\PiD c^{(n)}(x)\right)\gradD p^{(n+1)}(x),\\
	-\int_\O\Udisc^{(n+1)}(x)\cdot\gradD w(x) \ud x 
	+ \left( \int_\O \PiD p^{(n+1)}(x)\ud x\right)\left( \int_\O \PiD w(x)\ud x\right)\\
	= \frac{1}{\deltat}\int_{t^{(n)}}^{t^{(n+1)}}\int_\O (q^I - q^P)(x,t)\PiD w(x)\ud x\ud t\quad\forall w\in\unknowns,
	\end{gathered} \right\}
\end{equation}
\begin{equation} \label{scheme_c}
	\begin{gathered}
	\int_\O\left(\Phi(x)\deltaD^{(n+\frac{1}{2})}c(x)\PiD w(x) 
	+ \DD(\Udisc^{(n+1)}(x))\gradD c^{(n+1)}(x)\cdot\gradD w(x)\right)\ud x\\
	- \int_\O \trunc\left(\PiD c^{(n+1)}(x)\right)\Udisc^{(n+1)}(x)\cdot\gradD w(x)\ud x \\
	=  \frac{1}{\deltat}\int_{t^{(n)}}^{t^{(n+1)}}\int_\O
	\left(\hat{c}(x)q^I(x,t) - \PiD c^{(n+1)}(x)q^P(x,t)\right)\PiD w(x)\ud x\ud t\quad\forall w\in\unknowns.
	\end{gathered} 
\end{equation}
\end{subequations}
\end{definition}

Note the choice of $\PiD c^{(n)}$ (i.e. explicit in time) in the definition of the
discrete Darcy velocity. This choice follows Chainais-Hillairet--Droniou \cite{cd07}
and decouples the scheme for \eqref{eq:elliptic} from the scheme for \eqref{eq:parabolic}.
This facilitates the proof of existence of solutions to \eqref{scheme}, but is by no means
necessary. It does, however, reflect a structural choice common to many schemes in
the literature on the miscible displacement problem.
The second integral term in \eqref{scheme_p} accounts for the zero mean value of $\pbar$.

In order to prove the main result of this article, our gradient discretisations must
satisfy properties that mimic as much as possible the properties of the continuous operators. 
The first of these, {coercivity}, imposes a restriction on the $L^2$ interaction
between $\PiD$ and $\gradD$. In particular, it gives us a {discrete Poincar\'e 
inequality} and ensures stability of the underlying method.
\begin{definition}[Coercivity] \label{def:coercivity}
Let 
\begin{equation*}
C_\disc =  \max_{w\in\unknowns\setminus\{0\}}\frac{\norm{\PiD
w}_{L^2(\O)}}{\norm{w}_{\disc,\ellip}}.
\end{equation*}
A sequence $(\disc_m)_{m\in\NN}$ of gradient discretisations is {coercive} if
there exists $C_P\in\RR^+$ such that for all $m\in\NN$, $C_{\disc_m}\leq C_P$.
\end{definition}
The corresponding Poincar\'e inequality for the $\norm{\,\cdot\,}_{\disc,\ellip}$ norm 
is then
\begin{equation} \label{eq:poincare}
\norm{\PiD w}_{L^2(\O)} 
\leq C_\disc\norm{w}_{\disc,\ellip}. 
\end{equation}
The next property, consistency, ensures that we can recover our (spatial)
solution space $H^1(\O)$ to arbitrary $L^2$ precision using reconstructed functions
and their gradients from $\unknowns$. In this sense, it shows that $\unknowns$ is a
`good sample' of $H^1(\O)$. This property also ensures the recovery of the initial condition,
and the convergence to 0 of the time steps.
\begin{definition}[Consistency] \label{def:consistency}
For $\varphi\in H^1(\O)$, define the map $S_\disc:H^1(\O)\to[0,\infty)$ by
\begin{equation*}
S_\disc(\varphi)=\min_{w\in\unknowns}\left( \norm{\PiD w - \varphi}_{L^2(\O)}
+ \norm{\gradD w - \nabla\varphi}_{L^{2}(\O)^d}\right).
\end{equation*}
A sequence $(\disc_m)_{m\in\NN}$ of gradient discretisations is {consistent} if,
as $m\to\infty$,
\begin{itemize}
\item for all $\varphi\in H^1(\O)$, $S_{\disc_m}(\varphi)\to0$,
\item for all $\varphi\in L^2(\O)$, $\PiDm \interpDm \varphi\to\varphi$ strongly in $L^2(\O)$, and
\item $\deltatDm \to 0$.
\end{itemize}
\end{definition}
Elements of the continuous spaces in our problem satisfy a {divergence} formula.
The quantity $W_\disc$ defined below measures the error introduced into this formula by
the discretisation method. For convergence of the schemes, we require that the formula
is satisfied asymptotically. 
\begin{definition}[Limit-conformity] \label{def:limconform}
Let $W = \left\{ \bvarphi\in C^\infty(\ol{\O})^d : \bvarphi\cdot\normvec=0\mbox{ on }\partial\O\right\}$.
For $\bvarphi\in W$, define $W_\disc:W\to[0,\infty)$ by
\begin{equation*}
W_\disc(\bvarphi)=\max_{w\in\unknowns\setminus\{0\}}\frac{1}{\norm{w}_{\disc,\ellip}}
\left| \int_\O \left(\gradD w(x)\cdot\bvarphi(x) + \PiD w(x)\dive\bvarphi(x)\ud x\right)\right|.
\end{equation*}
A sequence $(\disc_m)_{m\in\NN}$ of gradient discretisations is {limit-conforming} if
for all $\bvarphi\in W$, $W_{\disc_m}(\bvarphi)\to0$ as $m\to\infty$.
This implies also the same property with
$\norm{w}_{\disc,\para}$ instead of $\norm{w}_{\disc,\ellip}$.
\end{definition}
The condition of vanishing normal trace in $W$ is imposed by the homogeneous Neumann boundary
conditions of the miscible displacement model \cite{gdmbook}.

Our final requirement on the gradient discretisations is that the operators $\PiD$
and $\gradD$ afford us a compactness property. 
\begin{definition}[Compactness] \label{def:compact}
A sequence $(\disc_m)_{m\in\NN}$ of gradient discretisations is
{compact} if for all sequences $(u_m)_{m\in\NN}$ with
$u_m\in\unknownsm$ such that $(\norm{u_m}_{\disc,\para})_{m\in\NN}$ or
$(\norm{u_m}_{\disc,\ellip})_{m\in\NN}$ is bounded, the sequence
$\left(\PiDm u_m\right)_{m\in\NN}$
has a subsequence converging in $L^2(\O)$.
\end{definition}
With these definitions in place, the main result of this article is the following theorem.

\begin{theorem}\label{th:main}
Assume \eqref{assumptions} and let $(\discm)_{m\in\NN}$ be a sequence of gradient
discretisations that is coercive, consistent, limit-conforming and compact. For $m\in\NN$,
let $(\pDm, \cDm)$ be a solution to the gradient scheme \eqref{scheme}
with $\disc=\discm$. Then there
exists a weak solution $(\pbar,\cbar)$ of \eqref{eq:model} in the sense of 
Definition \ref{def:weaksol} and a subsequence of gradient discretisations, again denoted
$(\discm)_{m\in\NN}$, such that as $m\to\infty$,
\begin{enumerate}[(i)]
\item $\PiDm\pDm \to \pbar$ strongly in $\Leb{p}{2}$ for every $p<\infty$ and weakly-$\ast$
in $\Leb{\infty}{2}$;
\item $\gradDm\pDm \to \nabla\pbar$ and 
$\UDm:=-\A\left(\PiDme\cDm\right)\gradDm\pDm\to\Ubar$,
both strongly in $\Leb[d]{2}{2}$;
\item $\PiDm\cDm\to\cbar$ and $\PiDme\cDm\to\cbar$ strongly in $\Leb{\infty}{2}$;
\item $\gradDm\cDm\weakto\nabla\cbar$ weakly in $\Leb[d]{2}{2}$.
\end{enumerate}
\end{theorem}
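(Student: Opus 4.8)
The plan is to follow the classical compactness strategy for nonlinear coupled problems, in the spirit of \cite{cd07} and \cite{tp}, adapted to the GDM framework. First I would establish the \emph{a priori} estimates. Testing \eqref{scheme_p} with $w=p^{(n+1)}$ and using the coercivity of $\A$ (from \eqref{hyp:A}), the Cauchy--Schwarz and Young inequalities, and the Poincar\'e inequality \eqref{eq:poincare}, gives a uniform bound on $\norm{p^{(n+1)}}_{\disc,\ellip}$ and hence on $\norm{\gradDm\pDm}_{\Leb[d]{\infty}{2}}$ and on $\UDm$ in the same space. Then, testing \eqref{scheme_c} with $w=\deltat\,c^{(n+1)}$, summing over $n$, and using the coercivity of $\DD$, the bound $0\le\trunc(\cdot)\le1$, the $L^\infty$ bounds on the data and the already-established bound on $\UDm$, one controls $\frac12\int_\O\Phi\,\PiDm c^{(N)}\,{}^2$, the discrete parabolic dissipation $\norm{\gradDm\cDm}_{\Leb[d]{2}{2}}$, and (via the $\norm{\,\cdot\,}_{\disc,\para}$-coercivity) $\norm{\PiDm\cDm}_{\Leb{\infty}{2}}$. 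These are exactly Lemma~\ref{lem:parabolicest}. Next, the discrete time-derivative estimate Lemma~\ref{lem:dtc}: bounding $\deltaDm\cDm$ in $L^2(0,T;(H^1(\O))')$ (or a suitable dual discrete norm) by testing \eqref{scheme_c} against an arbitrary discrete interpolant of an $H^1$ function and using the consistency and limit-conformity to pass the reconstruction operators onto the test function.

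Second, I would extract weakly/weakly-$\ast$ convergent subsequences: $\PiDm\pDm\weakto\pbar$ in $\Leb{\infty}{2}$, $\gradDm\pDm\weakto\mathsf G_p$ and $\gradDm\cDm\weakto\mathsf G_c$ in $\Leb[d]{2}{2}$, $\PiDm\cDm\weakto\cbar$ and $\PiDme\cDm\weakto\cbar^{\mathrm{ex}}$. The compactness hypothesis (Definition~\ref{def:compact}) upgrades the pressure and concentration function-reconstructions to strong $L^2(\O)$ convergence at fixed time; combined with the time-derivative estimate and a discrete Aubin--Simon argument (as in \cite{degh13}), this yields strong convergence of $\PiDm\cDm$ in $\Leb{2}{2}$, and then, using the explicit/implicit comparison in \eqref{imp.ex} together with the time-step consistency $\deltatDm\to0$, one shows $\PiDme\cDm$ has the same limit $\cbar=\cbar^{\mathrm{ex}}$. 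A GDM-standard argument (limit-conformity plus consistency) identifies $\mathsf G_p=\nabla\pbar$ and $\mathsf G_c=\nabla\cbar$ with $\cbar\in L^2(0,T;H^1(\O))$, and identifies the limit of the time derivative as $\Phi\partial_t\cbar\in L^2(0,T;(H^1(\O))')$ with $\cbar(\cdot,0)=c_0$. The truncation $\trunc(\PiDm c^{(n+1)})$ converges strongly to $\trunc(\cbar)=\cbar$ a.e.\ (since $0\le\cbar\le1$, which itself follows from the limit energy identity or from passing the discrete maximum-principle-type bounds to the limit), which is where the boundedness assumption \eqref{hyp:D} on $\DD$ is used (Remark~\ref{rem:DDbounded}) to drop the truncation from the limit convection term.

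Third, pass to the limit in the scheme. For the elliptic equation this is direct once $\A(\PiDme\cDm)\to\A(\cbar)$ strongly (by continuity of $\A$ and strong convergence of $\PiDme\cDm$, dominated by $\Lambda_\A$) — one gets $\Ubar=-\A(\cbar)\nabla\pbar$ and the weak Darcy/divergence identity for all $\varphi\in L^1(0,T;H^1(\O))$. The strong convergence of $\gradDm\pDm$ and of $\UDm$ claimed in (ii) is then recovered by the usual energy-convergence trick: write $\int\A(\PiDme\cDm)\gradDm\pDm\cdot\gradDm\pDm$, pass to the limit using the scheme with $w=\pDm$ on the left and the limit source times $\PiDm\pDm\to\pbar$ on the right, compare with the continuous energy $\int\A(\cbar)\nabla\pbar\cdot\nabla\pbar$, and conclude $L^2$ convergence of $\Dsq{}{}$-type square roots, hence of the gradients. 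For the parabolic equation, passing to the limit in the linear diffusion term uses $\DD(\UDm)\to\DD(\Ubar)$ strongly (continuity of $\DD$, strong convergence of $\UDm$ just obtained, and the bound $\Lambda_{\DD}$) times the weak convergence of $\gradDm\cDm$; the convection term uses strong convergence of $\trunc(\PiDm\cDm)$ and of $\UDm$ against the weak convergence of $\gradDm w$-reconstructions; the well terms use the $L^\infty$-weak-$\ast$/strong pairings. This yields \eqref{eq:weaksol}.

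Finally, the $\Leb{\infty}{2}$ (uniform-in-time) strong convergence of $\PiDm\cDm$ and $\PiDme\cDm$ claimed in (iii) — which is the genuinely novel part — is obtained by the technique of \cite{DE15}: one derives a discrete analogue of the energy identity \eqref{eq:energyid}, shows that $\limsup_m$ of the discrete parabolic dissipation is controlled by the continuous one (using lower semicontinuity the other way requires care), and deduces that $\int_\O\Phi\,(\PiDm\cDm(\cdot,t))^2$ converges to $\int_\O\Phi\,\cbar(\cdot,t)^2$ uniformly in $t$; combined with the already-known weak-in-space convergence at each time and an equicontinuity-in-time argument from the time-derivative estimate, this gives strong $L^2(\O)$ convergence uniformly in $t\in[0,T]$. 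I expect the main obstacle to be exactly this last step: reconciling the discrete energy balance with the continuous one requires simultaneously (a) the ability to use $\cbar$ itself as a test function in the limit equation — which is why Definition~\ref{def:weaksol} is crafted so that $\cbar$ has the regularity of the admissible test functions $\varphi$ — and (b) a uniform (in $m$) handle on the oscillation of $t\mapsto\int_\O\Phi(\PiDm\cDm)^2$ so that pointwise-in-time convergence can be promoted to uniform convergence; getting the truncation to disappear cleanly in this energy argument, using \eqref{hyp:D}, is the delicate accompanying technical point.
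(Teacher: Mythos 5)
Your overall architecture matches the paper's proof closely: a priori estimates (Lemmas \ref{lem:ellipticest}, \ref{lem:parabolicest}, \ref{lem:dtc}), discrete Aubin--Simon compactness for the concentration, weak--strong passage to the limit in both equations, the energy-convergence trick for the strong $\Leb[d]{2}{2}$ convergence of $\gradDm\pDm$ and $\UDm$, and the energy-comparison technique of \cite{DE15} (limsup of the discrete energy versus the identity \eqref{eq:energyid}, combined with uniform-in-time weak-in-space convergence) for item (iii). So the route is essentially the paper's.

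There is, however, one concrete flaw: your justification of $0\leq\cbar\leq1$, which is indispensable both for removing the truncation $\trunc$ from the limit convection term and for $(\pbar,\cbar)$ to be a weak solution in the sense of Definition \ref{def:weaksol}. You propose to obtain it either ``from the limit energy identity'' or ``from passing the discrete maximum-principle-type bounds to the limit''. Neither works: the energy identity gives only integral (not pointwise) information, and there is \emph{no} discrete maximum principle available in the GDM framework --- this is precisely why the truncation was inserted into the scheme in the first place, and the numerical experiments in Section \ref{sec:experiments} exhibit discrete concentrations outside $[0,1]$ on coarse meshes. The correct argument, used in the paper, is purely at the continuous level: the limit $\cbar$ satisfies the transport equation with $\trunc(\cbar)$ in the convection term, and one applies the Fabrie--Gallou\"et argument \cite[Proposition 4.1]{fg00}, testing with the negative part of $\cbar$ and the positive part of $\cbar-1$; it is the admissibility of these $H^1$ test functions that requires the boundedness of $\DD$ in \eqref{hyp:D} (Remark \ref{rem:DDbounded}), a point you correctly locate but attach to the wrong mechanism. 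A second, more minor, looseness: the strong $\Leb{p}{2}$ convergence of $\PiDm\pDm$ in item (i) does not follow from the compactness hypothesis ``at fixed time'' (there is no time-derivative estimate for the pressure); in the paper it is deduced \emph{after} the energy trick, by comparing $\pDm$ with an interpolant of $\pbar$ through the coercivity of $\norm{\cdot}_{\disc,\ellip}$ and the vanishing of the mean-value terms, and you should make that step explicit.
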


\subsection{Two examples}\label{ssec:examples}
We present here two concrete examples that fit into the GDM framework.

\subsubsection{Scheme A: finite-difference scheme}

Scheme A, defined only on rectangular meshes, leads to a five-point finite volume scheme in the case of isotropic diffusion problems
on rectangular meshes. For the sake of simplicity, we describe this scheme in the case where $\Omega = (0,L)^2$, with $L>0$.
For $N\in\NN$, set $h = L/N$, and for $i,j=1,\ldots,N$, set $K_{ij} = ((i-1)h,ih)\times((j-1)h,jh)$. We then set
\begin{enumerate}[(i)]
\item $X_\disc = \{ (u_{ij})_{i,j=1,\ldots,N}\in \RR^{N^2}\}$;
\item For every $u\in X_\disc$, $i,j=1,\ldots,N$ and for almost-every $x\in K_{ij}$, 
$\PiD u(x) = u_{ij}$ (piecewise constant reconstruction in all $K_{ij}$);
\item For every $u\in X_\disc$, for $i,j=1,\ldots,N$, set $u_{N+1,j} = u_{Nj}$, 
$u_{i,N+1} = u_{iN}$, $u_{0j} = u_{1j}$, and $u_{i0} = u_{i1}$. For $i,j=1,\ldots,N$, we
then define the piecewise constant reconstruction of the gradient by
	\begin{enumerate}
	\item  $\gradD u(x)=(\frac{1}{h}(u_{i+1,j}-u_{ij}), \frac{1}{h}(u_{i,j+1}-u_{ij}))$ on $((i-\frac{1}{2})h,ih)\times((j-\frac{1}{2})h,jh)$,
	\item  $\gradD u(x)=(\frac{1}{h}(u_{i+1,j}-u_{ij}), \frac{1}{h}(u_{i,j}-u_{i,j-1}))$ on $((i-\frac{1}{2})h,ih)\times((j-1)h,(j-\frac{1}{2})h)$,
	\item  $\gradD u(x)=(\frac{1}{h}(u_{ij}-u_{i-1,j}), \frac{1}{h}(u_{i,j+1}-u_{ij}))$ on $((i-1)h,(i-\frac{1}{2})h)\times((j-\frac{1}{2})h,jh)$,
	\item  $\gradD u(x)=(\frac{1}{h}(u_{ij}-u_{i-1,j}), \frac{1}{h}(u_{i,j}-u_{i,j-1}))$ on $((i-1)h,(i-\frac{1}{2})h)\times((j-1)h,(j-\frac{1}{2})h)$.
 	\end{enumerate}
Figure \ref{fig:discgrad} provides a visualisation of this construction. 
\end{enumerate}
For proofs that this scheme is coercive, consistent, limit conforming and compact 
in the case of homogeneous Dirichlet boundary conditions, see Droniou, Eymard and Feron \cite{def15}. 
The adaptation of their arguments to homogeneous Neumann boundary conditions is straightforward.
\begin{figure}[]
\begin{center}
 \input{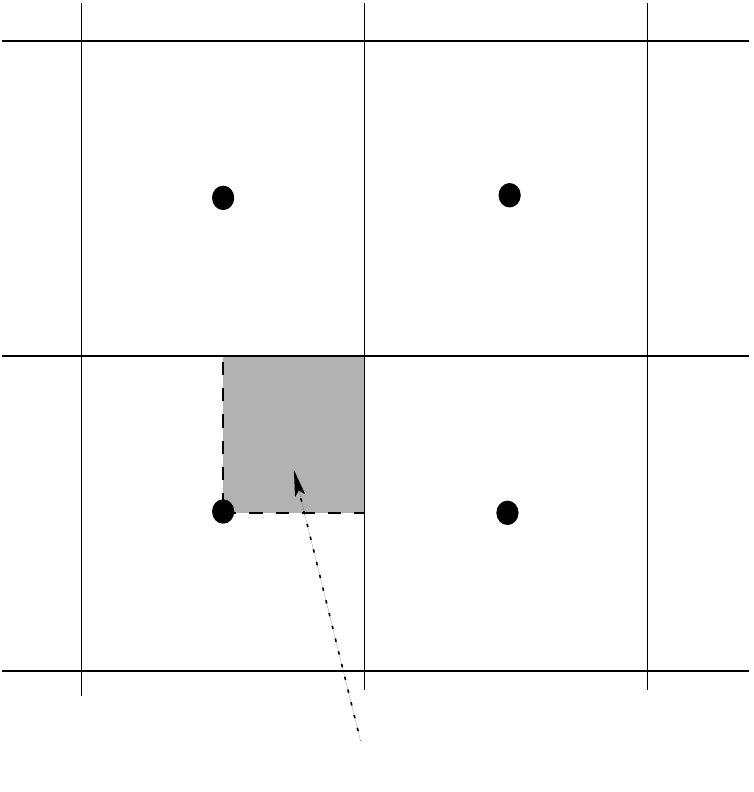_t}
\caption{Discrete gradient for Scheme A}
\label{fig:discgrad}
\end{center}
\end{figure}
 
\subsubsection{Scheme B: mass-lumped $P^1$ conforming scheme}

Scheme B, defined only for conforming triangular meshes, is the standard $P^1$ conforming finite element scheme 
with mass lumping on the dual mesh obtained by joining the center of gravity of the 
triangles with the middle of the edges \cite[Section 8.4]{gdmbook} (dual meshes can also
easily be defined in 3D, so this scheme is actually also defined for 3D simplicial meshes). Denote by $\mathcal{V}$ the set of the 
vertices of the mesh, and for $i\in \mathcal{V}$ define $K_i$ as the dual cell 
around the vertex $i$. We then set
\begin{enumerate}[(i)]
\item $X_\disc = \{ u=(u_{i})_{i\in \mathcal{V}}\in \RR^{\mathcal{V}}\}$;
\item For every $u\in X_\disc$, $i\in \mathcal{V}$ and for almost-every $x\in K_{i}$, 
$\PiD u(x) = u_{i}$ (piecewise constant reconstruction in all $K_{i}$);
\item For every $u\in X_\disc$, define $\gradD u$ as the gradient of the conforming 
piecewise affine function reconstructed in the triangles from the values at the 
vertices of the triangles.
\end{enumerate}
For proofs that this scheme satisfies the four properties above in the case of homogeneous Dirichlet
boundary conditions, see Droniou, Eymard and Herbin \cite{deh16generic}. Their adaptation to 
homogeneous Neumann boundary conditions is once again straightforward (see also
\cite[Sections 8.3 and 8.4]{gdmbook}).

We revisit these schemes in Section \ref{sec:experiments} to present the results
of numerical experiments.

\section{Estimates} \label{sec:est}
Thanks to the gradient discretisation framework, the following elliptic estimates are 
very similar to their continuous analogues.
To reiterate the conventions outlined in the introduction, for the constants appearing
in estimates we highlight only their relevant dependencies. In the current setting,
this amounts to highlighting scheme-dependent quantities 
and demonstrating that the additional regularity assumed on the sources 
really is primarily for convenience.

\begin{lemma}\label{lem:ellipticest}
Assume \eqref{assumptions} and let $\disc$ be a gradient
discretisation. Let $(p,c)$ be a solution to the gradient scheme for 
\eqref{eq:model}. Then there exists
$C$ depending only on 
$C_P\geq C_\disc$ and $\norm{q^I - q^P}_{\Leb{\infty}{2}}$ such that
\begin{equation}\label{est:elliptic}	
\norm{\PiD p}_{\Leb{\infty}{2}}\leq C, \quad
\norm{\gradD p}_{\Leb[d]{\infty}{2}}\leq C \quad\mbox{and}\quad
\norm{\Udisc}_{\Leb[d]{\infty}{2}}\leq C.
\end{equation}
As a consequence, for a given $n\in\{0,\ldots,N-1\}$, there exists one and only one 
$p^{(n+1)}\in\unknowns$ such that \eqref{scheme_p} holds.
\end{lemma}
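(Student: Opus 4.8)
The plan is to derive the estimates \eqref{est:elliptic} by the standard energy argument for the discrete elliptic problem, mimicking the continuous computation for \eqref{eq:elliptic}, and then to obtain existence and uniqueness of $p^{(n+1)}$ from those estimates together with the linearity of \eqref{scheme_p}. Fix $n\in\{0,\ldots,N-1\}$ and abbreviate $\an(x)=\A(\PiD c^{(n)}(x))$, which by \eqref{hyp:A} satisfies $\an(x)\xi\cdot\xi\geq\alpha_\A|\xi|^2$ and $|\an(x)|\leq\Lambda_\A$ for a.e.\ $x$ and all $\xi\in\RR^d$, independently of $c^{(n)}$. First I would take $w=p^{(n+1)}$ as test function in \eqref{scheme_p}; using $\Udisc^{(n+1)}=-\an\gradD p^{(n+1)}$ this yields
\[
\int_\O \an(x)\gradD p^{(n+1)}(x)\cdot\gradD p^{(n+1)}(x)\ud x
+\left(\int_\O\PiD p^{(n+1)}(x)\ud x\right)^2
=\frac{1}{\deltat}\int_{t^{(n)}}^{t^{(n+1)}}\int_\O(q^I-q^P)(x,t)\PiD p^{(n+1)}(x)\ud x\ud t.
\]
The left-hand side is bounded below by $\min(\alpha_\A,1)\norm{p^{(n+1)}}_{\disc,\ellip}^2$, and the right-hand side is bounded above, by Cauchy--Schwarz in space and the bound on the time-average of $q^I-q^P$, by $\norm{q^I-q^P}_{\Leb{\infty}{2}}\norm{\PiD p^{(n+1)}}_{L^2(\O)}$, which by the Poincar\'e inequality \eqref{eq:poincare} is at most $C_\disc\norm{q^I-q^P}_{\Leb{\infty}{2}}\norm{p^{(n+1)}}_{\disc,\ellip}$. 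Dividing through gives $\norm{p^{(n+1)}}_{\disc,\ellip}\leq C$ with $C$ depending only on $C_P\geq C_\disc$ and $\norm{q^I-q^P}_{\Leb{\infty}{2}}$; hence $\norm{\gradD p^{(n+1)}}_{L^2(\O)^d}\leq C$, $\norm{\PiD p^{(n+1)}}_{L^2(\O)}\leq C_\disc\norm{p^{(n+1)}}_{\disc,\ellip}\leq C$ again by \eqref{eq:poincare}, and $\norm{\Udisc^{(n+1)}}_{L^2(\O)^d}\leq\Lambda_\A\norm{\gradD p^{(n+1)}}_{L^2(\O)^d}\leq C$. Since these bounds are uniform in $n$ and the $\Leb{\infty}{2}$ norm of a piecewise-in-time function is the maximum over $n$ of the spatial $L^2$ norms of its time-slices, \eqref{est:elliptic} follows.

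For the existence and uniqueness statement, I would observe that for fixed $n$ the map $w\mapsto p^{(n+1)}$ is governed by a square linear system on the finite-dimensional space $\unknowns$: the bilinear form
\[
a(v,w)=\int_\O\an(x)\gradD v(x)\cdot\gradD w(x)\ud x+\left(\int_\O\PiD v(x)\ud x\right)\left(\int_\O\PiD w(x)\ud x\right)
\]
is the left-hand side of \eqref{scheme_p} and is continuous on $\unknowns\times\unknowns$, while the right-hand side is a continuous linear form on $\unknowns$. The energy computation above shows $a(w,w)\geq\min(\alpha_\A,1)\norm{w}_{\disc,\ellip}^2$, so $a$ is coercive on $\unknowns$ for the norm $\norm{\,\cdot\,}_{\disc,\ellip}$ (which is a genuine norm by Definition~\ref{def:gradient_disc}(iii)); in particular the associated linear operator on the finite-dimensional space $\unknowns$ is injective, hence bijective, and \eqref{scheme_p} has exactly one solution $p^{(n+1)}\in\unknowns$. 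Equivalently one may invoke Lax--Milgram directly on the finite-dimensional inner-product space $(\unknowns,\langle\cdot,\cdot\rangle_{\disc,\ellip})$.

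I do not expect a serious obstacle here: the only points that require a little care are checking that the coercivity constant $\alpha_\A$ is independent of the explicit data $\PiD c^{(n)}$ (it is, since \eqref{hyp:A} is uniform in $s$), and that $\norm{\,\cdot\,}_{\disc,\ellip}$ is indeed a norm so that coercivity yields injectivity — this is exactly the requirement placed on $\gradD$ in the definition of a gradient discretisation. Everything else is the routine Cauchy--Schwarz/Poincar\'e chain, and the passage from per-time-step bounds to the $\Leb{\infty}{2}$ bounds is immediate from the piecewise-constant-in-time structure \eqref{imp.ex}.
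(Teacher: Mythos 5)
Your proposal is correct and follows essentially the same route as the paper: test with $w=p^{(n+1)}$, use the coercivity of $\A$ together with Cauchy--Schwarz and the discrete Poincar\'e inequality \eqref{eq:poincare} to bound $\norm{p^{(n+1)}}_{\disc,\ellip}$, deduce the three bounds in \eqref{est:elliptic} uniformly in $n$, and obtain existence and uniqueness from the fact that the same coercivity computation shows the square linear system \eqref{scheme_p} has trivial kernel (your Lax--Milgram phrasing is equivalent to the paper's kernel argument in finite dimension). No gaps.
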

\begin{proof}
Let $n\in\{0,\ldots,N-1\}$ and take $w=p^{(n+1)}$ in \eqref{scheme_p} to obtain
\begin{multline*}
\int_\O \A\left(\PiD c^{(n)}(x)\right)\gradD p^{(n+1)}(x)\cdot\gradD p^{(n+1)}(x)\ud x
+ \left( \int_\O \PiD p^{(n+1)}(x)\ud x\right)^2 \\
= \frac{1}{\deltat}\int_{t^{(n)}}^{t^{(n+1)}}\int_\O \left(q^I - q^P\right)(x,t)\PiD p^{(n+1)}(x)\ud x\ud t.
\end{multline*}
Next, apply the coercivity \eqref{hyp:A} of $\A$ and H\"older's inequality:
\begin{equation*}
\alpha_\A\int_\O \left| \gradD p^{(n+1)}(x)\right|^2\ud x
+ \left(\int_\O \PiD p^{(n+1)}(x)\ud x\right)^2
\leq \norm{q^I - q^P}_{\Leb{\infty}{2}}\norm{\PiD p^{(n+1)}}_{L^{2}(\O)}.
\end{equation*}
Then using the discrete Poincar\'e inequality \eqref{eq:poincare}, we have
\begin{equation} \label{eq:zerokernel}
\int_\O \left| \gradD p^{(n+1)}(x)\right|^2\ud x
+ \left(\int_\O \PiD p^{(n+1)}(x)\ud x\right)^2
\leq \left(\frac{C_\disc}{\min(1,\alpha_\A)}\right)^2\norm{q^I - q^P}_{\Leb{\infty}{2}}^2.
\end{equation}
This yields the estimate on $\gradD p$ and, thanks again to \eqref{eq:poincare},
the estimate on $\PiD p$. The estimate on $\Udisc$ follows from the estimate on $\gradD p$
and the bound on $\A$.

For a given $n\in\{0,\ldots,N-1\}$, \eqref{scheme_p} is a square linear system. 
If the right-hand side of \eqref{scheme_p} vanishes, \eqref{eq:zerokernel} shows 
that the system has a trivial kernel.
The existence and uniqueness of the solution $p^{(n+1)}\in\unknowns$ to \eqref{scheme_p}
then follows immediately.
\end{proof}

The following discrete energy estimates are also a reasonably straightforward translation of the
continous estimates \cite[Proposition 3.1]{dt14}. Note however that unlike the continuous estimates, due to
our use of the truncation operator we cannot follow \cite{fg00} by using the pressure
equation to transform the convection term.
\begin{lemma}\label{lem:parabolicest}
Assume \eqref{assumptions} and let $\disc$ be a gradient
discretisation. Let $(p,c)$ be a solution to the gradient scheme for 
\eqref{eq:model}. Then there exists
$C$ depending only on 
$C_P\geq C_\disc$, $\norm{q^I}_{\Leb{\infty}{2}}$, $\norm{q^P}_{\Leb{\infty}{2}}$ and 
$C_I\geq\norm{\PiD c^{(0)}}_{L^2(\O)}$ such that
\begin{equation}\label{est:parabolic}	
\norm{\PiD c}_{\Leb{\infty}{2}}\leq C,\quad
\norm{\PiDe c}_{\Leb{\infty}{2}}\leq C\quad\mbox{and}\quad
\norm{\gradD c}_{\Leb[d]{2}{2}}\leq C.
\end{equation}	
\end{lemma}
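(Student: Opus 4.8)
The plan is to mimic the continuous energy estimate leading to \eqref{eq:energyid}, but working with the discrete scheme and being careful about the truncation in the convection term. First I would take $w = c^{(n+1)}$ in \eqref{scheme_c}, multiply by $\deltat$, and sum over $n = 0, \ldots, k-1$ for an arbitrary $k \in \{1, \ldots, N\}$. The time-derivative term, written as $\int_\O \Phi \PiD(c^{(n+1)} - c^{(n)}) \PiD c^{(n+1)}$, is handled by the elementary inequality $a(a-b) \geq \tfrac12(a^2 - b^2)$ (applied pointwise with the weight $\Phi > 0$), which after summation telescopes to bound $\tfrac12 \int_\O \Phi (\PiD c^{(k)})^2 - \tfrac12 \int_\O \Phi (\PiD c^{(0)})^2$ from above. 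The diffusion term is nonnegative by the coercivity in \eqref{hyp:D}, giving a term $\geq \alpha_\DD \sum_n \deltat \norm{\gradD c^{(n+1)}}_{L^2(\O)^d}^2$ on the left.

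Next I would move the convection and source terms to the right-hand side and estimate them. The source contribution $\int_{t^{(n)}}^{t^{(n+1)}} \int_\O (\hat c q^I - \PiD c^{(n+1)} q^P) \PiD c^{(n+1)}$ is bounded using $0 \leq \hat c \leq 1$, nonnegativity of $q^P$, and Hölder/Young: the injection part is controlled by $\norm{q^I}_{\Leb{\infty}{2}}$ times $\norm{\PiD c}_{\Leb{\infty}{2}}$ (or, better, noting $-\PiD c^{(n+1)} q^P \PiD c^{(n+1)} \leq 0$, one only needs an upper bound on the $\hat c q^I$ term). The delicate term is the truncated convection term $\int_\O \trunc(\PiD c^{(n+1)}) \Udisc^{(n+1)} \cdot \gradD c^{(n+1)}$. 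Here I would use that $|\trunc(s)| \leq 1$, so this is bounded by $\norm{\Udisc^{(n+1)}}_{L^2(\O)^d} \norm{\gradD c^{(n+1)}}_{L^2(\O)^d}$, and then absorb the gradient factor into the left-hand diffusion term via Young's inequality with a small parameter $\eta$: $\leq \tfrac{\eta}{2}\norm{\gradD c^{(n+1)}}^2 + \tfrac{1}{2\eta}\norm{\Udisc^{(n+1)}}^2$. Choosing $\eta = \alpha_\DD$ leaves half the diffusion term on the left and produces the term $\tfrac{1}{2\alpha_\DD} \sum_n \deltat \norm{\Udisc^{(n+1)}}_{L^2(\O)^d}^2$, which by Lemma \ref{lem:ellipticest} is bounded by $\tfrac{T}{2\alpha_\DD} \norm{\Udisc}_{\Leb[d]{\infty}{2}}^2 \leq C$.

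Collecting everything, for every $k$ we obtain
\begin{equation*}
\frac{\phi_\ast}{2}\norm{\PiD c^{(k)}}_{L^2(\O)}^2 + \frac{\alpha_\DD}{2}\sum_{n=0}^{k-1}\deltat\norm{\gradD c^{(n+1)}}_{L^2(\O)^d}^2 \leq C,
\end{equation*}
where $C$ depends only on the quantities listed in the statement (via $\phi_\ast$ from \eqref{hyp:porosity}, $C_I$, the source norms, and $\alpha_\DD, \Lambda_\DD, \alpha_\A, \Lambda_\A$ which are fixed data). Taking the maximum over $k$ gives the bound on $\norm{\PiD c}_{\Leb{\infty}{2}}$; the bound on $\norm{\PiDe c}_{\Leb{\infty}{2}}$ follows since its values are among those of $\PiD c$ together with $\PiD c^{(0)}$; and the bound on $\norm{\gradD c}_{\Leb[d]{2}{2}}$ follows from the summed gradient term (recalling $\gradD c(\cdot,t) = \gradD c^{(n+1)}$ on $(t^{(n)}, t^{(n+1)}]$).

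The main obstacle is the convection term. Because we cannot, as in the continuous case \cite{fg00}, use the pressure equation to rewrite $\int_\O \cbar \Ubar \cdot \nabla \cbar$ as a zeroth-order term (the truncation $\trunc(\PiD c^{(n+1)})$ breaks the chain rule that would be needed, and anyway the discrete $\Udisc$ satisfies only a discrete divergence identity), we must absorb it directly against the diffusion. This is exactly why the uniform lower bound $\alpha_\DD > 0$ in \eqref{hyp:D} — and the uniform bound on $\Udisc$ from the elliptic estimate — are essential: the argument genuinely needs the full strength of the diffusion coercivity to dominate $\dive(\trunc(c)\U)$, and it is here that the boundedness of $\trunc$ (hence the introduction of the truncation operator into the scheme in the first place) does the crucial work of keeping the convection term linear in $\gradD c^{(n+1)}$.
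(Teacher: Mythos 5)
Your argument follows the paper's proof essentially verbatim: test with $w=c^{(n+1)}$, telescope the time-derivative term via $a(a-b)\ge\tfrac12(a^2-b^2)$, bound the diffusion term below by coercivity of $\DD$, drop the nonnegative $q^P$ contribution, bound the truncated convection term by $\norm{\Udisc^{(n+1)}}_{L^2(\O)^d}\norm{\gradD c^{(n+1)}}_{L^2(\O)^d}$ using $|\trunc|\le 1$, and absorb it with Young's inequality ($\eta=\alpha_\DD$) together with the elliptic estimate of Lemma \ref{lem:ellipticest}; the gradient bound and the identity $\norm{\PiDe c}_{\Leb{\infty}{2}}=\max\big(\norm{\PiD c}_{\Leb{\infty}{2}},\norm{\PiD c^{(0)}}_{L^2(\O)}\big)$ then finish exactly as in the paper.

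The one genuine gap is the step ``collecting everything, for every $k$ we obtain \dots $\le C$ with $C$ depending only on the listed data.'' As written this is circular: you bounded the injection term by $T\norm{q^I}_{\Leb{\infty}{2}}\norm{\PiD c}_{\Leb{\infty}{2}}$, which contains precisely the unknown you are trying to estimate, so at that point the right-hand side is not yet a data-dependent constant. To close the loop you must, as the paper does, apply Young's inequality once more with a suitably small parameter (e.g.\ $\varepsilon=\phi_\ast/(2T)$) to get $T\norm{q^I}_{\Leb{\infty}{2}}\norm{\PiD c}_{\Leb{\infty}{2}}\le \frac{T^2}{\phi_\ast}\norm{q^I}_{\Leb{\infty}{2}}^2+\frac{\phi_\ast}{4}\norm{\PiD c}_{\Leb{\infty}{2}}^2$, note that the resulting right-hand side is independent of $k$, take the supremum over $k$ of the left-hand side (finite, being a maximum over finitely many time levels), and absorb $\frac{\phi_\ast}{4}\norm{\PiD c}_{\Leb{\infty}{2}}^2$ into the $\frac{\phi_\ast}{2}\norm{\PiD c}_{\Leb{\infty}{2}}^2$ term; a per-step Young bound followed by a discrete Gr\"onwall inequality would also work. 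With that absorption made explicit, your proof coincides with the paper's.
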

\begin{proof}
Take $n\in\{0,\ldots, N-1\}$ and $w=c^{(n+1)}$ in \eqref{scheme_c} to obtain
\begin{multline} \label{eq:1stenergy}
\int_\O\left(\Phi(x)\deltaD^{(n+\frac{1}{2})}c(x)\PiD c^{(n+1)}(x) 
+ \DD(\Udisc^{(n+1)}(x))\gradD c^{(n+1)}(x)\cdot\gradD c^{(n+1)}(x)\right)\ud x \\
- \int_\O \trunc\left(\PiD c^{(n+1)}(x)\right)\Udisc^{(n+1)}(x)\cdot\gradD c^{(n+1)}(x)\ud x
+ \frac{1}{\deltat}\int_{t^{(n)}}^{t^{(n+1)}}\int_\O\left(\PiD c^{(n+1)}(x)\right)^2 q^P(x,t)\ud x\ud t\\
= \frac{1}{\deltat}\int_{t^{(n)}}^{t^{(n+1)}}\int_\O \hat{c}(x,t)q^I(x,t)\PiD c^{(n+1)}(x)\ud x\ud t.
\end{multline}
For $a$, $b\in\RR$, {$(a-b)a = \frac{1}{2}(a^2-b^2)+\frac{1}{2}(a-b)^2$}. Applying this to the discrete
time derivative term yields
\begin{equation}
\begin{aligned}
\left(\PiD c^{(n+1)}-\PiD c^{(n)}\right)\PiD c^{(n+1)}
={}& 
\frac{1}{2}\left((\PiD c^{(n+1)})^2 - (\PiD c^{(n)})^2\right) \\
&{+ \frac{1}{2}\left(\PiD c^{(n+1)} - \PiD c^{(n)}\right)^2}.
\end{aligned}
\label{est:parabolic_disct}\end{equation}
Then multiplying \eqref{eq:1stenergy} by $\deltat$ and summing over $n=0,\ldots,m-1$ 
for some $m\in[1,N]$, we have
\begin{multline} \label{eq:2ndenergy}
\frac{1}{2}\int_\O\Phi(x)\left[\left(\PiD c^{(m)}(x)\right)^2 -\left(\PiD c^{(0)}(x)\right)^2\right]\ud x
+ \int_0^{t^{(m)}}\int_\O \DD(\Udisc(x,t))\gradD c(x,t)\cdot\gradD c(x,t)\ud x\ud t \\
- \int_0^{t^{(m)}}\int_\O \trunc\left(\PiD c(x,t)\right)\Udisc(x,t)\cdot\gradD c(x,t)\ud x\ud t
+ \int_0^{t^{(m)}}\int_\O \left(\PiD c(x,t)\right)^2 q^P(x,t)\ud x\ud t \\
\leq \int_0^{t^{(m)}}\int_\O \hat{c}(x,t)q^I(x,t)\PiD c(x,t)\ud x\ud t.
\end{multline}
One estimates the diffusion-dispersion term from below using the coercivity \eqref{hyp:D}.
Thanks to the nonnegativty of $q^P$, the last term on the left-hand side is nonnegative.
Applying Young's inequality with $\varepsilon=\alpha_\DD$ to the convection term, we obtain
\begin{multline} \label{eq:beforesup}
\frac{1}{2}\int_\O\Phi(x)\left[\left(\PiD c^{(m)}(x)\right)^2 - \left( \PiD c^{(0)}(x)\right)^2\right]\ud x
+ \frac{\alpha_\DD}{2}\int_0^{t^{(m)}}\int_\O\left| \gradD c(x,t)\right|^2\ud x\ud t \\
\leq T\norm{q^I}_{\Leb{\infty}{2}}\norm{\PiD c}_{\Leb{\infty}{2}}
+ \frac{T}{2\alpha_\DD}\norm{\Udisc}_{\Leb{\infty}{2}}^2. 
\end{multline}
In particular, using the estimate \eqref{est:elliptic}
on $\Udisc$ and again applying Young's inequality with $\varepsilon=\frac{\phi_{\ast}}{2T}$,
\begin{multline*}
\frac{\phi_\ast}{2}\int_\O\left( \PiD c^{(m)}(x)\right)^2\ud x\\
\leq \frac{1}{2\phi_\ast}\norm{\PiD c^{(0)}}_{L^2(\O)}^2
+ \frac{\phi_\ast}{4}\norm{\PiD c}_{\Leb{\infty}{2}}^2
+ \frac{T^2}{\phi_\ast}\norm{q^I}_{\Leb{\infty}{2}}^2
+ \frac{T C^2}{2\alpha_\DD}.
\end{multline*}
The right-hand side of this inequality does not depend on $m$. 
Since
\begin{equation*}
\norm{\PiD c}_{\Leb{\infty}{2}}^2
=\sup_{m=1,\ldots,N}\int_\O\left(\PiD c^{(m)}(x)\right)^2\ud x,
\end{equation*}
this yields the estimate on $\norm{\PiD c}_{\Leb{\infty}{2}}$. The estimate on 
$\norm{\PiDe c}_{\Leb{\infty}{2}}$ follows from the fact that
$\norm{\PiDe c}_{\Leb{\infty}{2}}
= \max(\norm{\PiD c}_{\Leb{\infty}{2}}, \norm{\PiD c^{(0)}}_{L^{2}(\O)})$.
Revisiting \eqref{eq:beforesup} gives the estimate on $\gradD c$.
\end{proof}

The nonlinearity introduced by the truncation operator necessitates the use of a fixed
point argument to confirm the existence of solutions to the scheme.
\begin{corollary} \label{cor:exist}
Assume \eqref{assumptions} and let $\disc$ be a gradient discretisation. Then there
exists at least one solution $(p,c)$ to the gradient scheme of Definition \ref{def:scheme}.
\end{corollary}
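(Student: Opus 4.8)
The plan is to build the solution step by step in $n$, starting from the prescribed $c^{(0)}=\interpD c_0$. Assume $c^{(n)}\in\unknowns$ has been constructed. Since the data of the elliptic equation \eqref{scheme_p} depend on $c^{(n)}$ only through $\A(\PiD c^{(n)})$, Lemma \ref{lem:ellipticest} provides a unique $p^{(n+1)}\in\unknowns$ solving \eqref{scheme_p}, and hence a well-defined discrete Darcy velocity $\Udisc^{(n+1)}=-\A(\PiD c^{(n)})\gradD p^{(n+1)}$. It therefore remains to solve \eqref{scheme_c} for $c^{(n+1)}$; this is the only nonlinear step, the nonlinearity being the truncation $\trunc(\PiD c^{(n+1)})$ in the convection term.

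To solve it I would invoke Brouwer's fixed-point theorem. Define $F:\unknowns\to\unknowns$ by letting $F(\bar c)=c^{(n+1)}$ be the solution of the \emph{linear} problem obtained from \eqref{scheme_c} by freezing the argument of the truncation, i.e. replacing $\trunc(\PiD c^{(n+1)})$ by $\trunc(\PiD\bar c)$. Collecting the $c^{(n+1)}$-dependent contributions (time derivative, diffusion and the production term) on the left, this reads $a(c^{(n+1)},w)=\ell_{\bar c}(w)$ for all $w\in\unknowns$, where $a$ is a bilinear form independent of $\bar c$. That the corresponding square matrix is invertible (so $F$ is well-defined) follows by testing $a(v,v)=0$: the time-derivative term contributes $\frac{1}{\deltat}\int_\O\Phi(\PiD v)^2\geq\frac{\phi_\ast}{\deltat}\norm{\PiD v}_{L^2(\O)}^2$ by \eqref{hyp:porosity}, the diffusion term contributes at least $\alpha_\DD\norm{\gradD v}_{L^2(\O)^d}^2$ by the coercivity in \eqref{hyp:D}, and the production term is nonnegative by \eqref{hyp:source}; these force $\norm{v}_{\disc,\para}=0$. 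Continuity of $F$ is then immediate, since $\trunc$ is continuous, the reconstruction operators are linear on a finite-dimensional space, and the matrix of $a$ is fixed.

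It remains to show that $F$ maps a fixed ball into itself, and this is where the truncation is essential. Taking $w=c^{(n+1)}=F(\bar c)$ in $a(c^{(n+1)},w)=\ell_{\bar c}(w)$ and running the one-step version of the energy estimate of Lemma \ref{lem:parabolicest} --- using $(a-b)a\geq\frac{1}{2}(a^2-b^2)$ on the discrete time derivative, the coercivity of $\DD$, the nonnegativity of $q^P$, and Young's inequality with suitably small parameters on the remaining terms --- one controls $\norm{c^{(n+1)}}_{\disc,\para}$. The point is that $0\leq\trunc(\PiD\bar c)\leq1$ pointwise, so the convection term is bounded by $\norm{\Udisc^{(n+1)}}_{L^2(\O)^d}\norm{\gradD c^{(n+1)}}_{L^2(\O)^d}$ and absorbed, \emph{uniformly in $\bar c$}. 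This yields $\norm{F(\bar c)}_{\disc,\para}\leq R$ for an $R$ depending only on $\deltat$, $\phi_\ast$, $\alpha_\DD$, $\norm{\Phi}_{L^\infty(\O)}$, $\norm{\PiD c^{(n)}}_{L^2(\O)}$, $\norm{\Udisc^{(n+1)}}_{L^2(\O)^d}$ and the sources, but not on $\bar c$. Hence $F$ maps the closed ball of radius $R$ in $(\unknowns,\norm{\cdot}_{\disc,\para})$ --- a compact convex set --- continuously into itself, and Brouwer's theorem furnishes a fixed point $c^{(n+1)}$, which is exactly a solution of \eqref{scheme_c}. The only real obstacle is the bookkeeping needed to make the bound on $F(\bar c)$ independent of $\bar c$; without the truncation onto $[0,1]$, a centred discretisation of $\dive(\cbar\,\Ubar)$ would offer no such control.
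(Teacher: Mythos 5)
Your proposal is correct and follows essentially the same route as the paper: freeze the truncation argument to define a map $F$ on $\unknowns$, note that the energy estimate of Lemma \ref{lem:parabolicest} is unchanged (since $0\le\trunc\le 1$) and hence bounds $F(\bar c)$ uniformly in $\bar c$, and conclude by continuity and Brouwer's fixed-point theorem, with the pressure step handled by Lemma \ref{lem:ellipticest}. Your additional details (coercivity of the frozen linear system so that $F$ is well defined, and the explicit invariant ball) are correct elaborations of steps the paper leaves implicit.
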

\begin{proof}
Take $n\in\{0,\ldots,N-1\}$ and assume that $(p^{(n)},c^{(n)})$ are given. Lemma 
\ref{lem:ellipticest} gives the existence of the solution $p^{(n+1)}$ to \eqref{scheme_p}. 
It remains to demonstrate the existence of a solution $c^{(n+1)}$ to \eqref{scheme_c}.
Define $F:\unknowns\to\unknowns$, where for $z\in\unknowns$, $v=F(z)$ is the solution to
 \begin{equation*}
	\begin{gathered}
	\int_\O\left(\Phi(x)\PiD \frac{v-c^{(n)}}{\deltat}(x)\PiD w(x)
	+ \DD(\Udisc^{(n+1)}(x))\gradD v(x)\cdot\gradD w(x)\right)\ud x\\
	- \int_\O \trunc(\PiD z(x))\Udisc^{(n+1)}(x)\cdot\gradD w(x)\ud x
	=  \frac{1}{\deltat}\int_{t^{(n)}}^{t^{(n+1)}}\int_\O\hat{c}(x)q^I(x,t)\PiD w(x)\ud x\ud t\\ 
	- \frac{1}{\deltat}\int_{t^{(n)}}^{t^{(n+1)}}\int_\O\PiD v(x) q^P(x,t)\PiD w(x)\ud x\ud t
	\quad\forall w\in\unknowns.
	\end{gathered} 
\end{equation*}
Replacing $c^{(n+1)}$ by $z$ does not change the computations of Lemma \ref{lem:parabolicest}. 
This shows the existence of $C$ not depending on $z$ such that 	
$\norm{\PiD v}_{L^{2}(\O)}\leq C$ and 
$\norm{\gradD v}_{L^{2}(\O)^d}\leq C$.
Since $F$ is continuous, apply Brouwer's fixed point theorem to conclude.
\end{proof}

In order to obtain the required level of compactness, we must estimate the so-called
{discrete time derivative}. For this we require an appropriate dual norm. To this end, define 
$B_\disc = \left\{ \PiD v \,|\, v\in\unknowns\right\}\subset L^2(\O)$, 
and equip it with the norms
\begin{equation}\label{eq:defbdnormx}
\norm{u}_{B_\disc} = \inf\left\{ \norm{v}_{\disc,\para} : v\in\unknowns\mbox{ is such that }u = \PiD v\right\}\mbox{ and}
\end{equation}
\begin{equation}\label{eq:defbdnormy}
\norm{u}_{\ast,B_\disc} =\sup\left\{\int_\O\Phi(x) u(x)\PiD w(x)\ud x : 
w\in\unknowns, \, \norm{w}_{\disc,\para}=1\right\}.
\end{equation}
Note that $\norm{\cdot}_{*,B_\disc}$ is indeed a norm \cite[Section 4.2.1]{gdmbook}, and
that
\begin{equation}\label{CS:dualnorm}
\forall u\in B_\disc\,,\;\forall w\in \unknowns\,,\;\left|\int_\O \Phi(x)u (x)\PiD w(x)\ud x\right|\le \norm{u}_{\ast,B_\disc}\norm{w}_{\disc,\para}.
\end{equation}

\begin{lemma}\label{lem:dtc}
Assume \eqref{assumptions} and let $\disc$ be a gradient discretisation.
Let $(p,c)$ be a solution to the gradient scheme for \eqref{eq:model}. 
Then there exists $C$ depending only on 
$C_P\geq C_\disc$, $\norm{q^I}_{L^\infty(\O\times(0,T))}$, 
$\norm{q^P}_{L^\infty(\O\times(0,T))}$ and $C_I\geq\norm{\PiD c^{(0)}}_{L^2(\O)}$ such that
\begin{equation}\label{est:dtc}
\int_0^T \Vert \deltaD c(t)\Vert^2_{*,B_\disc}\ud t \leq C.
\end{equation}
\end{lemma}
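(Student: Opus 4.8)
The plan is to read off the pairing of the discrete time derivative against an arbitrary reconstructed test function directly from the scheme \eqref{scheme_c}, bound each term by previously established quantities, and then sum in time. Fix $n\in\{0,\ldots,N-1\}$ and $w\in\unknowns$ with $\norm{w}_{\disc,\para}=1$. Isolating the time-derivative term in \eqref{scheme_c} gives
\begin{align*}
\int_\O\Phi(x)\deltaD^{(n+\frac{1}{2})}c(x)\PiD w(x)\ud x
={}& -\int_\O\DD(\Udisc^{(n+1)}(x))\gradD c^{(n+1)}(x)\cdot\gradD w(x)\ud x\\
&+\int_\O\trunc(\PiD c^{(n+1)}(x))\Udisc^{(n+1)}(x)\cdot\gradD w(x)\ud x\\
&+\frac{1}{\deltat}\int_{t^{(n)}}^{t^{(n+1)}}\!\!\int_\O\big(\hat{c}(x)q^I(x,t)-\PiD c^{(n+1)}(x)q^P(x,t)\big)\PiD w(x)\ud x\ud t.
\end{align*}
I would bound the first term by $\Lambda_{\DD}\norm{\gradD c^{(n+1)}}_{L^2(\O)^d}\norm{\gradD w}_{L^2(\O)^d}$ using the boundedness of $\DD$ in \eqref{hyp:D} and Cauchy--Schwarz; the convection term by $\norm{\Udisc^{(n+1)}}_{L^2(\O)^d}\norm{\gradD w}_{L^2(\O)^d}$ since $|\trunc|\le1$; and the source terms by $\big(|\O|^{1/2}\norm{q^I}_{L^\infty(\O\times(0,T))}+\norm{q^P}_{L^\infty(\O\times(0,T))}\norm{\PiD c^{(n+1)}}_{L^2(\O)}\big)\norm{\PiD w}_{L^2(\O)}$, using $0\le\hat{c}\le1$. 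Since $\norm{\gradD w}_{L^2(\O)^d}\le\norm{w}_{\disc,\para}=1$ and $\norm{\PiD w}_{L^2(\O)}\le\norm{w}_{\disc,\para}=1$, taking the supremum over such $w$ in the definition \eqref{eq:defbdnormy} yields
\[
\norm{\deltaD^{(n+\frac{1}{2})}c}_{\ast,B_\disc}
\le C\Big(1+\norm{\gradD c^{(n+1)}}_{L^2(\O)^d}+\norm{\Udisc^{(n+1)}}_{L^2(\O)^d}+\norm{\PiD c^{(n+1)}}_{L^2(\O)}\Big),
\]
with $C$ depending only on the quantities listed in the statement.

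To finish, I would square this bound (using $(a+b+c+d)^2\le4(a^2+b^2+c^2+d^2)$), multiply by $\deltat$ and sum over $n=0,\ldots,N-1$. Since, by definition, $\deltaD c$ equals $\deltaD^{(n+\frac{1}{2})}c$ on $(t^{(n)},t^{(n+1)}]$, the left-hand side is exactly $\int_0^T\norm{\deltaD c(t)}^2_{\ast,B_\disc}\ud t$. On the right-hand side, $\sum_n\deltat\norm{\gradD c^{(n+1)}}^2_{L^2(\O)^d}=\norm{\gradD c}^2_{\Leb[d]{2}{2}}$ and $\sum_n\deltat\norm{\PiD c^{(n+1)}}^2_{L^2(\O)}\le T\norm{\PiD c}^2_{\Leb{\infty}{2}}$ are controlled by Lemma \ref{lem:parabolicest}; $\sum_n\deltat\norm{\Udisc^{(n+1)}}^2_{L^2(\O)^d}\le T\norm{\Udisc}^2_{\Leb[d]{\infty}{2}}$ is controlled by Lemma \ref{lem:ellipticest}; and $\sum_n\deltat=T$. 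Combining these gives \eqref{est:dtc}.

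There is no genuine obstacle here: the estimate is a direct consequence of the scheme and the energy bounds of Lemmas \ref{lem:ellipticest} and \ref{lem:parabolicest}. The only points needing care are bookkeeping ones. First, one must work throughout with $\norm{\,\cdot\,}_{\disc,\para}$ rather than $\norm{\,\cdot\,}_{\disc,\ellip}$, since the former simultaneously controls $\norm{\PiD w}_{L^2(\O)}$ and $\norm{\gradD w}_{L^2(\O)^d}$ without invoking a Poincaré inequality --- this is exactly why $B_\disc$ is normed via $\norm{\,\cdot\,}_{\disc,\para}$ in \eqref{eq:defbdnormx}--\eqref{eq:defbdnormy}. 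Second, the $L^\infty(\O\times(0,T))$ bound on $q^I$ and $q^P$ (rather than the weaker $\Leb{\infty}{2}$ bound used earlier) is what lets one absorb the additional factor $\PiD c^{(n+1)}$ in the production-well term while keeping only $\norm{\PiD w}_{L^2(\O)}$ on the test side; as observed in Remark \ref{rem:sources}, a discrete Sobolev inequality would permit one to relax this hypothesis.
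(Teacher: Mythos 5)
Your proposal is correct and follows essentially the same route as the paper: isolate the time-derivative term in \eqref{scheme_c}, bound the diffusion, convection and source terms using \eqref{hyp:D}, $|\trunc|\le 1$ and the $L^\infty$ bounds on $q^I,q^P$, take the supremum over $w$ with $\norm{w}_{\disc,\para}=1$ in \eqref{eq:defbdnormy}, then multiply by $\deltat$, sum in $n$ and invoke the estimates of Lemmas \ref{lem:ellipticest} and \ref{lem:parabolicest}. Your bookkeeping (the explicit $|\O|^{1/2}$ factor and the squaring step) is just a more detailed rendering of the same argument.
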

\begin{proof}
Take $t\in(0,T)$ and $w\in\unknowns$.
Then \eqref{scheme_c} gives
\begin{align*}
\int_\O \Phi(x)&\deltaD^{(n+\frac{1}{2})}c(x)\PiD w(x)\ud x\\
\leq{}& \norm{\PiD w}_{L^2(\O)}\left(\norm{q^I}_{L^{\infty}(\O\times(0,T))} +
\norm{\PiD c}_{\Leb{\infty}{2}}\norm{q^P}_{L^\infty(\O\times(0,T))}\right)\\
&+ \norm{\gradD w}_{L^{2}(\O)^d}\left( \Lambda_\DD\norm{\gradD c^{(n+1)}}_{L^{2}(\O)^d}
+ \norm{\Udisc^{(n+1)}}_{L^2(\O)^d}\right) \\
\leq{}& C\norm{w}_{\disc,\para}
\bigg(\norm{q^I}_{L^{\infty}(\Omega\times(0,T))}
+ \norm{\PiD c}_{\Leb{\infty}{2}}\norm{q^P}_{L^{\infty}(\Omega\times(0,T))} \\
&\qquad\qquad\qquad+ \norm{\gradD c^{(n+1)}}_{L^{2}(\O)^d}
+ \norm{\Udisc^{(n+1)}}_{L^2(\O)^d}\bigg).
\end{align*}
Take the supremum over $w\in\unknowns$ with
{$\norm{ w}_{\disc,\para}=1$}, multiply by $\deltat$ and sum
over $n=0,\ldots,N-1$. The conclusion then follows from \eqref{est:parabolic}.
\end{proof}

\section{Proof of the main result} \label{sec:convergence}

\subsection{Step 1: application of compactness results} \label{ssec:step1}

Estimates \eqref{est:parabolic} and \eqref{est:dtc} show that the assumptions of \cite[Theorem 4.14]{gdmbook}
are satisfied for both the explicit and the implicit reconstruction of the concentration
(strictly speaking, \cite[Theorem 4.14]{gdmbook} is based the dual norm \eqref{eq:defbdnormy}
with $\Phi=1$, but the adaptation of the proof to the case of a $\Phi$ satisfying \eqref{hyp:porosity}
is straightforward). This theorem thus provides $\cbar_1,\cbar_2\in\Leb{1}{2}$ such that, 
up to a subsequence, $\PiDm\cDm\to\cbar_1$ and $\PiDme\cDm\to\cbar_2$
in $\Leb{1}{2}$ as $m\to\infty$.
Moreover, thanks to the 
$\Leb{\infty}{2}$ estimate \eqref{est:parabolic}, these convergences also holds in
$\Leb{p}{2}$ for every $p<\infty$ and in $\Leb{\infty}{2}$ weak-$\ast$. 
We handle the case when $p=\infty$ in Step 5. 

We now show that $\cbar_1=\cbar_2$. Take $\varphi\in C^\infty_c(\O\times(0,T))$.
By Lemma \ref{lem:interp}, there exists $v_m=(v_m^{(n)})_{n=0,\ldots,N_m}\in \unknownsm^{N_m+1}$
such that, as $m\to\infty$, $\PiDm v_m\to \varphi$ in $\Leb{2}{2}$ and $\gradDm v_m\to \nabla\varphi$
in $\Leb[d]{2}{2}$. Letting, by abuse of notation, $v_m(t)=v_m^{(n+1)}$ whenever $t\in (t^{(n)},t^{(n+1)}]$,
this shows in particular that $(\norm{v_m(\cdot)}_{\discm,\para})_{m\in\NN}$
is bounded in $L^2(0,T)$.
For $t\in (t^{(n)},t^{(n+1)}]$, $\PiDm\cDm(x,t)-\PiDme\cDm(x,t)=\deltatm \deltaDm\cDm(t)$ and thus,
by \eqref{CS:dualnorm},
\begin{multline*}
\left|\int_0^T\int_\O \Phi(x)\left(\PiDm\cDm(x,t)-\PiDme\cDm(x,t)\right)\PiDm v_m(x,t)\ud x\ud t \right| \\
\leq\deltatDm\int_0^T \norm{\deltaDm\cDm(t)}_{\ast,B_{\discm}}\norm{v_m(t)}_{\discm,\para}\ud t.
\end{multline*}
The estimate \eqref{est:dtc} shows that the right-hand side of this inequality
vanishes as $m\to\infty$.
Since $\left(\PiDm\cDm\right)_{m\in\NN}$ and $\left(\PiDme\cDm\right)_{m\in\NN}$
are bounded in $\Leb{2}{2}$, the convergence properties of $v_m$ yield
\begin{align*}
0 &= \lim_{m\to\infty}\int_0^T\int_\O\Phi(x)\left(\PiDm\cDm(x,t)-\PiDme\cDm(x,t)\right)\PiDm v_m(x,t)\ud x\ud t \\
&= \lim_{m\to\infty}\int_0^T\int_\O\Phi(x)\left(\PiDm\cDm(x,t)-\PiDme\cDm(x,t)\right)\varphi(x,t)\ud x\ud t,
\end{align*}
which implies that the weak-$\ast$ limits of $\left(\PiDm\cDm\right)_{m\in\NN}$ 
and $\left(\PiDme\cDm\right)_{m\in\NN}$ in $\Leb{\infty}{2}$ are identical.
We write $\cbar=\cbar_1=\cbar_2$ for this common limit.
Thanks to \cite[Lemma 4.8]{gdmbook}, the estimates \eqref{est:parabolic} on $\PiD c$ 
and $\gradD c$ imply that $\cbar$ belongs to $L^2(0,T;H^1(\O))$, and moreover
that $\gradDm\cDm\weakto\nabla\cbar$ weakly in $\Leb[d]{2}{2}$.

We turn now to the discrete pressure and discrete Darcy velocity. Estimates \eqref{est:elliptic}
give the existence of $\pbar\in\Leb{\infty}{2}$, $V\in\Leb[d]{\infty}{2}$ and 
$\Ubar\in\Leb[d]{\infty}{2}$ such that, up to a subsequence,
$\PiDm\pDm\weakto\pbar$ in $\Leb{\infty}{2}$ weak-$\ast$, 
$\gradDm\pDm\weakto V$ in $\Leb[d]{\infty}{2}$ weak-$\ast$ and
$\UDm\weakto\Ubar$ in $\Leb[d]{\infty}{2}$ weak-$\ast$. 
Then \cite[Lemma 4.8]{gdmbook} shows that $V=\nabla\pbar$ and therefore that
$\pbar\in L^\infty(0,T; H^1(\O))$. Hypothesis \eqref{hyp:A} on $\A$ and the strong
convergence of $\PiDme\cDm$ to $\cbar$ show that $\A(\PiDme\cDm)\to\A(\cbar)$ in
$\Leb[d\times d]{p}{2}$ for any $p<\infty$. Combined with the weak convergence of
$\gradDm\pDm$ to $\nabla\pbar$, using weak-strong convergence we pass to the limit
on $\UDm=-\A(\PiDme\cDm)\gradDm\pDm$ to see that $\Ubar=-\A(\cbar)\nabla\pbar$. 

\subsection{Step 2: convergence of the scheme for the pressure equation} \label{ssec:step2}

We first pass to the limit on \eqref{scheme_p}. Take $\varphi\in L^2(0,T;H^1(\O))$.
The interpolation result of Lemma \ref{lem:interp} yields $v_m=(v_m^{(n)})_{n=0,\ldots,N_m}\in
\unknownsm^{N_m+1}$ such that $\PiDm v_m\to \varphi$ in $L^2(0,T;L^2(\O))$
and $\gradDm v_m\to \nabla\varphi$ in $L^2(0,T;L^2(\O)^d)$ as $m\to\infty$.
Take $w=\deltatm v_m^{(n+1)}$ as a test function in the scheme for the
elliptic equation and sum on $n=0,\ldots, N_m-1$ to obtain
\begin{multline*}
-\int_0^T\int_\O\UDm(x,t)\cdot\gradDm v_m(x,t)\ud x\ud t
+\int_0^T\left(\int_\O\PiDm\pDm(x,t)\ud x\right)\left(\int_\O \PiDm v_m(x,t)\ud x\right)\ud t\\
=\int_0^T\int_\O \left( q^I - q^P\right)(x,t)\PiDm v_m(x,t)\ud x \ud t.
\end{multline*}
By weak-strong convergence, we can pass to the limit $m\to\infty$ in each of the terms above to see that
\begin{multline}\label{eq:ell_limit}
-\int_0^T\int_\O \Ubar(x,t)\cdot\nabla\varphi(x,t)\ud x\ud t 
+ \int_0^T\left(\int_\O \pbar(x,t)\ud x\right)\left(\int_\O \varphi(x,t)\ud x\right)\ud t\\
= \int_0^T\int_\O \left(q^I - q^P\right)(x,t)\varphi(x,t)\ud x\ud t.
\end{multline}
Taking $\varphi(x,t)=\theta(t)$ with $\theta\in C^\infty_c(0,T)$ shows that $\int_\O \pbar(x,t)\ud x=0$
for almost-every $t\in(0,T)$, and thus that $\pbar\in L^\infty(0,T; H^1_\star(\O))$. 
The relation \eqref{eq:ell_limit} then reduces to 
\begin{equation} \label{eq:pschemeconv}
-\int_0^T\int_\O \Ubar(x,t)\cdot\nabla\varphi(x,t)\ud x\ud t 
= \int_0^T\int_\O \left(q^I - q^P\right)(x,t)\varphi(x,t)\ud x\ud t.
\end{equation}
This has been established for $\varphi$ in $L^2(0,T;H^1(\O))$, but by density of this space in $L^1(0,T;H^1(\O))$
\eqref{eq:pschemeconv} also obviously holds for all test functions in this latter space.

\subsection{Step 3: strong convergence of the approximate pressure} \label{ssec:step3}

Analogously to the continuous problem, in order to pass to the limit on the 
diffusion-dispersion term in the discretised transport equation 
we need the strong convergence of $\UDm$ to $\Ubar$ in $\Leb[d]{2}{2}$. This begins
with the strong convergence of $\gradDm\pDm$ to $\nabla\pbar$ in the same space.

Take $w=\pDm^{(n+1)}$ in the scheme for the elliptic equation, multiply by $\deltatm$
and sum on $n=0,\ldots,N_m-1$:
\begin{multline*}
\int_0^T\int_\O \A\left(\PiDme\cDm(x,t)\right)\gradDm\pDm(x,t)\cdot\gradDm\pDm(x,t)\ud x\ud t \\
+ \int_0^T\left(\int_\O \PiDm\pDm(x,t)\ud x\right)^2\ud t
= \int_0^T\int_\O\left(q^I - q^P\right)(x,t)\PiDm\pDm(x,t)\ud x\ud t.
\end{multline*}
From the weak convergence of $\PiDm\pDm$ to $\pbar$, as $m\to\infty$ the right-hand
side converges to 
\begin{equation*}
\int_0^T\int_\O \left(q^I - q^P\right)(x,t)\pbar(x,t)\ud x\ud t
= \int_0^T\int_\O \A\left(\cbar(x,t)\right)\nabla\pbar(x,t)\cdot\nabla\pbar(x,t)\ud x\ud t,
\end{equation*}
thanks to \eqref{eq:pschemeconv} and the identification $\Ubar=-\A(\cbar)\nabla\pbar$. So
\begin{multline}
\lim_{m\to\infty}\left[\int_0^T\int_\O \A\left(\PiDme\cDm(x,t)\right)\gradDm\pDm(x,t)\cdot\gradDm\pDm(x,t)\ud x\ud t\right.\\
\left.+\int_0^T\left(\int_\O \PiDm\pDm(x,t)\ud x\right)^2\ud t \right]\\
= \int_0^T\int_\O \A\left(\cbar(x,t)\right)\nabla\pbar(x,t)\cdot\nabla\pbar(x,t)\ud x\ud t,
\label{conv.avg.pm}\end{multline}
from which we deduce that
\begin{multline} \label{eq:limsupA}
\limsup_{m\to\infty}\int_0^T\int_\O \A\left(\PiDme\cDm(x,t)\right)\gradDm\pDm(x,t)\cdot\gradDm\pDm(x,t)\ud x\ud t \\
\leq \int_0^T\int_\O \A\left(\cbar(x,t)\right)\nabla\pbar(x,t)\cdot\nabla\pbar(x,t)\ud x\ud t.
\end{multline}
The strong convergence of $\PiDme\cDm$ and hypothesis \eqref{hyp:A} show that
$\A\left(\PiDme\cDm\right)\to\A(\cbar)$ in $\Leb[d\times d]{2}{2}$, and thus
almost everywhere up to a subsequence. By dominated convergence, this shows that
$\A\left(\PiDme\cDm\right)\nabla\pbar\to\A(\cbar)\nabla \pbar$ in $\Leb[d\times d]{2}{2}$.
Then, using hypothesis \eqref{hyp:A},
\begin{align*}
\alpha_\A&\norm{\gradDm\pDm - \nabla\pbar}_{\Leb[d]{2}{2}}^2 \\
\leq{}& \int_0^T\int_\O \A\left(\PiDme\cDm(x,t)\right)\left(\gradDm\pDm - \nabla\pbar\right)(x,t) 
\cdot\left(\gradDm\pDm - \nabla\pbar\right)(x,t)\ud x\ud t \\
={}& \int_0^T\int_\O \A\left(\PiDme\cDm(x,t)\right)\gradDm\pDm(x,t)\cdot\gradDm\pDm(x,t)\ud x\ud t \\
&- \int_0^T\int_\O\gradDm\pDm(x,t)\cdot\A\left(\PiDme\cDm(x,t)\right)\nabla\pbar(x,t)\ud x\ud t\\
&-\int_0^T\int_\O\A\left(\PiDme\cDm(x,t)\right)\nabla\pbar(x,t)\cdot(\gradDm\pDm(x,t)-\nabla\pbar(x,t))\ud x\ud t
\end{align*}
As $m\to\infty$, by weak-strong convergence the second term in the right-hand side converges to
\[
\int_0^T\int_\O\A\left(\cbar(x,t)\right)\nabla \pbar(x,t)\cdot\nabla\pbar(x,t)\ud x\ud t
\]
and the last term tends to $0$. Taking the superior limit and using \eqref{eq:limsupA}
then shows that $\gradDm\pDm\to\nabla\pbar$ strongly in $\Leb[d]{2}{2}$.
Combined with the strong convergence of $\PiDme\cDm$ and
hypothesis \eqref{hyp:A}, this implies that $\UDm\to\Ubar$ strongly in $\Leb[d]{2}{2}$.

We can now address the strong convergence of $\PiDm\pDm$, following the ideas of
Eymard et al. \cite[Lemma 5]{tp}. 
By Lemma \ref{lem:interp}, we can find $v_m=(v_m^{(n)})_{n=0,\ldots,N_m}\in \unknownsm^{N_m+1}$ such
that, as $m\to\infty$, $\PiDm v_m\to \pbar$ in $\Leb{2}{2}$ and $\gradDm v_m\to \nabla\pbar$ in
$\Leb[d]{2}{2}$.
The coercivity of $(\discm)_{m\in\NN}$ implies that
\begin{align}
\Vert\PiDm&\left(\pDm - v_m\right)\Vert^2_{\Leb{2}{2}}\nonumber\\
\leq{}& C_P\left( \int_0^T\norm{\gradDm \left(\pDm - v_m\right)(t)}^2_{L^2(\O)}\ud t
+ \int_0^T \left(\int_\O \PiDm (\pDm - v_m)(x,t)\ud x\right)^2\ud t\right)\nonumber\\
\leq{}& C_P\int_0^T\norm{\gradDm \left(\pDm - v_m\right)(t)}^2_{L^2(\O)}\ud t
+ 2C_P\int_0^T \left(\int_\O \PiDm \pDm(x,t)\ud x\right)^2\ud t\nonumber\\
&+2C_P\int_0^T \left(\int_\O \PiDm v_m(x,t)\ud x\right)^2\ud t.
\label{for.cv.pm}
\end{align}
By strong convergence of $(\gradDm \pDm)_{m\in\NN}$ and $(\gradDm v_m)_{m\in\NN}$,
the term involving the gradients tend to $0$. The strong convergence of $(\PiDm v_m)_{m\in\NN}$
and the fact that $\int_\O \pbar(x,t)\ud x=0$ for almost every $t\in (0,T)$ shows that the
last term tends to $0$. The strong convergences of $(\gradDm \pDm)_{m\in\NN}$ and
of $(\PiDme \cDm)_{m\in\NN}$, and Equation \eqref{conv.avg.pm} show that
\[
\lim_{m\to\infty}\int_0^T \left(\int_\O \PiDm \pDm(x,t)\ud x\right)^2\ud t= 0.
\]
Injected into \eqref{for.cv.pm}, these convergences show that $\PiDm \pDm\to\pbar$ strongly in $\Leb{2}{2}$.
Furthermore, thanks to estimate \eqref{est:elliptic}, this convergence holds in $\Leb{p}{2}$
for any $p<\infty$. 

\subsection{Step 4: convergence of the scheme for the transport equation} \label{ssec:step4}

Let $\psi\in L^2(0,T;H^1(\O))$ such that $\partial_t \psi\in L^2(\O\times(0,T))$,
and take $v_m=(v_m^{(n)})_{n=0,\ldots,N_m}\in \unknownsm^{N_m+1}$
given for $\psi$ by Lemma \ref{lem:interp}. As $m\to\infty$, setting
$\gradDme v_m(x,t)=\gradDm v_m^{(n)}(x)$ for all $t\in (t^{(n)},t^{(n+1)}]$ and all $n=0,\ldots, N_m-1$,
\begin{align*}
&\PiDme v_m\to\psi\mbox{ in $\Leb{2}{2}$, }\gradDme v_m\to\nabla\psi\mbox{ in $\Leb[d]{2}{2}$, }\\
&\PiDm v_m^{(0)}\to \psi(\cdot,0)\mbox{ in $L^2(\O)$, and }\deltaDm v_m\to\partial_t \psi\mbox{ in $\Leb{2}{2}$}.
\end{align*}
Take $w=\deltatm v_m^{(n)}$ as a test function in the scheme for the 
concentration equation and sum on $n=0,\ldots, N_m-1$. We obtain
$\cS_1^{(m)} + \cS_2^{(m)} - \cS_3^{(m)} = \cS_4^{(m)} - \cS_5^{(m)}$,
where
\begin{gather*}
\cS_1^{(m)} = \sum_{n=0}^{N_m-1} \int_\O\Phi(x)\left( \PiDm\cDm^{(n+1)} - \PiDm\cDm^{(n)}\right)\PiDm v_m^{(n)}(x)\ud x\ud t, \\
\cS_2^{(m)} = \int_0^T\int_\O \DD\left(\UDm(x,t)\right)\gradDm\cDm(x,t)\cdot\gradDme v_m(x,t)\ud x\ud t, \\
\cS_3^{(m)} = \int_0^T\int_\O \trunc(\PiDm\cDm(x,t))\UDm(x,t)\cdot\gradDme v_m(x,t)\ud x\ud t, \\
\cS_4^{(m)} = \int_0^T\int_\O\left(\hat{c}q^I\right)(x,t)\PiDme v_m(x,t)\ud x \ud t, \mbox{ and}\\
\cS_5^{(m)} = \int_0^T\int_\O \left(\PiDm\cDm q^P\right)(x,t)\PiDme v_m(x,t)\ud x \ud t.
\end{gather*}
Using a discrete integration-by-parts \cite[Appendix D.1.7]{gdmbook}, the terms 
$[ \PiDm\cDm^{(n+1)} - \PiDm\cDm^{(n)}]\PiDm v_m^{(n)}$ appearing in $\cS_1^{(m)}$
can be transformed into $[\PiDm v_m^{(n)}-\PiDm v_m^{(n+1)} ]\PiDm\cDm^{(n+1)}$, so that
\[
\cS_1^{(m)} = -\int_0^T\int_\O \Phi(x)\PiDm\cDm(x,t) \deltaDm v_m(x,t)\ud x\ud t
- \int_\O \Phi(x)\PiDm\cDm^{(0)}(x)\PiDm v_m^{(0)}(x)\ud x.
\]
The consistency of $(\discm)_{m\in\NN}$ gives $\PiDm\cDm^{(0)}=\PiDm\interpDm c_0\to c_0$ in $L^2(\O)$. 
The convergence properties of $\deltaDm v_m$, $\PiDm v_m^{(0)}$ and $\PiDm \cDm$ then show that, as $m\to\infty$,
\[
\cS_1^{(m)}\to -\int_0^T\int_\O \Phi(x)\cbar(x,t) \partial_t \psi(x,t)\ud x\ud t
- \int_\O \Phi(x)c_0(x)\psi(x,0)\ud x.
\]
By convergence of $\gradDme v_m$, the convergence of $\cS_2^{(m)}$ is assured if we show that 
$\DD\left(\UDm\right)\gradDm\cDm \weakto \DD(\Ubar)\nabla\cbar$ weakly in $\Leb[d]{2}{2}$.
To this end, the strong convergence of $\UDm$ and hypothesis \eqref{hyp:D} show that 
$\DD(\UDm)\to\DD(\Ubar)$ in $\Leb[d\times d]{2}{2}$. Together with the weak convergence
of $\gradDm\cDm$ and the bound on 
$\left(\DD(\UDm)\gradDm\cDm\right)_{m\in\NN}$ in $\Leb[d]{2}{2}$, by Lemma \ref{lem:ws},
$\DD(\UDm)\gradDm\cDm\weakto\DD(\Ubar)\nabla\cbar$ in this space, thus ensuring the
convergence of $\cS_2^{(m)}$.
The convergence of $\cS_3^{(m)}$ is completely analogous. 
Terms $\cS_4^{(m)}$ and $\cS_5^{(m)}$ are straightforward with the convergences of $\PiDme v_m$
and $\PiDm \cDm$. Therefore let $m\to\infty$ in $\cS_1^{(m)} + \cS_2^{(m)} - \cS_3^{(m)} = \cS_4^{(m)} - \cS_5^{(m)}$ to obtain
\begin{equation}\label{transp.eq}
\begin{aligned}
&-\int_0^T\int_\O \Phi(x)\cbar(x,t)\partial_t\psi(x,t)\ud x\ud t
- \int_\O \Phi(x) c_0(x)\psi(x,0)\ud x \\
&+ \int_0^T\int_\O\DD(\Ubar(x,t))\nabla\cbar(x,t)\cdot\nabla\psi(x,t)\ud x\ud t 
- \int_0^T\int_\O \trunc(\cbar(x,t))\Ubar(x,t)\cdot\nabla\psi(x,t)\ud x\ud t \\
&\qquad= \int_0^T\int_\O\left(\hat{c}q^I\right)(x,t)\psi(x)\ud x \ud t
- \int_0^T\int_\O\left(\cbar q^P\right)(x,t)\psi(x)\ud x \ud t.
\end{aligned}
\end{equation}
This relation has been proved for any test function in the space
$\cT = \{ \psi\in L^2(0,T;H^1(\O))\,:\,\partial_t\psi\in \Leb{2}{2}\}$. Since this space is dense
$L^2(0,T; H^1(\O))$, to show that $\Phi\partial_t\cbar$ belongs to 
$L^2(0,T; (H^1(\O))')= (L^2(0,T; H^1(\O))'$, it suffices to show that the linear form
\begin{equation*}
\cT\to\RR,\ \psi\mapsto\langle \Phi\partial_t\cbar,\psi\rangle_{\distt',\mathcal{D}}
= - \int_0^T\int_\O \Phi(x)\cbar(x,t)\partial_t\psi(x,t)\ud x\ud t
\end{equation*}
is continuous for the $L^2(0,T; H^1(\O))$ norm. To see this, transform \eqref{transp.eq}
to write the integral involving $\partial_t\psi$ in terms of integrals purely involving
spatial derivatives, and use the estimates \eqref{est:elliptic} on $\Udisc$ and \eqref{est:parabolic}
together with the regularity of $\hat{c}$ and the sources.

Moreover, because $\Phi$ is independent of time, we have
$\partial_{t}(\Phi\cbar)=\Phi\partial_t\cbar\in L^2(0,T;H^1(\O))$. Since $\Phi\cbar\in L^2(0,T;\Phi H^1(\O))$,
where $\Phi H^1(\O) := \{ \Phi u : u\in H^ 1(\O)\}$ embeds
compactly into $L^2(\O)$. This implies \cite[Section 2.5.2]{poly} that 
$\Phi\cbar$ can be identified with an element of $C([0,T];L^2(\O))$ with, thanks to \eqref{transp.eq}, the property
that $\Phi\cbar(\cdot,0)=\Phi c_0$ in $L^2(\O)$. Again, since $\Phi$ does not depend upon time
we then have $\cbar\in C([0,T];L^2(\O))$ and $\cbar(\cdot,0)=c_0$ in $L^2(\O)$.

Integrating-by-parts the first term in \eqref{transp.eq} and using the density of $\cT$ in
$L^2(0,T; H^1(\O))$, we see that the transport equation in \eqref{eq:weaksol} is satisfied, except with the truncation operator $\trunc$ applied to $\cbar$
in the convection term. Showing that the equation is satisfied without this operator
amounts to establishing the estimate $0\leq\cbar(x,t)\leq1$ for almost-every $(x,t)\in\O\times(0,T)$.
Fabrie and Gallou\"et \cite[Proposition 4.1]{fg00} prove precisely this estimate in our setting
of a bounded diffusion-dispersion tensor, and with a function $f$ that plays the role
of our $\trunc$.

To summarise, we have shown that the limit $(\pbar,\cbar)$ of $(\cDm,\pDm)$ is a solution
of \eqref{eq:model} in the sense of Definition \ref{def:weaksol}, and that properties
(i), (ii) and (iv) of Theorem \ref{th:main} hold. It remains to address (iii), the
uniform-in-time, strong-$L^2(\O)$ convergence of the approximate concentration.

\begin{remark}\label{rem:DDbounded} 
The boundedness hypothesis on $\DD$ was solely used to prove that a
solution to \eqref{transp.eq} remains between $0$ and $1$.
The proof of this in \cite{fg00} relies on using the negative part of $c$ and positive parts of $(1-c)$ as
test functions, which is made possible for $\DD$ bounded because solutions
and test functions are both be taken in $L^2(0,T;H^1(\O))$.
If $\DD$ is given by \eqref{eq:ddt}, then test functions must be taken in $L^2(0,T;W^{1,4}(\O))$
whereas the solution is still only in $L^2(0,T;H^1(\O))$ \cite{dt14}, and proving
that a solution to \eqref{transp.eq} remains between $0$ and $1$ is an open
problem. Should this problem be solved, our convergence analysis would
apply with minor modifications to $\DD$ given by \eqref{eq:ddt}.
\end{remark}

\subsection{Step 5: $\Leb{\infty}{2}$ convergence of the approximate concentration} \label{ssec:step5}

Fix $T_0\in[0,T]$ and take a sequence $(\Tm)_{m\in\NN}\subset[0,T]$ with $\Tm\to T_0$
as $m\to\infty$. Denote by $k_m\in\{0, \ldots, N_m-1\}$ the index such that 
$\Tm\in(t^{(\km)}, t^{(\km+1)}]$. Apply the uniform-in-time, weak-in-space compactness result of \cite[Theorem 4.19]{gdmbook} with estimates \eqref{est:parabolic} and \eqref{est:dtc}
to obtain $\PiDm\cDm\to\cbar$ and $\PiDme\cDm\to\cbar$, both in $L^\infty(0,T;L^2(\O)\weak)$,
where $L^2(\O)\weak$ denotes $L^2(\O)$ equipped with the weak topology. This gives 
$\sqrt{\Phi}\PiDm\cDm(\cdot,\Tm)\weakto\sqrt{\Phi}\cbar(\cdot,T_0)$ weakly in $L^2(\O)$ and hence
\begin{equation} \label{eq:liminfc}
\liminf_{m\to\infty}\int_\O \Phi(x)\left( \PiDm\cDm(x,\Tm)\right)^2\ud x
\geq \int_\O \Phi(x)\left(\cbar(x,T_0)\right)^2\ud x.
\end{equation}
Take $w=\cDm^{(n+1)}$ in the scheme, multiply by $\deltatm$ and sum over $n=0, \ldots, \km$.
Reasoning as in the proof of Lemma \ref{lem:parabolicest}, we obtain \eqref{eq:2ndenergy}
with $\discm$ and $\km$ in place of $\disc$ and $m$. 
Using the consistency of $(\discm)_{m\in\NN}$, take the limit
superior of \eqref{eq:2ndenergy} as $m\to\infty$:
\begin{align*}
\frac{1}{2}\limsup_{m\to\infty}&\int_\O\Phi\left(\PiDm\cDm(x,\Tm)\right)^2\ud x\\
\leq{}& \frac{1}{2}\int_\O \Phi(x)\left(c_0(x)\right)^2 \ud x
+ \limsup_{m\to\infty}\int_0^{t^{(\km+1)}}\int_\O \left(\hat{c}q^I\PiDm\cDm\right)(x,t)\ud x\ud t \\
&- \liminf_{m\to\infty}\int_0^{t^{(\km+1)}}\int_\O \left(\PiDm\cDm(x,t)\right)^2 q^P(x,t)\ud x\ud t\\
&+ \limsup_{m\to\infty}\int_0^{\Tm}\int_\O \trunc\left(\PiDm\cDm(x,t)\right)\UDm(x,t)\cdot\gradDm\cDm(x,t)\ud x\ud t \\
&- \liminf_{m\to\infty}\int_0^{\Tm}\int_\O \DD(\UDm(x,t))\gradDm\cDm(x,t)\cdot\gradDm\cDm(x,t)\ud x\ud t \\
=:{}& \frac{1}{2}\int_\O \Phi(x)\left(c_0(x)\right)^2 \ud x
+ \limsup_{m\to\infty}\fS_1^{(m)} - \liminf_{m\to\infty}\fS_2^{(m)}
+ \limsup_{m\to\infty}\fS_3^{(m)} - \liminf_{m\to\infty}\fS_4^{(m)}.
\end{align*}
Note that the sequence $\left(t^{(\km)}\right)_{m\in\NN}$ converges to $T_0$ as $m\to\infty$.
Since $\PiDm\cDm\to\cbar$ strongly in
$\Leb{2}{2}$,
\begin{equation*}
\limsup_{m\to\infty}\fS_1^{(m)} = \int_0^{T_0}\int_\O\left(\hat{c}q^I\cbar\right)(x,t)\ud x\ud t\quad\mbox{and}\quad
\liminf_{m\to\infty}\fS_2^{(m)} = \int_0^{T_0}\int_\O\left(\cbar(x,t)\right)^2 q^P(x,t)\ud x\ud t.
\end{equation*}
Now $\trunc\left(\PiDm\cDm\right)\to\trunc(\cbar)$ strongly in $\Leb{2}{2}$ and 
$\gradDm\cDm\weakto\nabla\cbar$ weakly in $\Leb[d]{2}{2}$. Since the product of
these two sequences is bounded in $\Leb[d]{2}{2}$, by Lemma \ref{lem:ws} we have
$\trunc\left(\PiDm\cDm\right)\gradDm\cDm\weakto\trunc(\cbar)\nabla\cbar$ weakly in
$\Leb[d]{2}{2}$. Combined with the fact that $\one_{[0,\Tm]}\UDm$ converges to $\one_{[0,T_0]}\Ubar$
strongly in $\Leb[d]{2}{2}$, we have
\begin{equation*}
\limsup_{m\to\infty}\fS_3^{(m)} = \int_0^{T_0}\int_\O \trunc(\cbar(x,t))\Ubar(x,t)\cdot\nabla\cbar(x,t)\ud x\ud t
= \int_0^{T_0}\int_\O\cbar(x,t)\Ubar(x,t)\cdot\nabla\cbar(x,t)\ud x\ud t,
\end{equation*}
since $0\leq\cbar(x,t)\leq1$ almost-everywhere on $\O\times(0,T)$.
Using arguments similar to those above, it is straightforward to verify that 
$\DD^{1/2}(\UDm)\gradDm\cDm\weakto\DD^{1/2}(\Ubar)\nabla\cbar$ weakly in $\Leb[d]{2}{2}$.
It then follows that
\begin{equation*}
\liminf_{m\to\infty}\fS_4^{(m)} \geq \int_0^{T_0}\int_\O\DD(\Ubar(x,t))\nabla\cbar(x,t)\cdot\nabla\cbar(x,t)\ud x\ud t.
\end{equation*}
Putting it all together, we have
\begin{equation}\label{liminf.energy}
\begin{aligned}
\frac{1}{2}\limsup_{m\to\infty}&\int_\O\Phi(x)\left(\PiDm\cDm(x,\Tm)\right)^2\ud x
\leq \frac{1}{2}\int_\O \Phi(x)\left(c_0(x)\right)^2 \ud x \\
&+ \int_0^{T_0}\int_\O\left(\hat{c}q^I\cbar\right)(x,t)\ud x\ud t 
- \int_0^{T_0}\int_\O\left(\cbar(x,t)\right)^2 q^P(x,t)\ud x\ud t\\
&+ \int_0^{T_0}\int_\O \cbar(x,t)\Ubar(x,t)\cdot\nabla\cbar(x,t)\ud x\ud t \\
&- \int_0^{T_0}\int_\O\DD(\Ubar(x,t))\nabla\cbar(x,t)\cdot\nabla\cbar(x,t)\ud x\ud t.
\end{aligned}
\end{equation}
{Writing the first equation of \eqref{eq:weaksol} with $\varphi=\cbar^2/2$ shows that
\begin{align*}
\int_0^{T_0}\int_\O \cbar(x,t)\Ubar(x,t)\cdot\nabla\cbar(x,t)\ud x\ud t={}&
\int_0^{T_0}\int_\O \Ubar(x,t)\cdot\nabla\left(\frac{\cbar(x,t)^2}{2}\right)\ud x\ud t\\
={}&
-\frac{1}{2}\int_0^{T_0}\int_\O \left( q^I - q^P\right)(x,t)\cbar(x,t)^2\ud x\ud t.
\end{align*}
Plugging this relation in \eqref{liminf.energy} and recalling the energy identity \eqref{eq:energyid}, we infer}
\begin{equation} \label{eq:limsupc}
\limsup_{m\to\infty}\int_\O\Phi\left(\PiDm\cDm(x,\Tm)\right)^2\ud x
\leq \int_\O \Phi(x)\left(\cbar(x,T_0)\right)^2\ud x.
\end{equation}
Comparing \eqref{eq:liminfc} and \eqref{eq:limsupc} shows that
$\lim_{m\to\infty}\Vert\sqrt{\Phi}\PiDm\cDm(\cdot,\Tm)\Vert^2_{L^2(\O)}=\Vert\sqrt{\Phi}\cbar(\cdot,T_0)\Vert^2_{L^2(\O)}$.
Together with the weak-$L^2(\O)$ convergence established earlier, this gives
$\sqrt{\Phi}\PiDm\cDm(\cdot,\Tm)\to\sqrt{\Phi}\cbar(\cdot,T_0)$ strongly in $L^2(\O)$. 
From the characterisation of uniform convergence given in Lemma \ref{lem:equiv-unifconv} 
and the uniform positivity of $\Phi$, we conclude that
$\PiDm\cDm\to\cbar$ in $\Leb{\infty}{2}$. Since $\PiDm\cDm$ is piecewise constant in time,
this convergence is actually uniform, not just ``uniform almost everywhere''. It remains to establish this convergence
for $\left(\PiDme\cDm\right)_{m\in\NN}$. To this end, observe that for every $m\in\NN$
there is a mapping $\gamma_m:\RR\to\RR$ such that $\left|\gamma_m - \Id\right|\leq\deltatm$
and $\PiDme\cDm=\PiDm\cDm\circ\gamma_m$. Then
\begin{align*}
\sup_{t\in [0,T]}\norm{\PiDme\cDm-\cbar}_{L^2(\O)}
&\le \sup_{t\in [0,T]}\norm{\PiDm\cDm\circ\gamma_m-\cbar\circ\gamma_m}_{L^2(\O)} 
+ \sup_{t\in [0,T]}\norm{\cbar\circ\gamma_m-\cbar}_{L^2(\O)} \\
& \le \sup_{t\in [0,T]}\norm{\PiDm\cDm-\cbar}_{L^2(\O)} + \sup_{t\in [0,T]}\norm{\cbar\circ\gamma_m-\cbar}_{L^2(\O)},
\end{align*}
and the second of these terms vanishes since $\cbar\in C([0,T];L^2(\O))$ and 
$\gamma_m\to\Id$ uniformly as $m\to\infty$. This completes the proof of
Theorem \ref{th:main}. \qed

\section{{Numerical experiments}} \label{sec:experiments}

We present here numerical results using schemes A and B defined in Section \ref{ssec:examples}. These tests show that the methods behave well when the molecular diffusion is strong enough, or for large time steps, but that they become unstable and inaccurate for small or vanishing molecular diffusion and small time steps. Upstream versions of these schemes do correct these issues, but we propose a modification that enables us to recover stable and accurate schemes at all considered time steps, and for any level of molecular diffusion.

In all tests cases, $\hat c\equiv 1$ and the viscosity $\mu$ is defined by 
\begin{equation} \label{eq:viscosity}
\mu(c) = \mu(0)\left(1 + \big(M^{1/4} - 1\big)c\right)^{-4} \quad\mbox{for $c \in [0, 1]$,} 
\end{equation}
with $\mu(0)=1$ cp and a mobility ratio $M = \mu(0)/\mu(1)$ specified for each test.

Scheme A is tested on Cartesian meshes with $25\times 25$, $50\times 50$ and $100\times 100$ cells.
Scheme B is tested on triangular meshes from the FVCA8 Benchmark \cite{2Dbench}; these meshes
are built by scaling and reproducing a certain number of times an initial triangulation of the unit square: Mesh4, shown in Figure \ref{fig:mesh4}, corresponds to 16 reproductions of this pattern,
Mesh5 to 32 reproductions, and Mesh 6 to 64 reproductions.

\begin{figure}[h!]
\resizebox{0.5\linewidth}{!}{\includegraphics{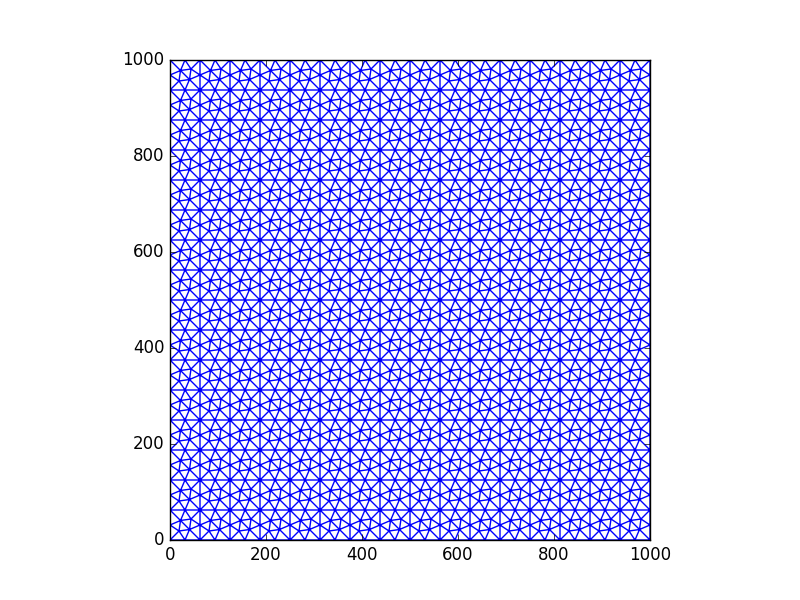}}
\caption{Mesh4}
\label{fig:mesh4}
\end{figure}

In our tests we are led to compare the initial scheme with the scheme obtained by the following modification of the diffusion--dispersion tensor, which consists in rewriting \eqref{eq:ddt} as
\[
(\D{x}{\U})_{i,i} = D_m + |\U|\Big( (D_l-D_t)\frac{\U_i^2}{|\U|^2} + D_t \Big),
\]
setting $D_\alpha = \Phi d_\alpha$ for $\alpha = m,l,t$, and
\[
(\D{x}{\U})_{i,j} = |\U|\Big( (D_l-D_t)\frac{\U_i\U_j}{|\U|^2} \Big)\hbox{ for } j\neq i.
\]
Then, for some tests with too low values for $d_m$, $d_l$ or $d_t$, we use the centred scheme in which we replace $\DD$ with ${\DD}_h$ defined by
\begin{equation} \label{eq:ddtmod}
\begin{array}{ll}
({\DD}_h(x,\U))_{i,i} = \max \Big( (\D{x}{\U})_{i,i},|\U| h \Big),\\
({\DD}_h(x,\U))_{i,j} = (\D{x}{\U})_{i,j}\hbox{ for }j\neq i,
\end{array}
\end{equation}
where $h$ is the size of the mesh (note that the dimension of $D_l$ and $D_t$ is that of a length). This modification introduces some diffusion, scaled by the approximate Darcy velocity and vanishing with the mesh size. This numerical diffusion is not larger than the diffusion induced by upstream schemes, but has the added advantage of being isotropic when the numerical diffusion from upstreaming is larger in the direction of the flow.
Note that the convergence analysis carried out in the previous sections also applies to $\DD_h$ provided
that $d_m$, $d_l$ and $d_t$ are all strictly positive, since for $h\le \min(D_l,D_t)$ we have $\DD_h=\DD$.

\subsection{Analytical solutions}

Exact solutions of nonlinear models involved in groundwater flows are very rare,
and often limited to 1D models or to single equations; see e.g.\ \cite{broadbridge1,broadbridge2}.
These do not account for the coupling occurring for example in \eqref{peacemanmodel}, and which is at the core of numerical issues (a poor approximation of the Darcy velocity reflects strongly on the concentration fields, which in return further degrades the Darcy velocity).
In this section, we design an analytic solution for a slightly modified version of
\eqref{peacemanmodel}. Specifically, we remove the source and reaction terms in the concentration equation, and we consider Dirichlet boundary conditions on part of the domain. The solution however
has similar features to those found in real solutions: it flows from one part of the domain to the other part, and satisfies the same strongly coupled equations inside the domain -- and thus enable us to illustrate corresponding numerical challenges.

All data in this section are dimensionless. We take $\Phi=1$, $c_0=0$, $\K={\rm Id}$, $\Omega=(0,1)^2$, and final time  $T = 0.4$. Following ideas in \cite{eym:grid.orientation}, we seek a solution $(p,c)$ to \eqref{peacemanmodel} in radial coordinates $\rho$ centred at $(1,1)$. The tensor $\DD$ is defined by \eqref{eq:ddt} with $\dl=\dt=0$ and $\dm>0$ chosen such that 
\begin{equation}\label{def:N}
N = \frac {2} {4 \dm} - 1\in\NN.
\end{equation}
Removing the production reaction term for the equation on $c$ (see Remark \ref{rem:sources} about the injection source term), this solution $(p,c)$ satisfies:
\begin{align*}
&u_\rho=\frac {1} {\rho} =  -\frac{1}{\mu(c(\rho,t))}\partial_\rho p(\rho,t), \quad(\rho,t)\in(0,\infty)\times(0,T),\\
&\rho \partial_{t}c(\rho,t)-\partial_\rho\left(\rho (\dm \partial_\rho c - u_\rho c)\right)(\rho,t) = 0,
\quad(\rho,t)\in(0,\infty)\times(0,T),
\end{align*}
with $c(0,t) = 1$ and $c(+\infty,t) = 0$. The full Darcy velocity is $\U(\rho,t)=u_\rho\mathbf{e}_\rho$, where $(\mathbf{e}_\rho,\mathbf{e}_\theta)$ are the local polar unit vectors.

Denoting by $\delta_A$ the Dirac measure at a point $A\in\overline{\O}$, we set
$q^I(x)\ud x= \frac{\pi}{2} \ \delta_{(1,1)}$. For $x=(x_1,x_2)\in (0,1)\times\{0\}$,  $q^P(x) = {\rm d}\theta(x_1)$ is the lineic Dirac measure weighted by the derivate of the angle $\theta(x_1)=OIM$ 
with $O=(0,0)$, $I=(1,1)$ and $M=(x_1,0)$. For $x=(x_1,x_2)\in \{0\}\times(0,1)$,  $q^P(x) = {\rm d}\theta(x_2)$ is the lineic Dirac measure weighted by the derivate of the angle $\theta(x_2)=OIM$ with $M=(0,x_2)$. Hence, solvent is injected at the top right corner of the domain and a mixture of oil
and solvent is recovered at both sides of the domain passing by the origin, both at a rate of $\frac{\pi}{2}$. These source terms, the expression for the function $\mu$, and the values for $\dm$ are inspired by Tests 1 and 2 in \cite{wan00,cd07}, in which solvent is injected at the top right corner and the mixture is produced at the bottom left corner.

\begin{remark}[Hidden source terms]\label{rem:sources}
The Darcy velocity $\U$ defined above satisfies $\dive(\U)=0$ in the sense of distributions on $\Omega$. However, one can easily check
that if $\varphi\in C^1(\RR^2)$ then, with $B_\epsilon$ the ball of center $(1,1)$ and radius $\epsilon$,
\begin{equation}\label{pv.int}
\mathrm{PV}\int_\Omega \U\cdot\nabla\varphi:=\lim_{\epsilon\to 0}\int_{\Omega\backslash B_\epsilon} \U\cdot\nabla\varphi = \langle q^I,\varphi\rangle
-\langle q^P,\varphi\rangle,
\end{equation}
where the duality products are understood in the sense of distributions on $\RR^2$ (or as integrals against the measures $q^I$ and $q^P$). Here, PV denotes the `principal value' of the integral, a classical notion in the context of distributions (but slightly less classical here since the singularity is on the boundary of the domain). Equation \eqref{pv.int} shows that the relation $\dive(\U)=q^I-q^P$ is satisfied in a weak sense against test functions in $C^1(\overline{\Omega})$.

One could also consider that the source terms $q^I$ and $q^P$ are handled as non-homogeneous Neumann boundary conditions (which is how they are implemented in our tests). Indeed, for elliptic PDEs with Neumann boundary conditions, including measures in the boundary conditions or the same measures as `source terms' of the PDE lead to the exact same weak formulations.

The same reasoning applies to the equation on $c$. Although the equation does not seem to contain any injection source term $q^I$, this source term is actually hidden `on the boundary'. 
\end{remark}

The equation on $c$ thus no longer depends on $p$, and a solution is obtained by setting $c(\rho,t) = \psi(\rho^2/(4 \dm t))$, where $\psi$ satisfies
\[
 \forall z>0,\quad -z \psi'(z) + N \psi'(z) - z \psi''(z) = 0,
\]
with $\psi(0) = 1$ and $\psi(+\infty) = 0$ and $N$ defined by \eqref{def:N}. This function is given by 
\[
 \psi(z) = e^{-z} \sum_{k=0}^N \frac {z^k} {k!}.
\]
We impose the nonhomogeneous Dirichlet boundary condition $c(x,t) = \psi(\rho^2/(4 \dm t))$ on $[(0,1)\times\{0\}]\cup  [\{0\}\times(0,1)]$, to preserve the radial symmetry of the problem and the expression of the exact solution.

\begin{remark}
The values of $\psi$ can easily be evaluated by calculating the terms $v_0 = 0$, $v_{k+1} = \frac {z} {N-k} (v_k + e^{-z} )$, for $k=0,\ldots, N-1$, and by setting $\psi(z) =  v_N+e^{-z} $.
\end{remark}

The exact solutions corresponding to these tests are shown in Figure \ref{fig:analytical.exact}. Note that in this particular situation, the values for $c$ do not depend on the function $\mu$; however, the numerical issues to approximate this solution remain since we consider the full coupled system, in which $\mu$ strongly impacts the approximation of the Darcy velocity.

\begin{figure}[!ht] 
\begin{center}
\resizebox{0.6\textwidth}{!}{\includegraphics{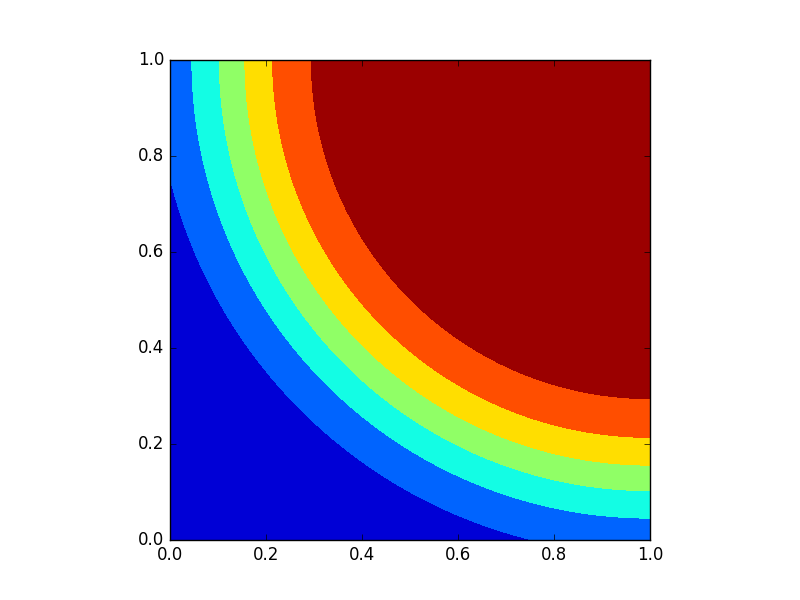}\includegraphics{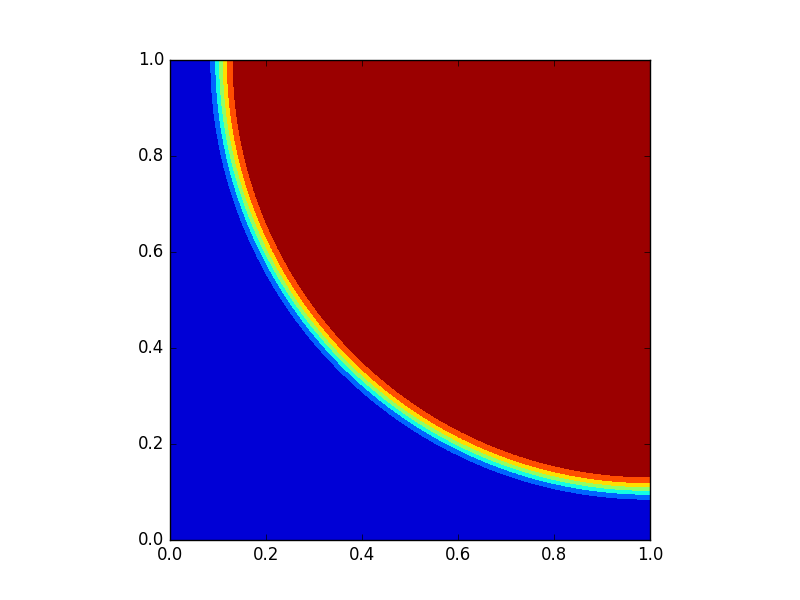}}
\end{center}
\caption{Exact solutions: analytical test 1 (left) corresponds to $d_m=0.05$;
analytical test 2 (right) corresponds to $d_m=0.001$.
Values from $0$ (dark blue) to $1$ (dark red).}
\protect{\label{fig:analytical.exact}}
\end{figure}

\subsubsection{Analytical test 1: constant viscosity, large $d_m$}

We take here $M=1$ (hence, $\mu$ is constant) and $d_m = 0.05$. 
Figure \ref{fig:scheme_a_b_ta1} presents the concentration $c$ calculated by the two schemes; $L^1$ and $L^2$ errors for various meshes and time steps are shown in
Table \ref{tab:ta1}. All these results indicate a clear convergence of both schemes,
with a rate close to $\mathcal O(h^2+\delta t)$, as expected for the first order Schemes A and B
with implicit time stepping.

\begin{figure}[!ht] 
\begin{center}
\resizebox{0.6\textwidth}{!}{\includegraphics{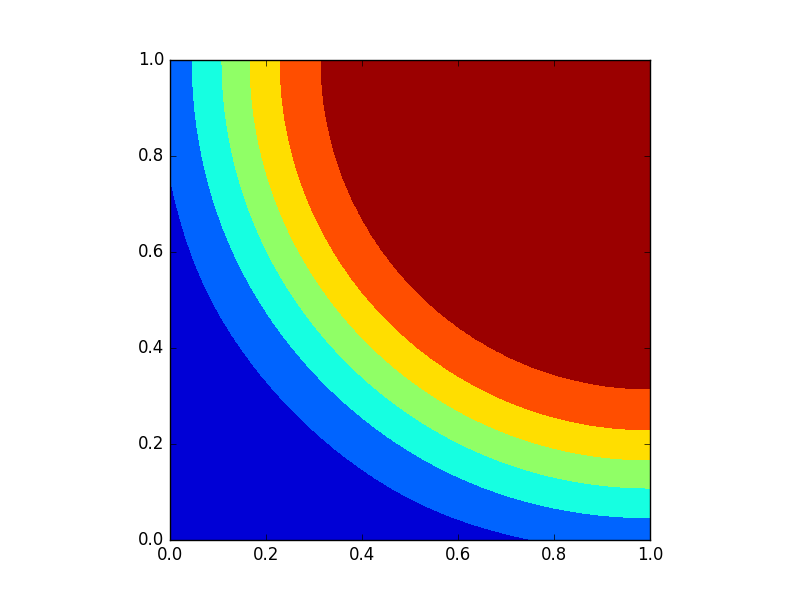}\includegraphics{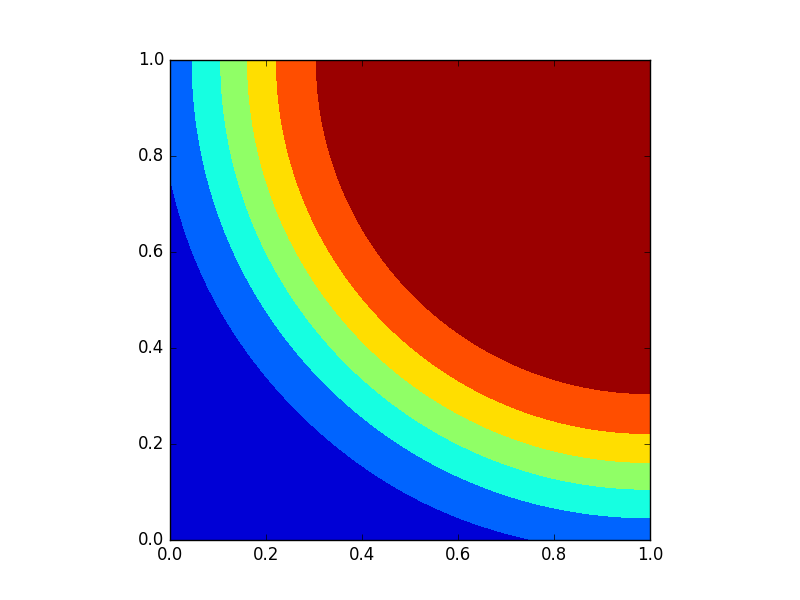}}
\end{center}
\caption{Analytical test 1, concentration with centred scheme: Scheme A on a $50\times50$ mesh with time step $0.01$ (left); Scheme B on Mesh5 with time step $0.005$ (right).
Values from $0$ (dark blue) to $1$ (dark red).}
\protect{\label{fig:scheme_a_b_ta1}}
\end{figure}

\begin{table}
\begin{tabular}{|c|c|c|c|c|}
\hline
Scheme & Mesh & $\delta t$ & $L^1$ error & $L^2$ error\\
\hline
\hline
\multirow{3}{*}{A} &   $25\times 25$ & $0.02$ & 2.38E-2  &  3.23E-2\\
\cline{2-5}
 &$50\times 50$ & $0.005$ & 6.69E-3 & 9.10E-3 \\
\cline{2-5}
 & $100\times 100$ & $0.00125$ & 1.73E-3 & 2.36E-3\\
\hline
\hline
\multirow{3}{*}{B} & mesh4 & $0.02$ & 2.39E-2 & 3.20E-2\\
\cline{2-5}
 & mesh5 & $0.005$ & 6.70E-3 & 9.04E-3\\
\cline{2-5}
 & mesh6 & $0.00125$ & 1.73E-3 & 2.38E-3\\
\hline
\end{tabular}
\caption{Analytical test 1: $L^1$ and $L^2$ errors at the final time $T=0.4$.}
\label{tab:ta1}
\end{table}

\subsubsection{Analytical test 2: varying viscosity, small $d_m$}

In this test, we let $M=40$ and $d_m = 0.001$. The viscosity therefore varies, the natural diffusion is very small, and the problem is highly advection-dominated. As can be seen in Figures \ref{fig:scheme_a_ta2}--\ref{fig:scheme_b_ta2} and in Tables \ref{tab:ta2_a}--\ref{tab:ta2_b}, the
centred versions of Schemes A and B produce unacceptable solutions, and do not seem to converge.
The same conclusion holds for upstream versions of these schemes: even though the grid effects seem
to be somehow mitigated by the upstreaming, the solutions are still very distorted and there
is no apparent numerical convergence. 

The reasons for this failure of both the centred and upstream schemes might be found in the combination of two factors: the viscosity varying with $c$ negatively impacts the quality of the numerical Darcy velocity, which in turns generate bad fluxes when used in the concentration equation; in case of upstreaming, the numerical diffusion introduced by this process is anisotropic. It occurs mostly in the direction of the numerical Darcy velocity, and a poorly approximated velocity therefore results in numerical diffusion in unphysical directions -- typically, directions dictated by the grid rather than the genuine flow.

To mitigate these issues, we use the modification \eqref{eq:ddtmod}.
The results presented in Figures \ref{fig:scheme_a_ta2}--\ref{fig:scheme_b_ta2} and in Tables \ref{tab:ta2_a}--\ref{tab:ta2_b} show a clear improvement of the numerical solution. It has the expected
shape, and convergence seems to occur (even though at a slow rate).

\begin{figure}[!ht] 
\begin{center}
\resizebox{0.6\textwidth}{!}{\includegraphics{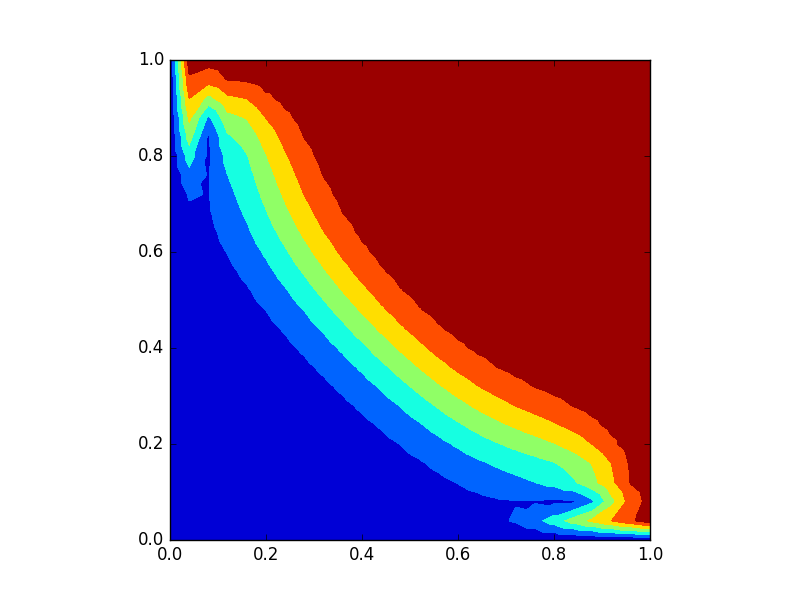}\includegraphics{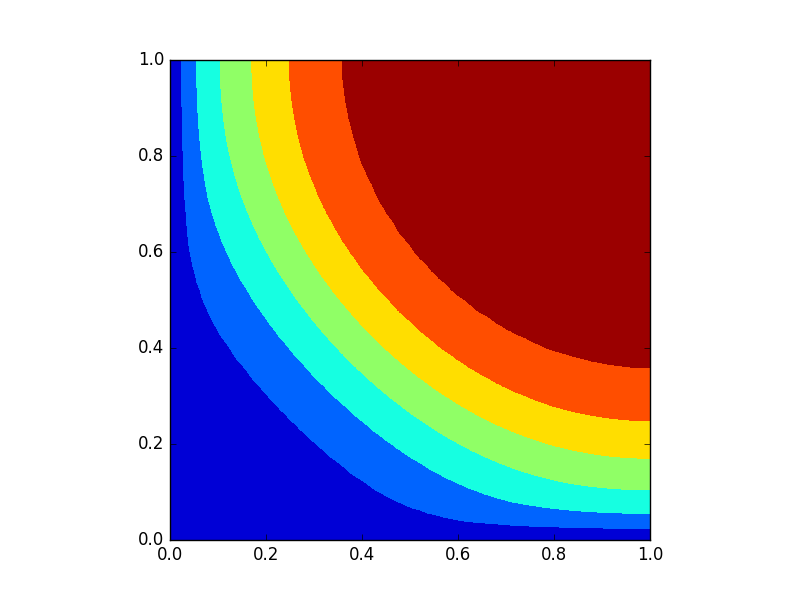}\includegraphics{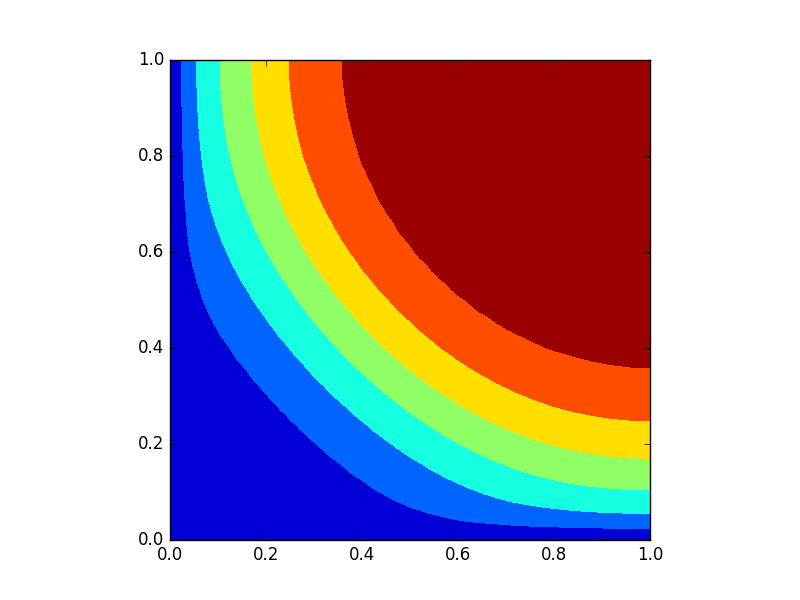}}
\resizebox{0.6\textwidth}{!}{\includegraphics{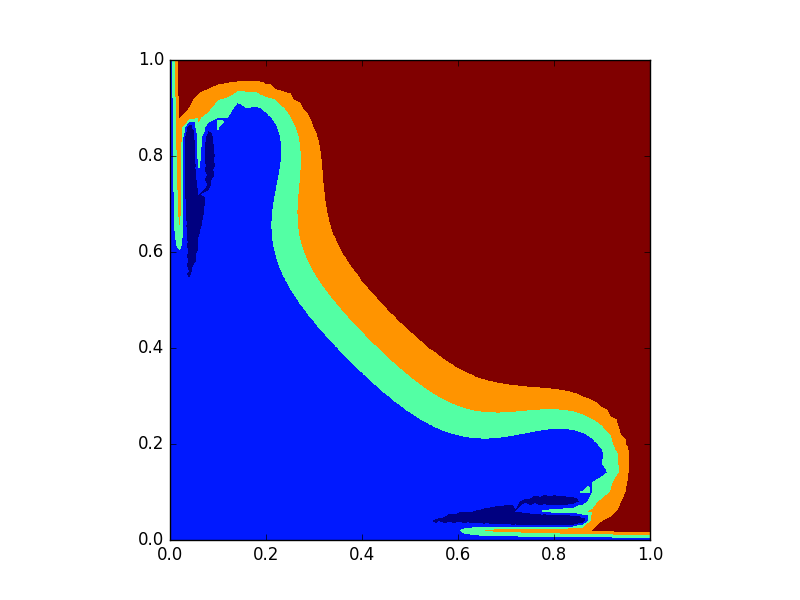}\includegraphics{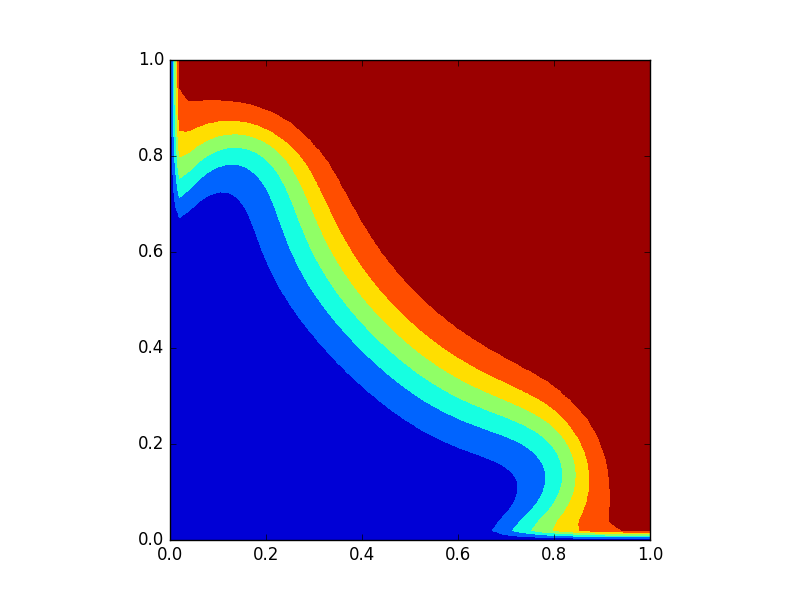}\includegraphics{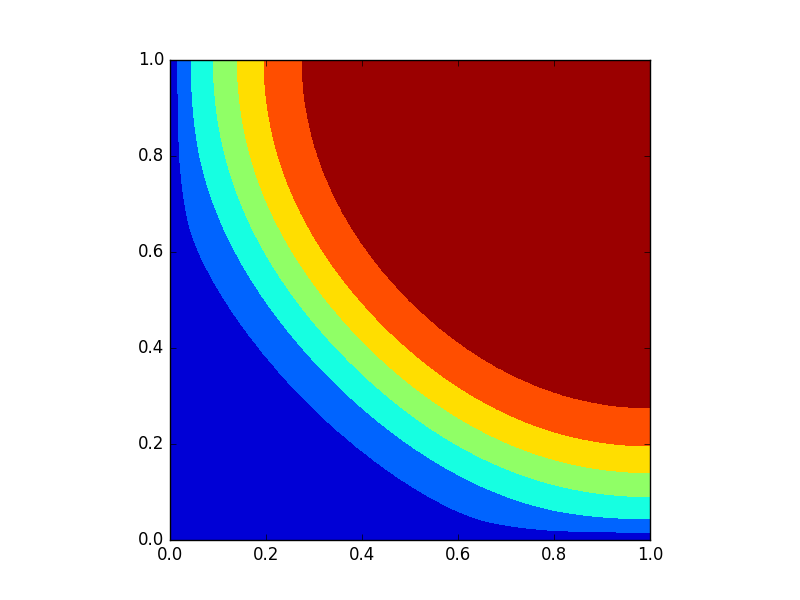}}
\resizebox{0.6\textwidth}{!}{\includegraphics{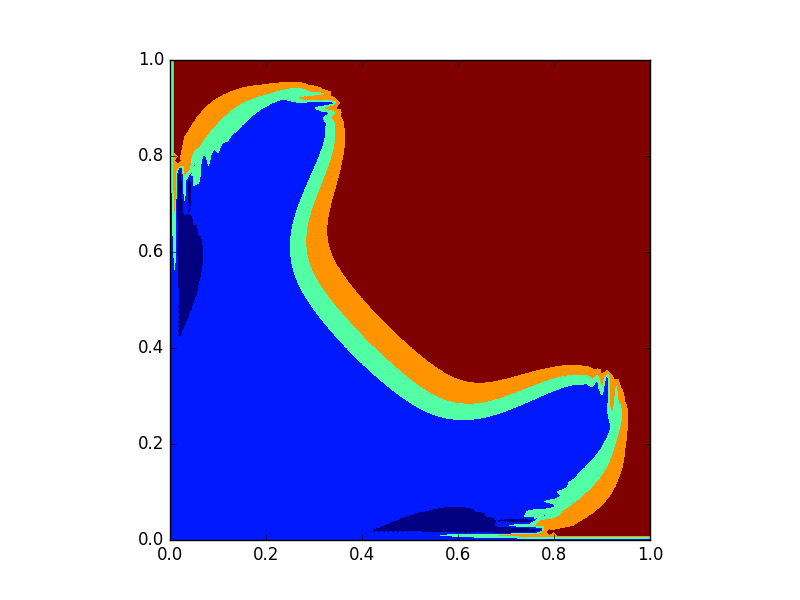}\includegraphics{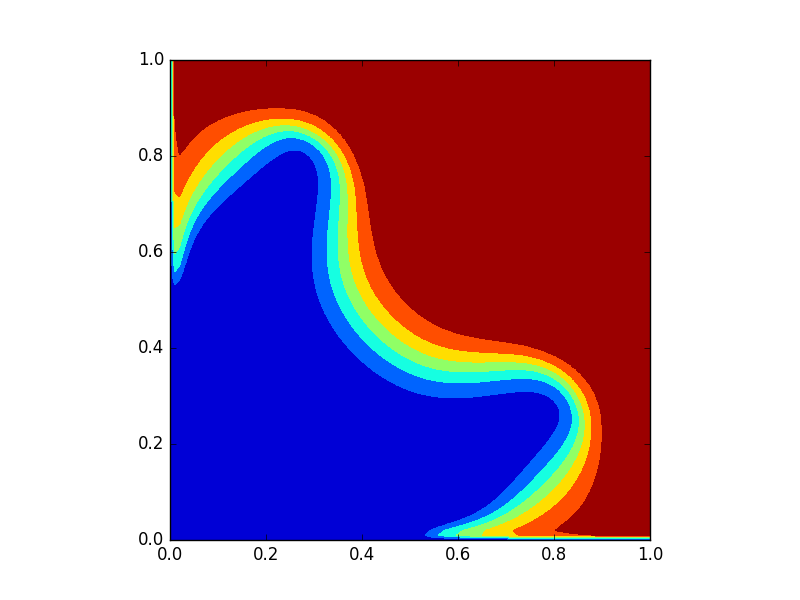}\includegraphics{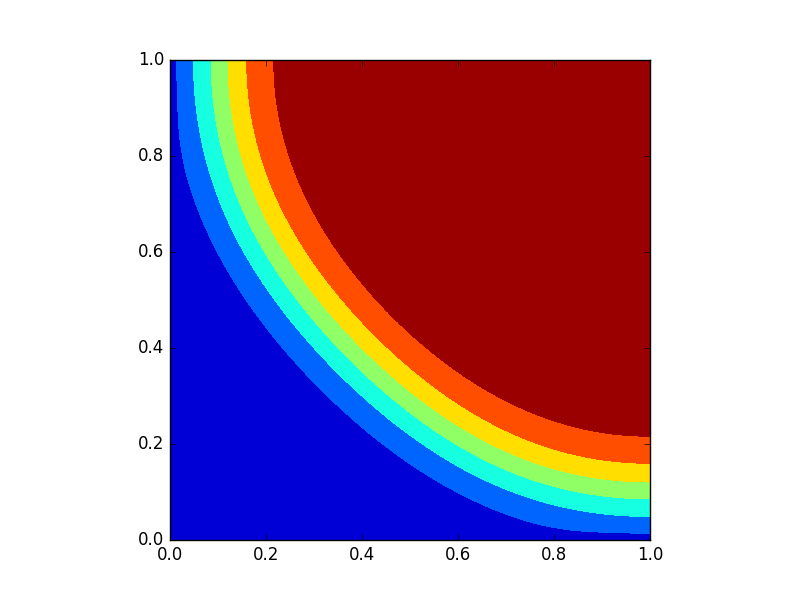}}
\end{center}
\caption{Analytical test 2, Scheme A. From top to bottom:  $25\times 25$ mesh and time step $0.02$; $50\times 50$ mesh and time step $0.01$; $100\times100$ mesh and time step $0.005$. From left to right: centred, upstream, modification by \eqref{eq:ddtmod} (right).
Values from $0$ (dark blue) to $1$ (dark red).}
\protect{\label{fig:scheme_a_ta2}}
\end{figure}

\begin{table}
\begin{tabular}{|c|c|c|c|c|}
\hline
Variant & Mesh & $\delta t$  & $L^1$ error & $L^2$ error \\
\hline
\multirow{3}{*}{centred} & $25\times 25$ & 0.02 & 1.06E-1 & 2.00E-1\\
\cline{2-5}
 & $50\times 50$ &0.01 & 1.32E-1 & 2.77E-1\\
\cline{2-5}
 & $100\times 100$ &0.005 & 1.71E-1 & 3.49E-1\\
\hline
\hline
\multirow{3}{*}{upstream} &  $25\times 25$ &0.02 & 1.51E-1 & 2.04E-1\\
\cline{2-5}
&$50\times 50$ &0.01 & 1.35E-1 & 2.65E-1\\
\cline{2-5}
&$100\times 100$ &0.005 & 2.00E-1 & 3.78E-1 \\
\hline
\hline
\multirow{3}{*}{With \eqref{eq:ddtmod}} & $25\times 25$ & 0.02 & 1.51E-1 & 2.04E-1\\
\cline{2-5}
&$50\times 50$ &0.01 & 1.11E-1 & 1.66E-1\\
\cline{2-5}
&$100\times 100$ & 0.005 & 7.80E-2 & 1.32E-1\\
\hline
\end{tabular}
\caption{Analytical test 2, errors with three variants of Scheme A: centred, upstream
and with \eqref{eq:ddtmod}}
\label{tab:ta2_a}
\end{table}

\begin{figure}[!ht] 
\begin{center}
\resizebox{0.6\textwidth}{!}{\includegraphics{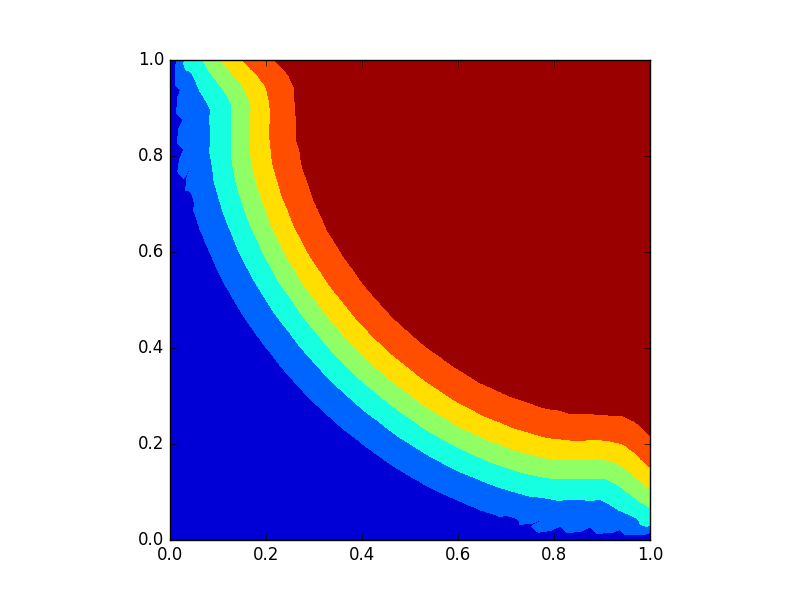}\includegraphics{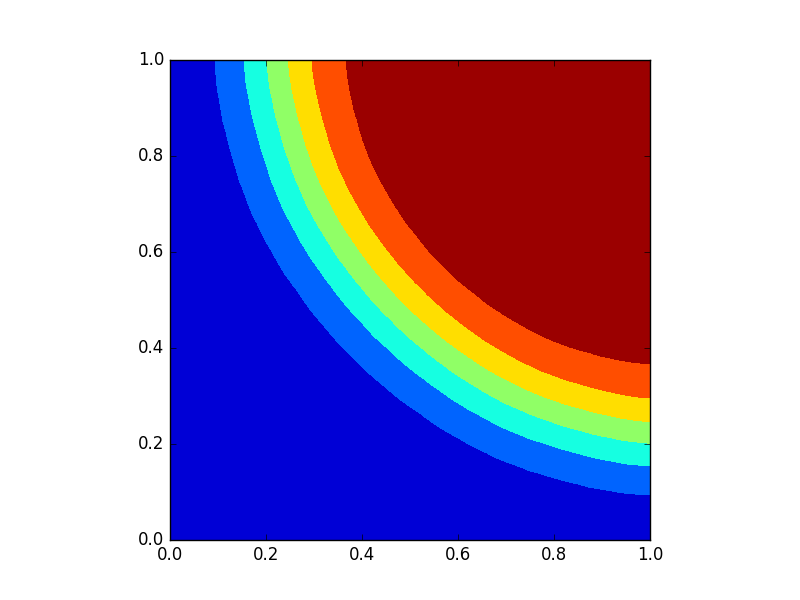}\includegraphics{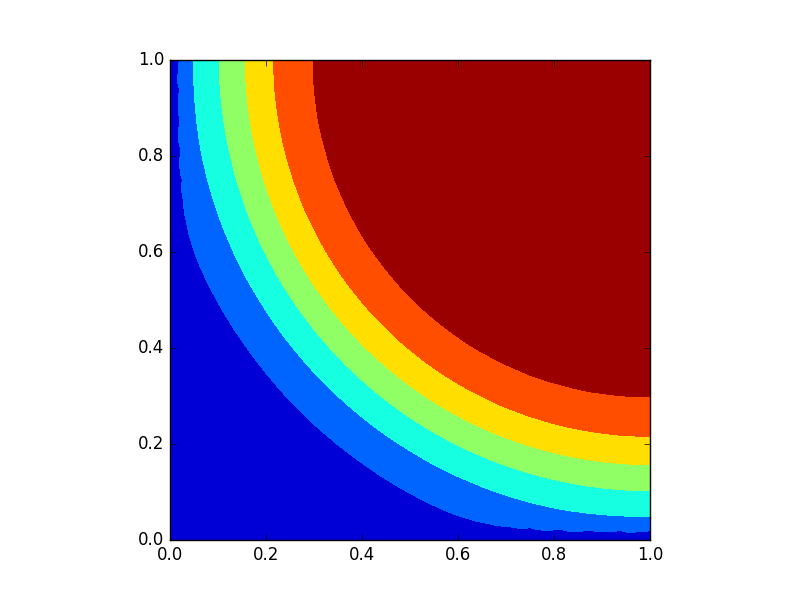}}
\resizebox{0.6\textwidth}{!}{\includegraphics{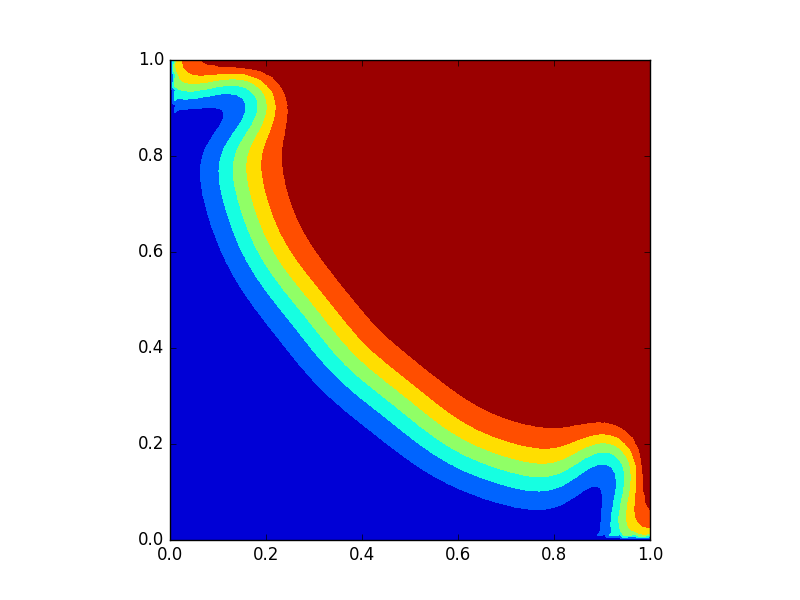}\includegraphics{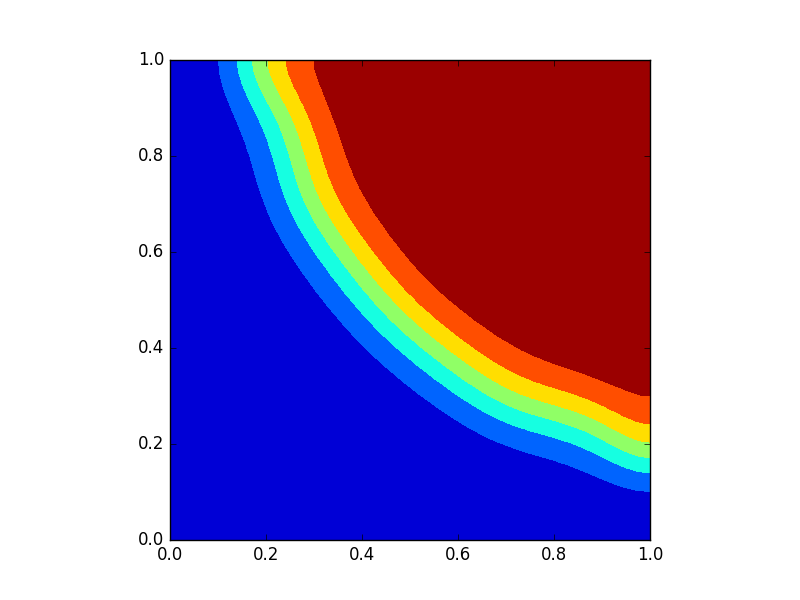}\includegraphics{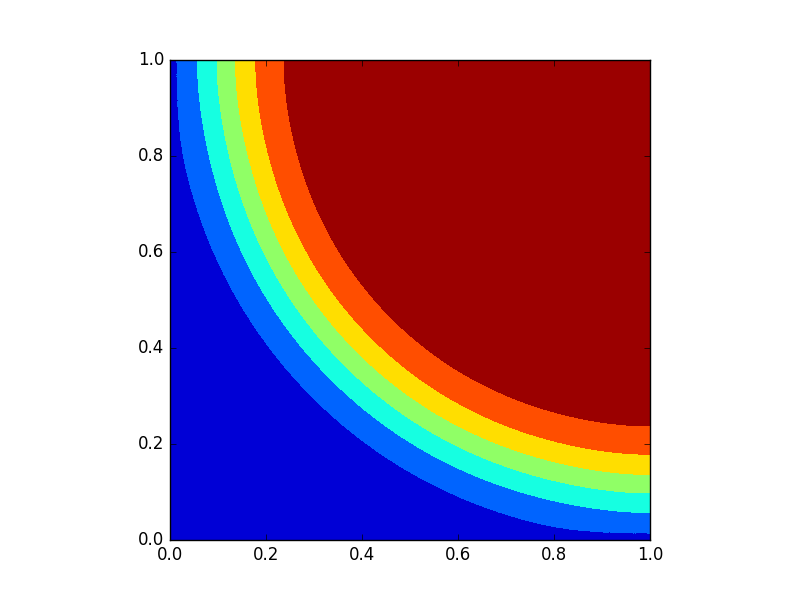}}
\resizebox{0.6\textwidth}{!}{\includegraphics{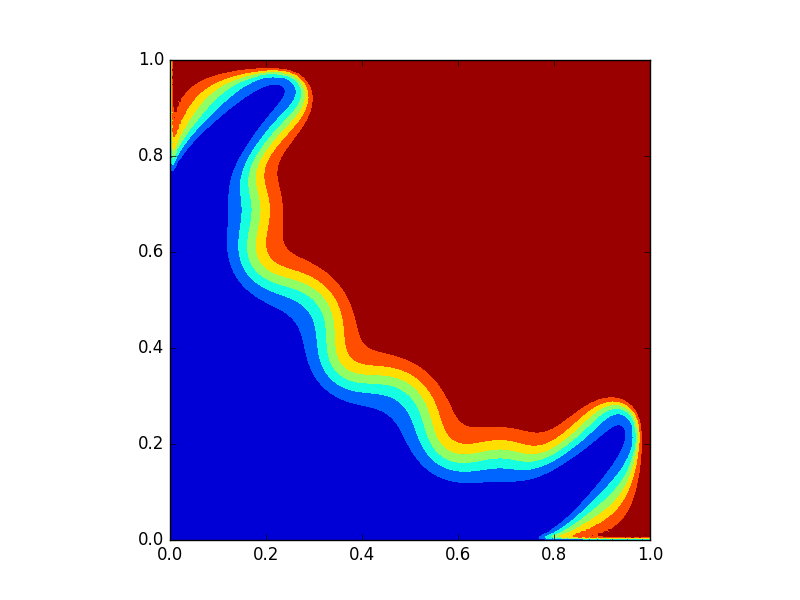}\includegraphics{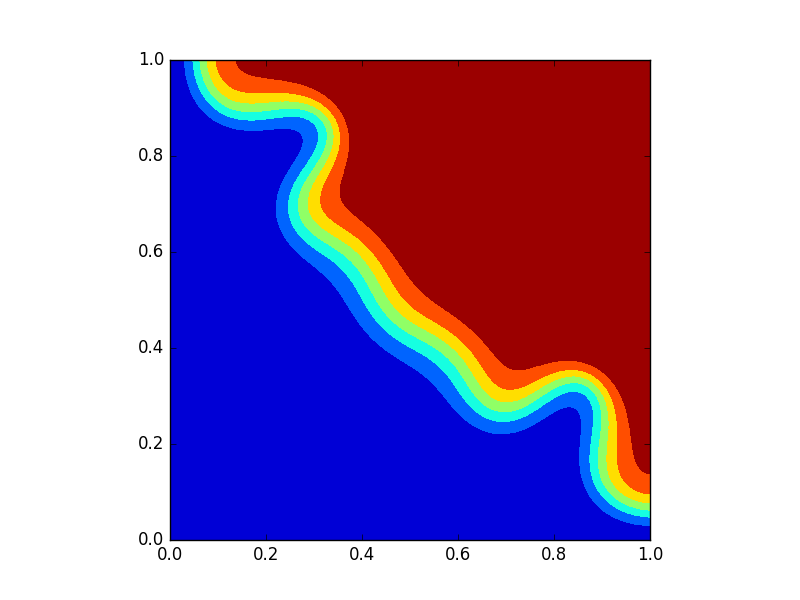}\includegraphics{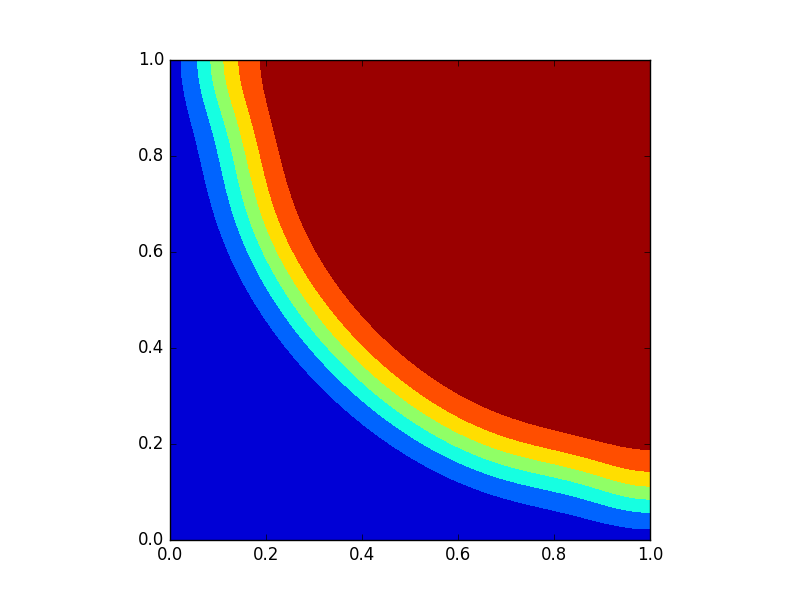}}
\end{center}
\caption{Analytical test 2, Scheme B. From top to bottom:  Mesh4 and time step $0.02$; Mesh5 and time step $0.01$; Mesh6 and time step $0.005$. From left to right: centred, upstream,   modification by \eqref{eq:ddtmod} (right).
Values from $0$ (dark blue) to $1$ (dark red).}
\protect{\label{fig:scheme_b_ta2}}
\end{figure}

\begin{table}
\begin{tabular}{|c|c|c|c|c|}
\hline
Variant & Mesh & $\delta t$  & $L^1$ error & $L^2$ error \\
\hline
\multirow{3}{*}{centred} & Mesh4 & 0.02 & 9.05E-2 & 1.46E-1 \\
\cline{2-5}
& Mesh5 & 0.01 & 7.23E-2 & 1.47E-1 \\
\cline{2-5}
& Mesh6 &0.005 &  9.42E-2 & 2.29E-1 \\
\hline
\hline
\multirow{3}{*}{upstream} & Mesh4 & 0.02 & 1.71E-1 & 2.98E-1 \\
\cline{2-5}
& Mesh5 & 0.01 & 1.58E-1 & 3.03E-1 \\
\cline{2-5}
& Mesh6 &0.005 & 1.66E-1 & 3.34E-1 \\
\hline
\hline
\multirow{3}{*}{With \eqref{eq:ddtmod}} & Mesh4 &0.02 & 1.22E-1 & 1.77E-1 \\
\cline{2-5}
&Mesh5 & 0.01 & 8.55E-2 & 1.39E-1 \\
\cline{2-5}
&Mesh6 & 0.005 & 5.65E-2 & 1.04E-1\\
\hline
\end{tabular}
\caption{Analytical test 2, errors with three variants of Scheme B: centred, upstream
and with \eqref{eq:ddtmod}}
\label{tab:ta2_b}
\end{table}

\subsection{Comparison with test cases from the literature}

We now apply Schemes A and B defined in Section \ref{ssec:examples} to  the first and second test cases in Wang-et-al. \cite{wan00} and Chainais-Hillairet--Droniou \cite{cd07}. In these test cases, the source terms do not satisfy Hypotheses \eqref{hyp:source}, but, as we show below and as already noted in \cite{cd07}, this does not seem to prevent numerical
convergence. Additionally, in the second test case, the diffusion--dispersion tensor $\DD$ does not 
satisfy all the hypotheses in \eqref{hyp:D} (it is only positive, not uniformly positive-definite). This will force us, as in the analytical test 2, to introduce some additional vanishing diffusion (see Section \ref{sec:ddnotboundedbybelow}).

In both cases the domain is a two-dimensional reservoir $\O = (0,1000)^2$ ft$^2$ and the final time is
set $T = 1080$ days ($\approx$ 3 years). Denoting by $\delta_A$ 
the Dirac measure at a point $A\in\overline{\O}$, set $\hat c(x) = 1$,  
$q^I(x)\ud x= 30\ \delta_{(1000,1000)}$ and $q^P(x)\ud x= 30\ \delta_{(0,0)}$; hence, solvent is injected at the top right corner of the domain and a mixture of oil
and solvent is recovered at the bottom left corner, both at a rate of 
$30$ ft$^2$/day. 
Set $\Phi(x) = 0.1$, $c_0(x)=0$ and, for almost-every $x\in\O$, $\K(x)=k\IdM$ with $k=80$ md. 

\begin{remark}[Implementation of the Dirac measures]
For both schemes, the reconstructions $\Pi_\disc$ provide piecewise constant
functions on a certain mesh. In the case of Scheme A, the boundary cells of this mesh are cut in 2 at the boundary edges and in four at the corners, so that the centers of the cells are located on the boundary. In the case of Scheme B, these cells are dual cells centred on the vertices of the triangular mesh. In both cases, there are four cells whose centre is precisely located at a corner of $\Omega$. In our test cases, the Dirac masses $q^+$ and $q^-$ in \eqref{scheme_p}--\eqref{scheme_c}  are simply taken into account in the source terms corresponding to the two corner cells where they are located.
\end{remark}

\subsubsection{Test 1: constant viscosity, only molecular diffusion}

We consider Test 1 of \cite{wan00}. Thus, $M=1$ and the diffusion--dispersion tensor is defined by \eqref{eq:ddt} with 
$\Phi\dm =1$ ft$^2$/day, $\Phi\dl = 0$ ft and $\Phi\dt= 0$ ft. $\DD$ then satisfies the coercivity properties that ensures that the reconstructed gradient of the concentration remains bounded. We observe a clear numerical convergence, comparing the refinement in space and time of both Schemes A and B (see Figure \ref{fig:scheme_a_b_t1}). 
\begin{figure}[!ht] 
\begin{center}
\resizebox{0.6\textwidth}{!}{\includegraphics{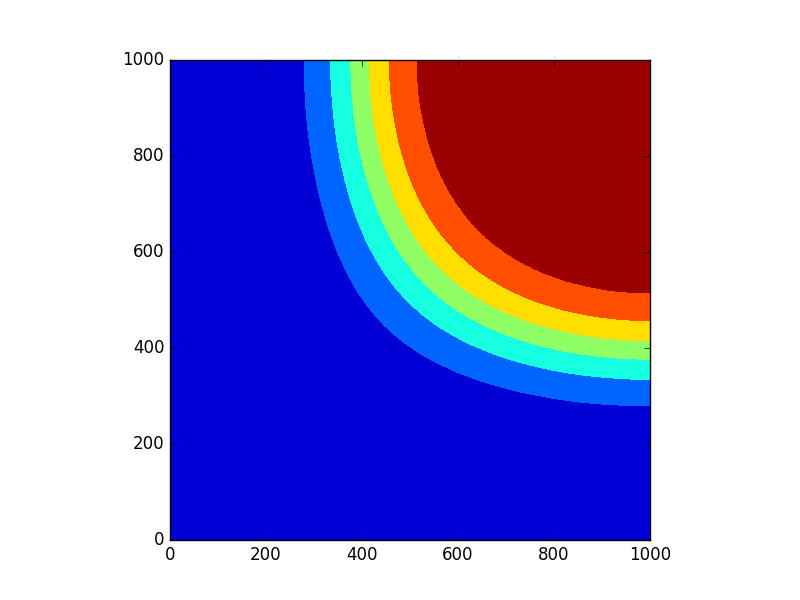}\includegraphics{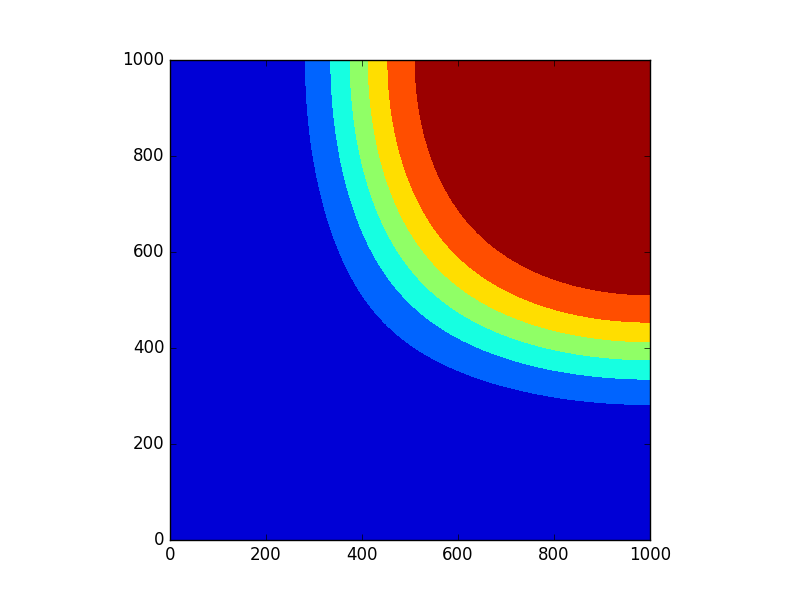}}
\resizebox{0.6\textwidth}{!}{\includegraphics{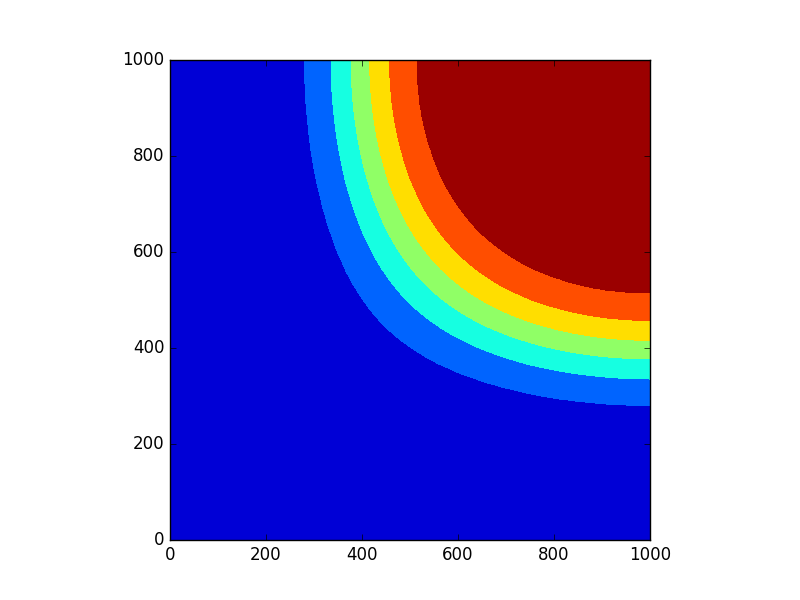}\includegraphics{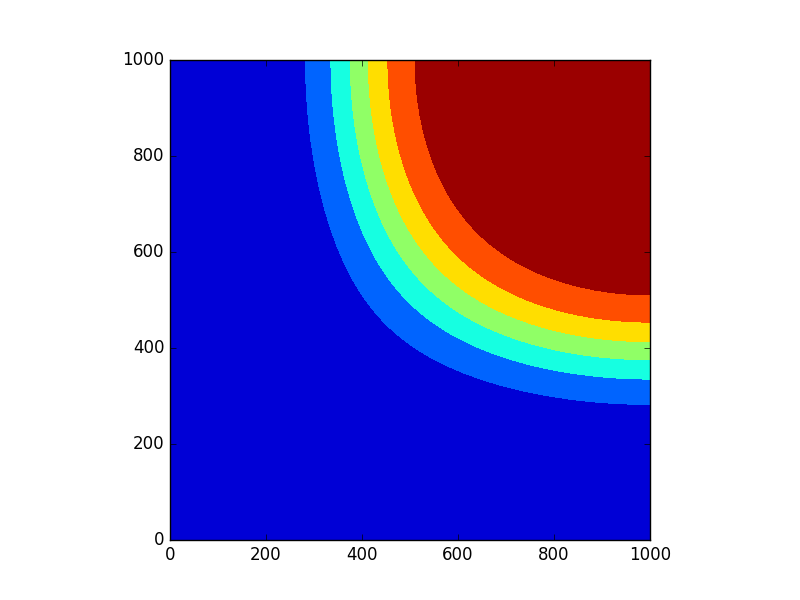}}
\end{center}
\caption{Test 1, concentration with centred scheme. Top: Scheme A :  $50\times50$ mesh with time step $18$ days (left); $100\times100$ mesh with  time step $9$ days (right).
Bottom: Scheme B on Mesh4 with time step $18$ days (left); on Mesh5 with time step $9$ days (right). 
Values from $0$ (dark blue) to $1$ (dark red).}
\protect{\label{fig:scheme_a_b_t1}}
\end{figure}

\subsubsection{Test 2: transverse and longitudinal dispersion, no molecular diffusion}\label{sec:ddnotboundedbybelow}

We now consider Test 2 of \cite{wan00}, in which $M=41$,
$\Phi\dm =0$ ft$^2$/day, $\Phi\dl = 5$ ft and $\Phi\dt= 0.5$ ft. Let us first examine the results provided by Schemes A and B used without modification. 
\begin{figure}[!ht] 
\begin{center}
\resizebox{0.9\textwidth}{!}{\includegraphics{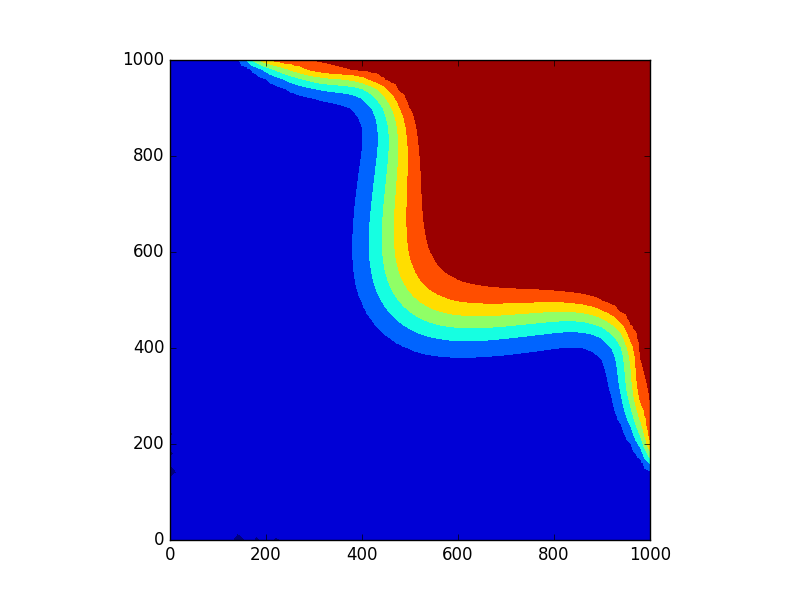}\includegraphics{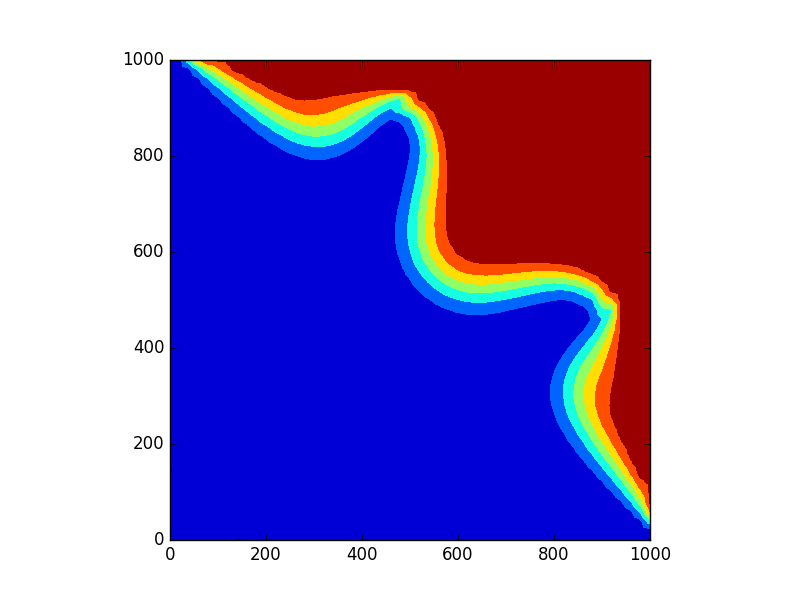}\includegraphics{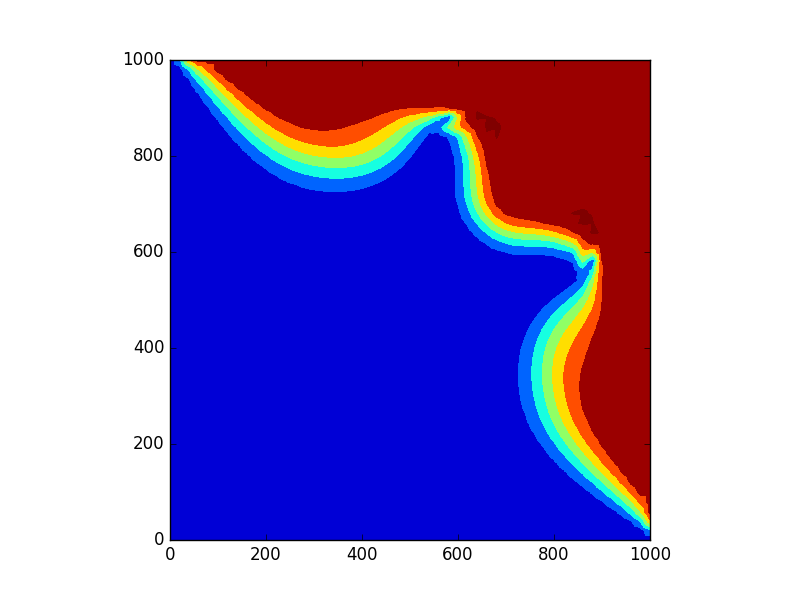}}
\resizebox{0.9\textwidth}{!}{\includegraphics{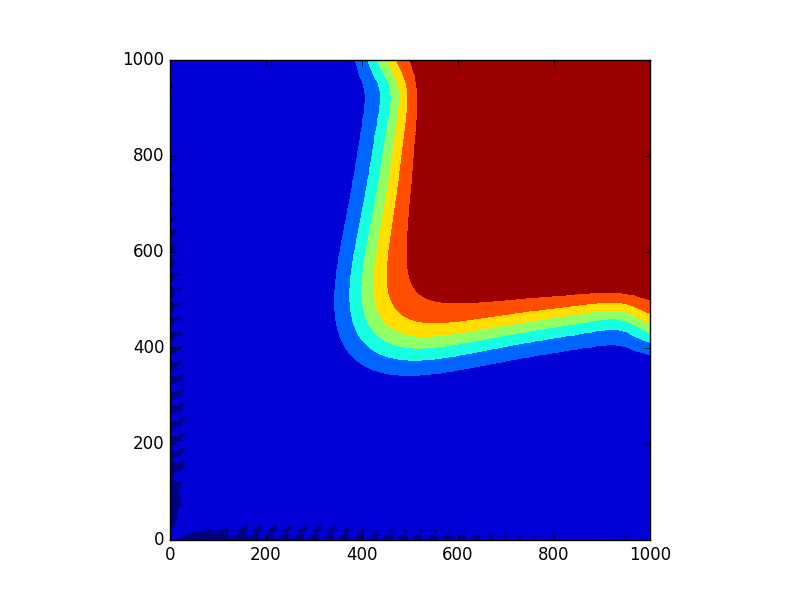}\includegraphics{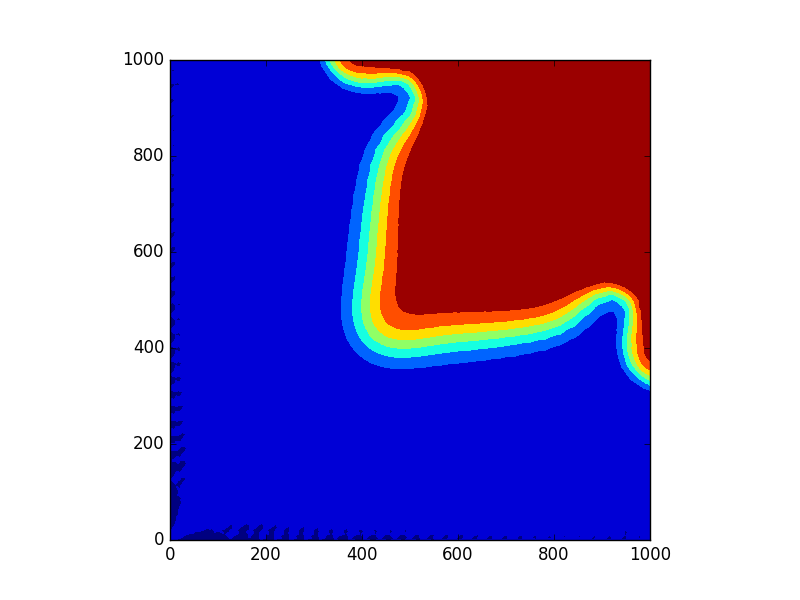}\includegraphics{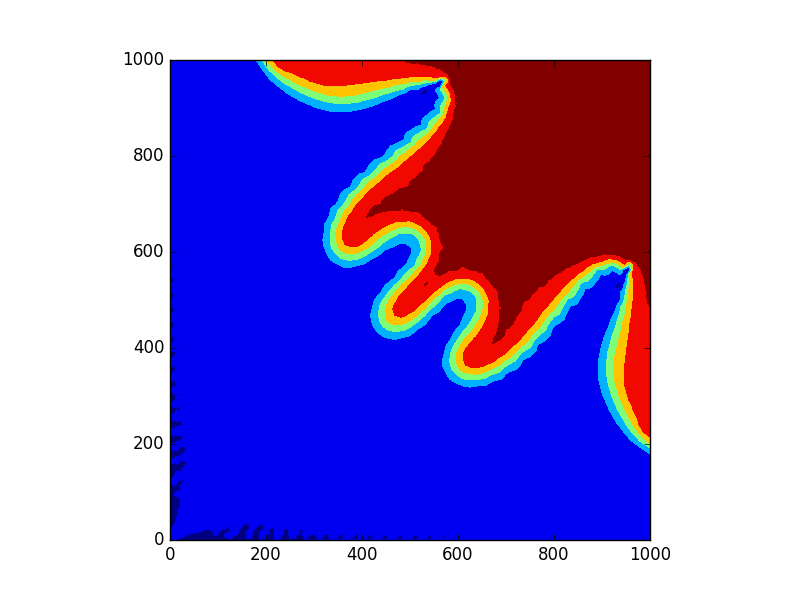}}
\end{center}
\caption{Top: Concentration with scheme A, $50\times50$ mesh with time step $36$ days (left), $18$ days (middle), $9$ days (right). 
Bottom: same with scheme B on Mesh5. 
Values from $0$ (dark blue) to $1$ (dark red).}
\protect{\label{fig:scheme_center}}
\end{figure}
Figure \ref{fig:scheme_center} shows the contours of the approximate concentration obtained at 
the final time $T$. We notice that many published results consider time steps equal to or larger than 36 days; with such a time step, the result obtained with scheme B is comparable to the ones in the literature. 
We notice however that, for smaller time steps, the results are no longer so nice. In that case, the transversal diffusion is not sufficient to stabilize the scheme, and introducing a vanishing diffusion becomes necessary.

\begin{remark}The reason for the better results observed with a larger time step can perhaps be found in the term $\frac{1}{2}\left(\PiD c^{(n+1)} - \PiD c^{(n)}\right)^2$ in \eqref{est:parabolic_disct}. This term can be seen as an approximation of $\delta t\int_0^T\int_{\Omega} (\partial_t c)^2$. When carried over in \eqref{eq:2ndenergy}, it therefore contributes to controling this time derivative of the concentration, which can result in an improved stability of the scheme. For smaller time steps, this control becomes less efficient due to the $\delta t$ factor.
\end{remark}

We saw in the analytical test 2 that upstreaming the schemes was not necessarily a good option to recover a stable and accurate solution. This is confirmed here in Figure \ref{fig:scheme_upstream}:
upstreaming is not a good paliative for the lack of diffusion (especially for Scheme A).
\begin{figure}[!ht] 
\begin{center}
\resizebox{0.9\textwidth}{!}{\includegraphics{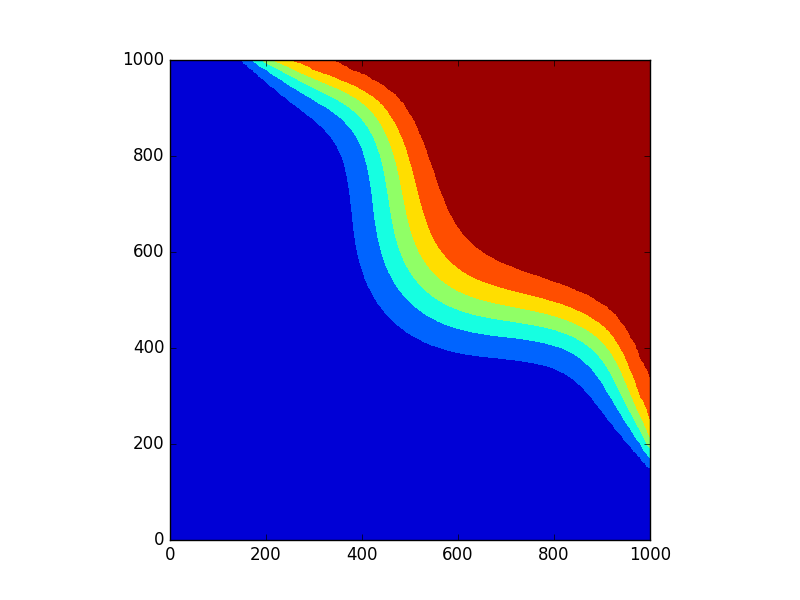}\includegraphics{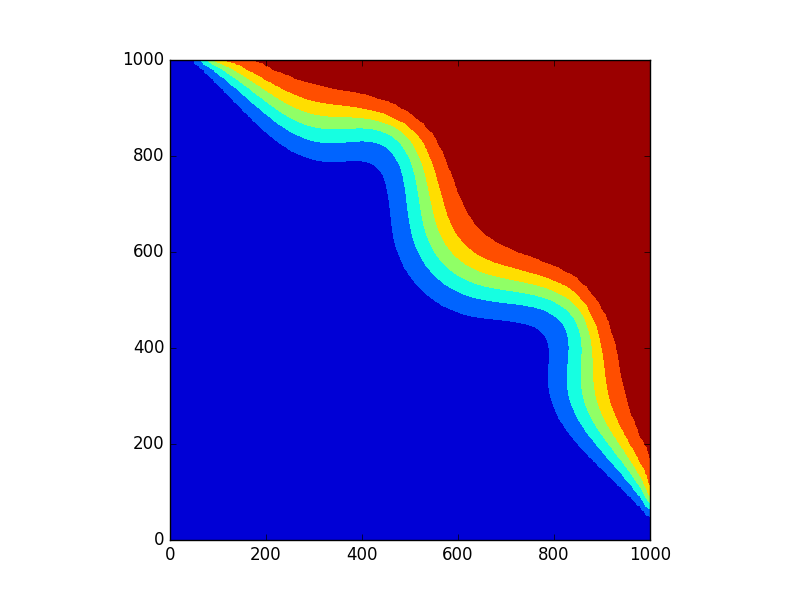}\includegraphics{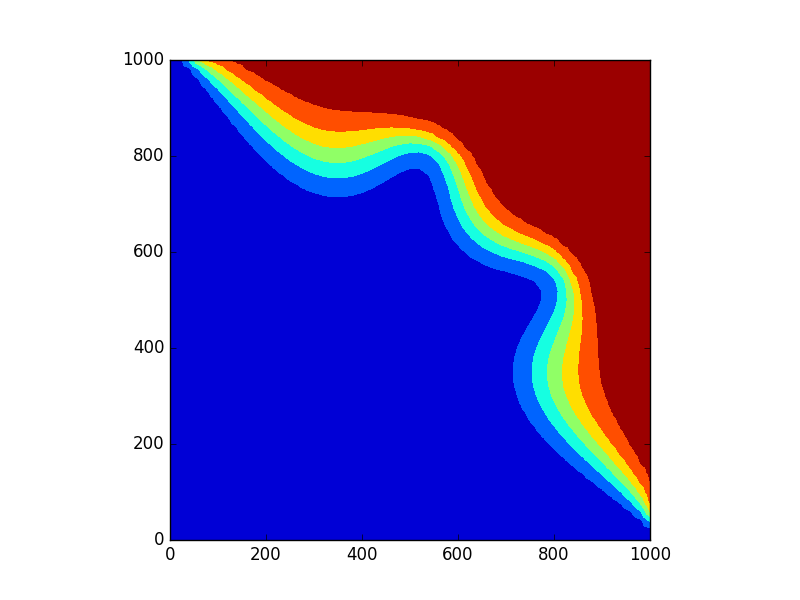}}
\resizebox{0.9\textwidth}{!}{\includegraphics{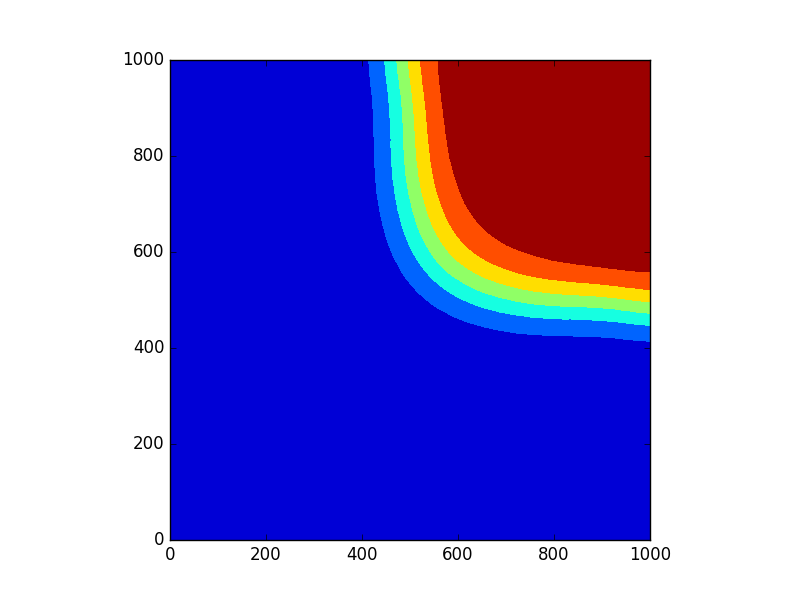}\includegraphics{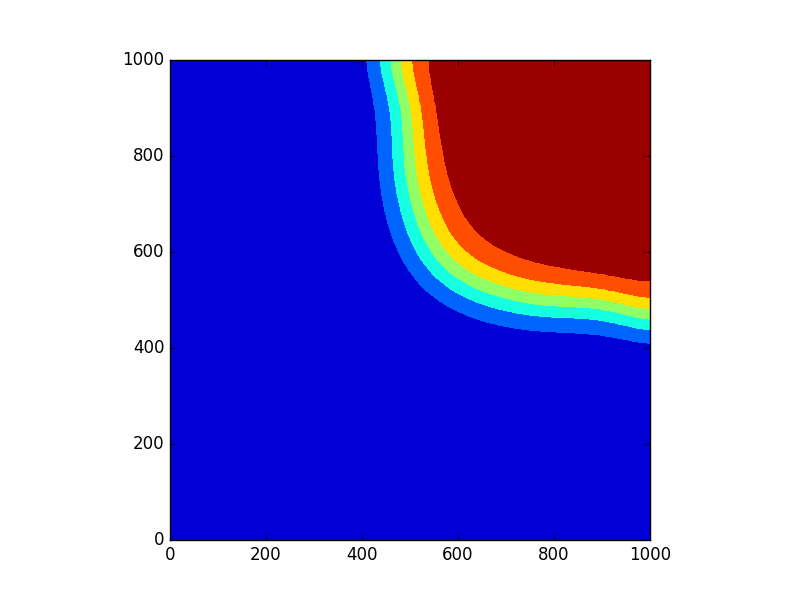}\includegraphics{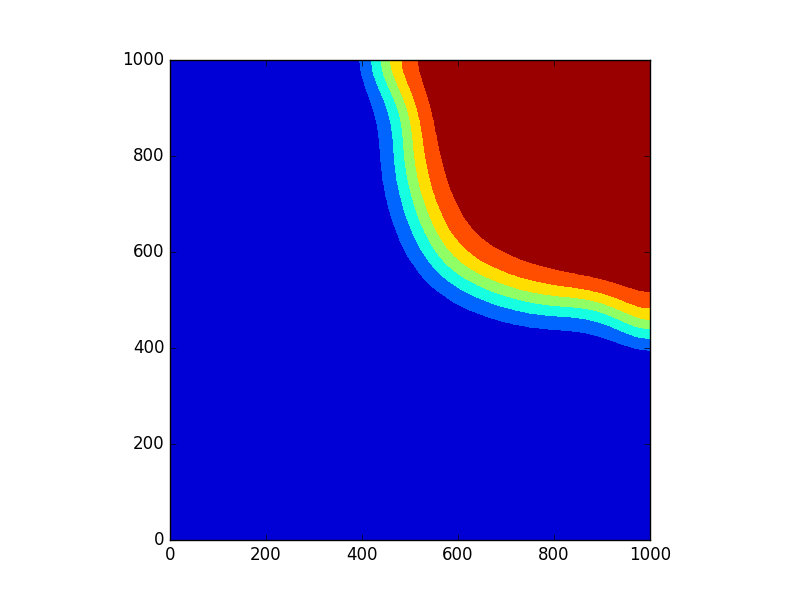}}
\end{center}
\caption{Top: Concentration with upstream version of scheme A, $50\times50$ mesh with time step $36$ days (left), $18$ days (middle), $9$ days (right). 
Bottom: same with upstream  version of scheme B on Mesh5. 
Values from $0$ (dark blue) to $1$ (dark red).}
\protect{\label{fig:scheme_upstream}}
\end{figure}

To recover a stable and accurate solution, we use the modification \eqref{eq:ddtmod}.
The results obtained with this modified scheme are shown in Figure \ref{fig:scheme_h}. As can be seen, both Schemes A and B then display a nice numerical convergence to the expected solution, as the time step decreases.

It was proved in \cite{dt16} that, when the molecular diffusion vanishes, weak solutions to \eqref{peacemanmodel} converge to solutions of the same model with zero molecular diffusion; the arguments given in \cite{dt16} are applicable to ${\DD}_h$ defined by \eqref{eq:ddtmod} and show that, as $h\to 0$, the solution with this modified diffusion--dispersion tensor converges to a solution of the model with zero molecular diffusion. This justifies using \eqref{eq:ddtmod} in numerical approximations.
\begin{figure}[!ht] 
\begin{center}
\resizebox{0.9\textwidth}{!}{\includegraphics{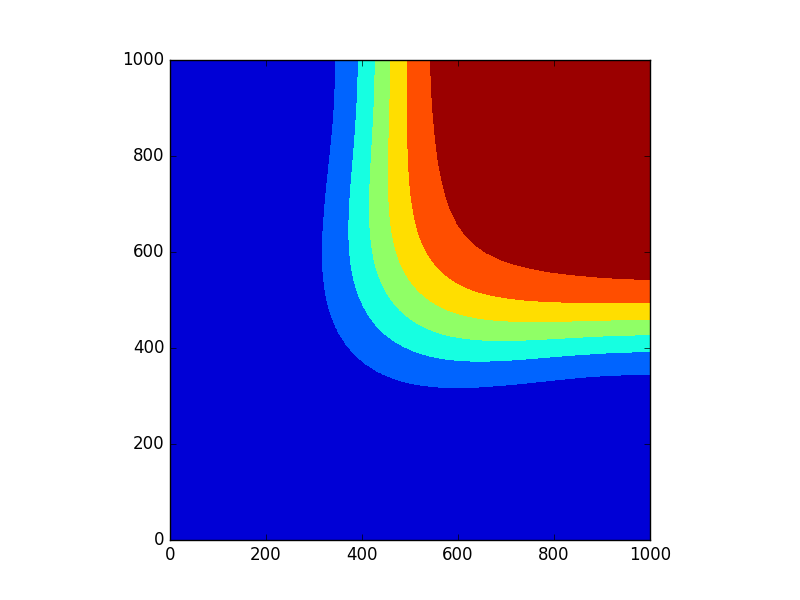}\includegraphics{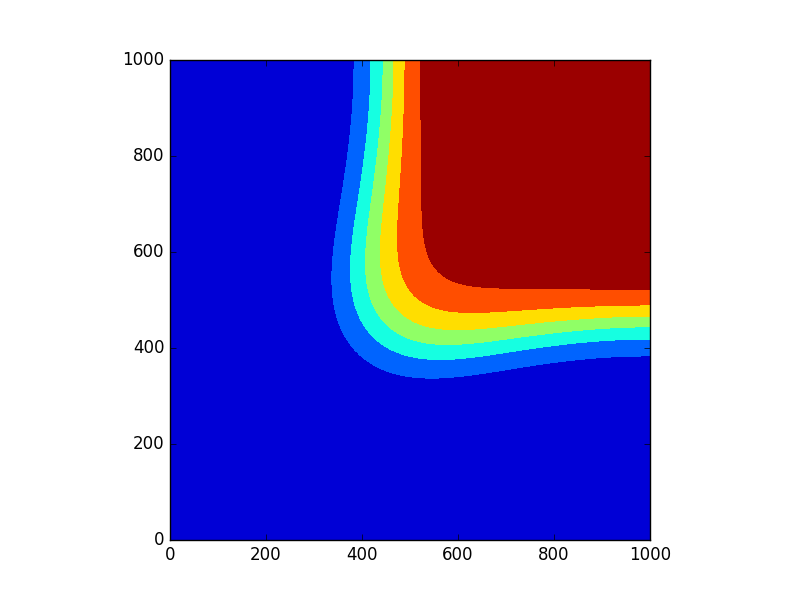}\includegraphics{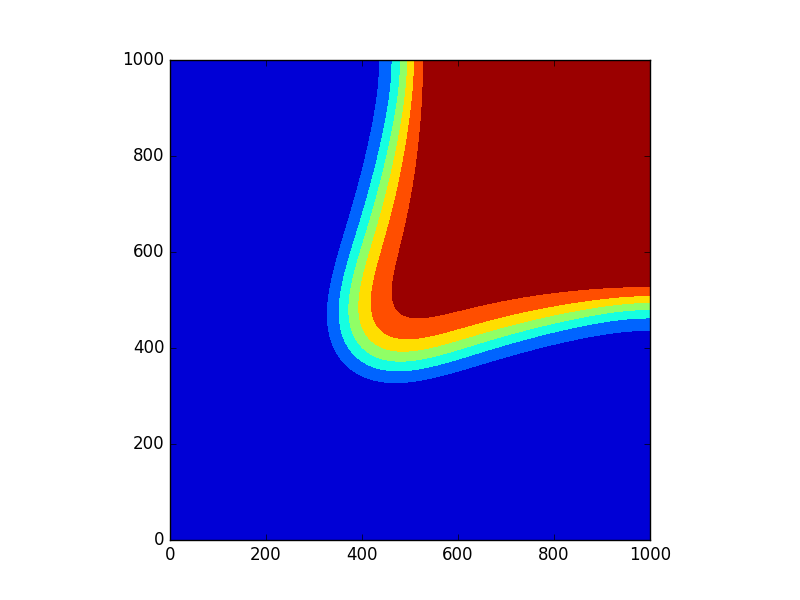}}
\resizebox{0.9\textwidth}{!}{\includegraphics{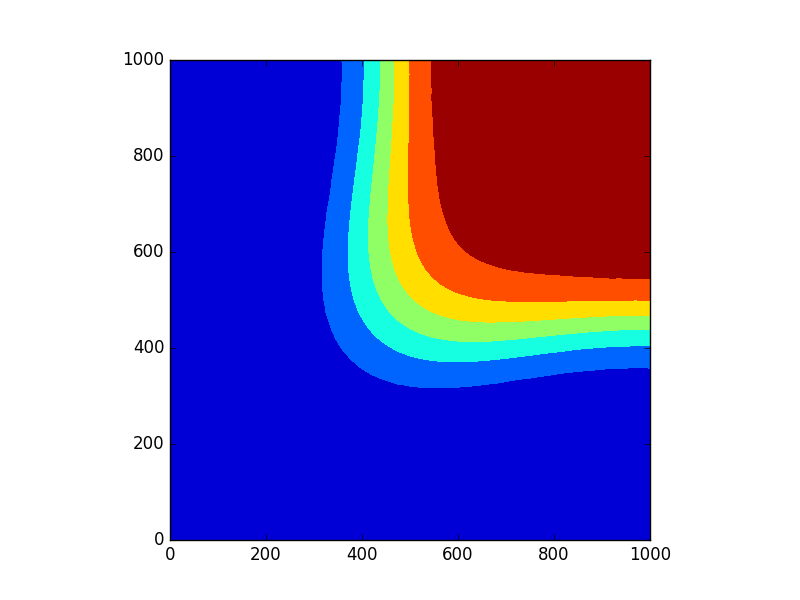}\includegraphics{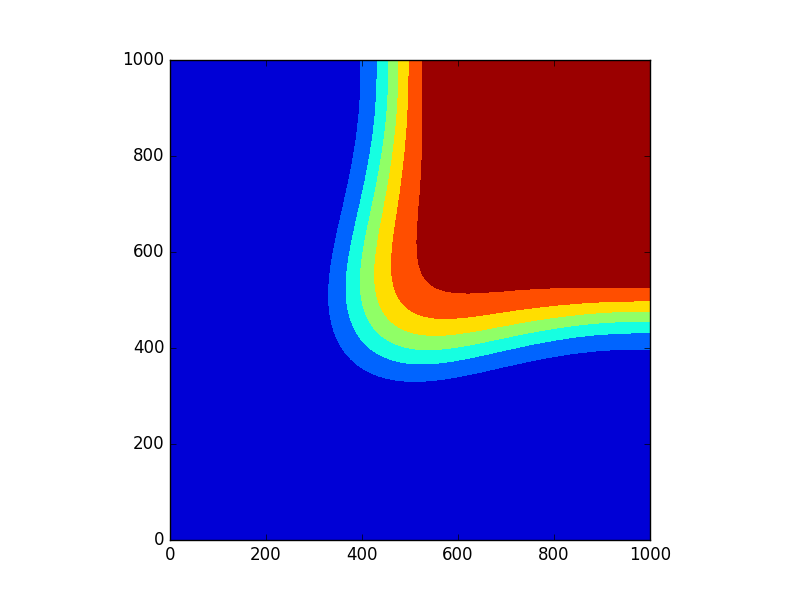}\includegraphics{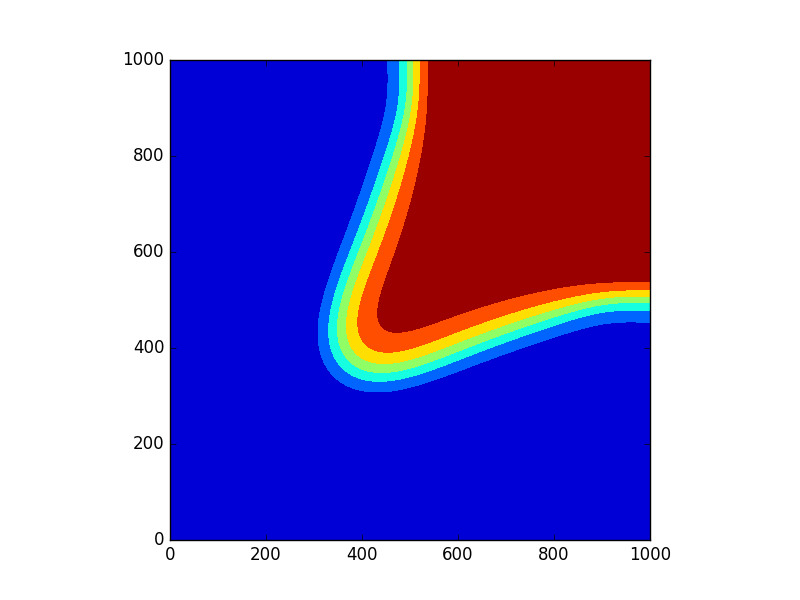}}
\end{center}
\caption{Top: Concentration using \eqref{eq:ddtmod} with Scheme A: $50\times50$ mesh with time step $36$ days (left), $100\times100$ mesh with time step $18$ days (middle), $200\times200$ mesh with time step $9$ days (right).
Bottom: same with Scheme B, Mesh4  with time step $36$ days (left), Mesh5  with time step $18$ days (middle), Mesh6 with time step $9$ days (right).
Values from $0$ (dark blue) to $1$ (dark red).}
\protect{\label{fig:scheme_h}}
\end{figure}

\section{Conclusion}

We applied the gradient discretisation method to a model of miscible incompressible flows in porous
media. The GDM framework enables us to write in a unified format many different numerical methods
for this model, from finite differences, to finite volumes and finite elements. We considered
a centred discretisation of the advective terms in the concentration equation.
The convergence analysis was performed using compactness techniques, to avoid imposing non-physical regularity assumptions on the data or the solution to the model. It applies to all methods fitting into the GDM framework.
A novelty of our analysis compared to similar results in the literature
is a uniform-in-time, $L^2(\O)$-strong convergence result for the approximate concentration.

We showed numerical results using two schemes that fit into the GDM framework: a finite-difference scheme
written on Cartesian meshes, and a mass-lumped $P^1$ finite element scheme on triangles. {It was demonstrated, on both analytical and physical test cases, that centred and upstream schemes behave badly in the case of varying viscosity and small physical diffusion. A modification was then proposed and shown to lead to stable and accurate solutions. This modification consists introducing some vanishing numerical diffusion, designed to be isotropic, to scale as the magnitude of the Darcy velocity, and to vanish with the mesh size in the same was as upstream numerical diffusions.}

\appendix

\section{Convergence lemmas}

For proofs of the first three of these lemmas, see \cite{dt14}. Lemma \ref{lem:equiv-unifconv}
is proved in \cite{DE15}, and the interpolation lemma \ref{lem:interp} is a special case of
\cite[Lemma 4.10]{gdmbook}.

\begin{lemma} \label{lem:h}
Let $\O$ be a bounded subset of $\RR^{N}$, $N\in\NN$ and for each $n\in\NN$, 
let $H_{n}:\Omega\times\RR^{N}\to\RR$ be a Carath\'eodory function such that
\begin{itemize}
\item there exist positive constants $C$ and $\gamma$ such that for almost-every $x\in\O$, 
\begin{equation}
|H_{n}(x,\xi)|\leq C(1+|\xi|^\gamma),\quad\mbox{$\forall\xi\in\RR^N$, $\forall n\in\NN$,} \label{eq:lem1}
\end{equation}
\item there is a Carath\'eodory function $H:\O\times\RR^N\to\RR$ such that for a.e. $x\in\O$, 
\begin{equation}
H_n(x,\cdot)\to H(x,\cdot)\quad\mbox{uniformly on compact sets as $n\to\infty$.} \label{eq:lem2}
\end{equation} 
\end{itemize}
If $p\in[\max(1,\gamma),\infty)$ and $(u_n)_{n\in\NN}\subset L^p(\O)^N$ is a sequence with 
$u_n\to u$ in $L^p(\O)^N$ as $n\to\infty$, then
$H_n(\cdot,u_n)\to H(\cdot,u)$ in $L^{p/\gamma}(\Omega)$ as $n\to\infty$.
\end{lemma}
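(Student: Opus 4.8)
The plan is to treat this as a continuity statement for a superposition (Nemytskii) operator whose nonlinearity $H_n$ varies with $n$, and to prove it by the standard ``subsequence $+$ dominated convergence'' scheme. The only nonstandard feature is that the convergence \eqref{eq:lem2} is merely locally uniform and holds only for a.e.\ $x$, so $u_n$ cannot be substituted directly into it; this is circumvented by first passing to an a.e.-convergent, $L^p$-dominated subsequence of $(u_n)_n$.

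First I would record that the limit $H$ is again a Carath\'eodory function satisfying the growth bound \eqref{eq:lem1}: $H(x,\cdot)$ is continuous for a.e.\ $x$ as a locally uniform limit of the continuous maps $H_n(x,\cdot)$, $H(\cdot,\xi)$ is measurable as an a.e.\ pointwise limit of the measurable maps $H_n(\cdot,\xi)$, and letting $n\to\infty$ in \eqref{eq:lem1} yields $|H(x,\xi)|\le C(1+|\xi|^\gamma)$ for a.e.\ $x$ and all $\xi$. Since $\O$ is bounded and $p\ge\max(1,\gamma)$, the maps $v\mapsto H_n(\cdot,v)$ and $v\mapsto H(\cdot,v)$ all send $L^p(\O)^N$ into $L^{p/\gamma}(\O)$.

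Next, to prove the convergence it suffices to show that every subsequence of $(H_n(\cdot,u_n))_n$ has a further subsequence converging to $H(\cdot,u)$ in $L^{p/\gamma}(\O)$; since that candidate limit is always the same, the full sequence then converges. Given a subsequence, I would use $u_n\to u$ in $L^p(\O)^N$ to extract and relabel $u_{n_k}$ with $u_{n_k}\to u$ a.e.\ on $\O$ and $|u_{n_k}|\le g$ a.e.\ for a fixed $g\in L^p(\O)$. For $x$ in the full-measure set where $u_{n_k}(x)\to u(x)$ and where the a.e.\ statements in \eqref{eq:lem1}--\eqref{eq:lem2} hold, the set $K_x=\{u(x)\}\cup\{u_{n_k}(x):k\in\NN\}$ is compact, so
\[
\big|H_{n_k}(x,u_{n_k}(x))-H(x,u(x))\big|\le \sup_{\xi\in K_x}\big|H_{n_k}(x,\xi)-H(x,\xi)\big| + \big|H(x,u_{n_k}(x))-H(x,u(x))\big|\longrightarrow 0,
\]
the first term by local uniform convergence of $H_{n_k}(x,\cdot)$ to $H(x,\cdot)$ on the compact set $K_x$, the second by continuity of $H(x,\cdot)$. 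Hence $H_{n_k}(\cdot,u_{n_k})\to H(\cdot,u)$ a.e.\ on $\O$; moreover, by \eqref{eq:lem1} and convexity of $s\mapsto s^{p/\gamma}$ (note $p/\gamma\ge 1$), $|H_{n_k}(\cdot,u_{n_k})|^{p/\gamma}\le C^{p/\gamma}(1+g^\gamma)^{p/\gamma}\le C'(1+g^p)\in L^1(\O)$ because $g\in L^p(\O)$ and $\O$ is bounded. Lebesgue's dominated convergence theorem then gives $H_{n_k}(\cdot,u_{n_k})\to H(\cdot,u)$ in $L^{p/\gamma}(\O)$, as required.

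I do not expect a genuine obstacle: the only delicate point is that one must not attempt to pass to the limit directly in $H_n(\cdot,u_n)$, since \eqref{eq:lem2} is only locally uniform and only a.e.\ in $x$. Extracting an a.e.-convergent, $L^p$-dominated subsequence of $(u_n)_n$ resolves this by confining the arguments $u_{n_k}(x)$ to an $x$-dependent compact set (so that \eqref{eq:lem2} is applicable) while simultaneously providing the fixed $L^1$ majorant needed for dominated convergence. An alternative avoiding subsequences would split $\O$ into $\{|u_n|\le R\}$ and $\{|u_n|>R\}$, bounding the first part by $\big\|\sup_{|\xi|\le R}|H_n(\cdot,\xi)-H(\cdot,\xi)|\big\|_{L^{p/\gamma}(\O)}\to 0$ (dominated convergence, using $\O$ bounded) and the second by the equi-integrability of $(|u_n|^p)_n$ coming from its $L^p$-convergence; the subsequence route seems shorter.
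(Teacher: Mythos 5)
Your proof is correct: the subsequence principle, the extraction of an a.e.-convergent subsequence dominated by a fixed $g\in L^p(\O)$, the splitting over the compact set $K_x$ (locally uniform convergence of $H_{n_k}(x,\cdot)$ plus continuity of $H(x,\cdot)$), and dominated convergence with the majorant $C'(1+g^p)$ together give the claimed $L^{p/\gamma}(\O)$ convergence, and the exponent bookkeeping ($p/\gamma\ge 1$ since $p\ge\max(1,\gamma)$) is handled properly. The paper itself gives no proof of this lemma, deferring to \cite{dt14}; your argument is the standard Nemytskii-type continuity proof and is essentially the same approach as in that reference, so there is nothing to flag.
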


\begin{corollary} \label{cor:h}
Let $\O$, $H_n$ and $H$ satisfy the hypotheses of Lemma \ref{lem:h}. If $p,q\in [\max(1,\gamma),\infty)$ 
and $(u_n)_{n\in\NN}\subset L^p(0,T; L^q(\O)^N)$ is a sequence with 
$u_n\to u$ in $L^p(0,T; L^q(\O)^N)$ as $n\to\infty$, then
$H_n(\cdot,u_n)\to H(\cdot,u)$ in $\Leb{p/\gamma}{q/\gamma}$ as $n\to\infty$.
\end{corollary}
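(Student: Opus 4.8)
The plan is to apply Lemma \ref{lem:h} fibrewise in the time variable and then pass to the limit in the time integral by dominated convergence. Since the conclusion is a strong-convergence statement, it suffices to prove that an arbitrary subsequence of $(H_n(\cdot,u_n))_{n\in\NN}$ has a further subsequence converging to $H(\cdot,u)$ in $\Leb{p/\gamma}{q/\gamma}$; this lets me replace the mere $L^p(0,T;L^q(\O)^N)$ convergence of $(u_n)_{n\in\NN}$ by almost-everywhere-in-time information.

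So I would fix a subsequence (not relabelled). From $u_n\tends u$ in $L^p(0,T;L^q(\O)^N)$, the scalar sequence $t\mapsto\norm{u_n(t,\cdot)-u(t,\cdot)}_{L^q(\O)^N}$ tends to $0$ in $L^p(0,T)$, so by the standard partial converse to the dominated convergence theorem there is a further subsequence along which this scalar sequence converges to $0$ for almost every $t$ and is bounded by a fixed $h\in L^p(0,T)$. Setting $g:=h+\norm{u(\cdot,\cdot)}_{L^q(\O)^N}\in L^p(0,T)$, we then have $\norm{u_n(t,\cdot)}_{L^q(\O)^N}\le g(t)$ for a.e.\ $t$ and all $n$, and $u_n(t,\cdot)\tends u(t,\cdot)$ in $L^q(\O)^N$ for a.e.\ $t$. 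Applying Lemma \ref{lem:h} on the spatial domain $\O$ with the exponent $q\ge\max(1,\gamma)$ gives, for a.e.\ $t$, $H_n(\cdot,u_n(t,\cdot))\tends H(\cdot,u(t,\cdot))$ in $L^{q/\gamma}(\O)$; that is, the integrand defining the $\Leb{p/\gamma}{q/\gamma}$ norm converges to $0$ pointwise in $t$.

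It remains to dominate that integrand. The growth bound \eqref{eq:lem1} yields, for a.e.\ $t$ and all $n$,
\[
\norm{H_n(\cdot,u_n(t,\cdot))}_{L^{q/\gamma}(\O)}\le C\left(\meas(\O)^{\gamma/q}+\norm{u_n(t,\cdot)}_{L^q(\O)^N}^{\gamma}\right)\le C\left(1+g(t)^{\gamma}\right),
\]
and the same estimate with $\norm{u(t,\cdot)}_{L^q(\O)^N}\le g(t)$ in place of $\norm{u_n(t,\cdot)}_{L^q(\O)^N}$ controls $\norm{H(\cdot,u(t,\cdot))}_{L^{q/\gamma}(\O)}$. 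Raising to the power $p/\gamma\ge 1$, the integrand $\norm{H_n(\cdot,u_n(t,\cdot))-H(\cdot,u(t,\cdot))}_{L^{q/\gamma}(\O)}^{p/\gamma}$ is bounded, uniformly in $n$, by $C\bigl(1+g(t)^{p}\bigr)\in L^1(0,T)$. Dominated convergence in $t$ then gives $H_n(\cdot,u_n)\tends H(\cdot,u)$ in $\Leb{p/\gamma}{q/\gamma}$ along this subsequence, and the subsequence principle finishes the proof.

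The only genuinely delicate point is the joint extraction carried out in the second step: producing a \emph{single} subsequence along which $u_n(t,\cdot)$ converges in $L^q(\O)^N$ for a.e.\ $t$ and simultaneously $\norm{u_n(t,\cdot)}_{L^q(\O)^N}$ has an $L^p(0,T)$ majorant. This is precisely the classical refinement of the statement ``$L^p$ convergence implies almost-everywhere convergence along a subsequence'' (with an accompanying dominating function), applied here to the scalar sequence $\norm{u_n-u}_{L^q(\O)^N}$ in $L^p(0,T)$; once it is available, the rest is Lemma \ref{lem:h} and dominated convergence.
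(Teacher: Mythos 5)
Your proof is correct. The paper itself does not reproduce a proof of this corollary (it simply refers to \cite{dt14}), and your argument---reducing to Lemma \ref{lem:h} applied in space for a.e.\ $t$ after extracting, via the partial converse of the dominated convergence theorem, a subsequence along which $u_n(t,\cdot)\to u(t,\cdot)$ in $L^q(\O)^N$ for a.e.\ $t$ with an $L^p(0,T)$ majorant, then concluding by the growth bound, dominated convergence in $t$, and the subsequence principle---is exactly the standard way the space-time statement is deduced from the spatial lemma. The only implicit step worth spelling out is that the limit $H$ inherits the growth bound \eqref{eq:lem1} (let $n\to\infty$ in \eqref{eq:lem1} using \eqref{eq:lem2}), which is what you use to dominate $\norm{H(\cdot,u(t,\cdot))}_{L^{q/\gamma}(\O)}$.
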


\begin{lemma} \label{lem:ws}
Let $\O$ be a bounded, open subset of $\RR^{N}$ and for every $n\in\NN$, 
let $w_n:\O\times(0,T)\to\RR$ and $v_n:\O\times(0,T)\to\RR$ be such that 
$w_n\to w$ in $\Leb{r_1}{s_1}$, and
$v_n\weakto v$ weakly in $\Leb{r_2}{s_2}$,
where $r_1,r_2,s_1,s_2\geq1$ are such that $1/r_{1} + 1/r_{2}\leq 1$ 
and $1/s_{1} + 1/s_{2}\leq 1$. Suppose also that the sequence $(w_{n}v_{n})_{n\in\NN}$ 
is bounded in $\Leb{a}{b}$, where $a,b\in(1,\infty)$. Then
$w_{n}v_{n}\weakto wv$ weakly in $\Leb{a}{b}$.
\end{lemma}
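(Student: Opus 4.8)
The plan is to use reflexivity to extract a weakly convergent subsequence of $(w_nv_n)_{n\in\NN}$, to identify its limit as $wv$ by testing against smooth functions, and then to upgrade this to weak convergence of the whole sequence.

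First I would note that, since $a,b\in(1,\infty)$, the space $\Leb{a}{b}$ is reflexive, so from the boundedness of $(w_nv_n)_{n\in\NN}$ in this space we extract a subsequence (not relabelled) such that $w_nv_n\weakto\zeta$ weakly in $\Leb{a}{b}$ for some $\zeta$. It then remains to prove that $\zeta=wv$: once this is established for every weakly convergent subsequence, uniqueness of the cluster point together with reflexivity forces the entire sequence $(w_nv_n)_{n\in\NN}$ to converge weakly to $wv$ in $\Leb{a}{b}$.

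To identify $\zeta$, fix $\varphi\in C^\infty_c(\O\times(0,T))$; by density of this space in $\Leb{a'}{b'}$ (recall $a',b'\in(1,\infty)$), it suffices to check that $\int_0^T\int_\O\zeta\varphi\ud x\ud t=\int_0^T\int_\O wv\,\varphi\ud x\ud t$. Since $\varphi$ is bounded and $w_n\to w$ in $\Leb{r_1}{s_1}$, we have $w_n\varphi\to w\varphi$ in $\Leb{r_1}{s_1}$. The hypotheses $1/r_1+1/r_2\leq1$ and $1/s_1+1/s_2\leq1$ read $r_1\geq r_2'$ and $s_1\geq s_2'$, so — because $\O$ and $(0,T)$ are bounded — $\Leb{r_1}{s_1}$ embeds continuously in $\Leb{r_2'}{s_2'}$, the dual of the space in which $(v_n)_{n\in\NN}$ converges weakly. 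Hence $w_n\varphi\to w\varphi$ strongly in $\Leb{r_2'}{s_2'}$, and combining this with the weak convergence $v_n\weakto v$ in $\Leb{r_2}{s_2}$ and the boundedness of $(v_n)_{n\in\NN}$ in that space, a standard weak--strong argument gives $\int_0^T\int_\O w_nv_n\varphi\ud x\ud t=\int_0^T\int_\O v_n(w_n\varphi)\ud x\ud t\to\int_0^T\int_\O v(w\varphi)\ud x\ud t=\int_0^T\int_\O wv\,\varphi\ud x\ud t$. On the other hand, $\varphi\in\Leb{a'}{b'}$ and $w_nv_n\weakto\zeta$ yield $\int_0^T\int_\O w_nv_n\varphi\ud x\ud t\to\int_0^T\int_\O\zeta\varphi\ud x\ud t$. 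Therefore $\zeta=wv$ a.e.; note that $wv\in L^1(\O\times(0,T))$ by H\"older's inequality on the bounded domain (so the integrals above are meaningful), and the identity $\zeta=wv$ with $\zeta\in\Leb{a}{b}$ shows in passing that $wv$ belongs to that space.

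The argument is essentially routine; the only points requiring care are the exponent bookkeeping — matching $\Leb{r_1}{s_1}$ to the predual $\Leb{r_2'}{s_2'}$ of the space containing $(v_n)_{n\in\NN}$, using the hypotheses $1/r_1+1/r_2\leq1$, $1/s_1+1/s_2\leq1$ together with the boundedness of $\O$ and $(0,T)$ — and the final passage from subsequential weak convergence to weak convergence of the whole sequence, which is the standard consequence of reflexivity and uniqueness of the limit.
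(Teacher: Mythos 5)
Your proof is correct: the paper does not prove Lemma \ref{lem:ws} itself but refers to \cite{dt14}, and your argument is the same standard route used there --- boundedness in the reflexive space $\Leb{a}{b}$, identification of the weak limit via the weak--strong pairing of $v_n$ against $w_n\varphi$ for test functions $\varphi$, and then uniqueness of the cluster point to upgrade to the whole sequence. The only cosmetic point is that identifying $\zeta=wv$ from the test-function identity rests on the vanishing-against-$C^\infty_c$ lemma for $L^1$ functions (since $wv$ is a priori only in $L^1$), not on density of $C^\infty_c$ in $\Leb{a'}{b'}$, but your closing remark about $wv\in L^1$ and $\zeta\in\Leb{a}{b}$ already covers this.
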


\begin{lemma} \label{lem:equiv-unifconv}
Let $(K,d_K)$ be a compact metric space, $(E,d_E)$ a metric space. 
Denote by $\cF(K,E)$ the space of functions $K\to E$, endowed with
the uniform metric $d_\cF(v,w)=\sup_{s\in K}d_E(v(s),w(s))$ (note that
this metric may take infinite values).

Let $(v_n)_{n\in\NN}$ be a sequence in $\cF(K,E)$ and $v:K\to E$ be
continuous. Then $v_n\to v$ for $d_{\cF}$ if and only if, for any $s\in K$ and
any sequence $(s_n)_{n\in\NN}\subset K$ converging to $s$ for $d_K$,
$v_n(s_n)\to v(s)$ for $d_E$.
\end{lemma}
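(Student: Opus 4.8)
The plan is to prove the two implications separately. The forward implication is an immediate consequence of the triangle inequality together with the continuity of $v$, while the converse needs a compactness argument on $(K,d_K)$ combined with a contradiction.

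For the direction ``uniform convergence $\Rightarrow$ pointwise-along-sequences convergence'', I would assume $d_\cF(v_n,v)\to 0$, fix $s\in K$ and a sequence $(s_n)_{n\in\NN}\subset K$ with $s_n\to s$ for $d_K$, and simply write
\[
d_E\big(v_n(s_n),v(s)\big)\le d_E\big(v_n(s_n),v(s_n)\big)+d_E\big(v(s_n),v(s)\big)\le d_\cF(v_n,v)+d_E\big(v(s_n),v(s)\big).
\]
The first term on the right tends to $0$ by hypothesis, and the second tends to $0$ by continuity of $v$ at $s$; hence $v_n(s_n)\to v(s)$.

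For the converse, I would argue by contradiction. If $v_n\not\to v$ for $d_\cF$, there exist $\eps>0$ and a subsequence $(n_k)_k$ with $d_\cF(v_{n_k},v)>\eps$ for all $k$. Since a supremum that exceeds $\eps$ must be exceeded at some point, I can select $\sigma_k\in K$ with $d_E(v_{n_k}(\sigma_k),v(\sigma_k))>\eps$. Here I use that $(K,d_K)$ is compact to extract a further subsequence (not relabelled) along which $\sigma_k\to s_\star$ in $K$. I then promote these ``bad points'' to a genuine full sequence $(s_n)_{n\in\NN}$ by setting $s_{n_k}=\sigma_k$ and $s_n=s_\star$ for every index $n$ not of the form $n_k$; one checks readily that $s_n\to s_\star$ for $d_K$. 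Applying the hypothesis to this sequence gives $v_{n_k}(\sigma_k)=v_{n_k}(s_{n_k})\to v(s_\star)$, while continuity of $v$ gives $v(\sigma_k)\to v(s_\star)$; therefore
\[
d_E\big(v_{n_k}(\sigma_k),v(\sigma_k)\big)\le d_E\big(v_{n_k}(\sigma_k),v(s_\star)\big)+d_E\big(v(s_\star),v(\sigma_k)\big)\longrightarrow 0,
\]
contradicting $d_E(v_{n_k}(\sigma_k),v(\sigma_k))>\eps$. Hence $v_n\to v$ for $d_\cF$.

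The only genuinely delicate point is this converse direction, and within it the essential use of the hypotheses is the appeal to compactness of $K$ to produce a limit point $s_\star$ of the bad points, followed by the slightly fussy (but routine) construction of a sequence indexed by all of $\NN$ converging to $s_\star$, so that the assumption can legitimately be invoked. Everything else reduces to the triangle inequality.
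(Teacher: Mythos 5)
Your proof is correct; the paper itself does not prove this lemma but defers to \cite{DE15}, and your argument is essentially the standard one: triangle inequality plus continuity of $v$ for the forward direction, and a compactness-plus-contradiction argument for the converse. The one delicate step — promoting the subsequence of bad points $\sigma_k$ to a full sequence $(s_n)_{n\in\NN}$ converging to $s_\star$ so that the hypothesis (stated for full sequences) can legitimately be invoked — is handled correctly.
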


\begin{lemma}[GDM interpolation of space-time functions]\label{lem:interp}
Let $(\discm)_{m\in\NN}$ be a consistent sequence of gradient discretisations in the sense
of Definition \ref{def:consistency}. Let $\theta\in\{0,1\}$ and, if $v\in \unknownsm$, set, for all $n=0,\ldots,N_m-1$
and $t\in (t^{(n)},t^{(n+1)}]$
\[
\PiDm^{(\theta)}v(t)=\PiDm v^{(n+\theta)}\quad\mbox{ and }\quad\gradDm^{(\theta)}(t)=\gradDm v^{(n+\theta)}
\]
(hence, using the notations \eqref{imp.ex}, $\PiDm^{(1)}=\PiDm$ and $\PiDm^{(0)}=\PiDme$).
Let $\varphi\in L^2(0,T;H^1(\O))$. Then there exists $v_m=(v_m^{(n)})_{n=0,\ldots,N_m}\in \unknownsm^{N_m+1}$
such that, as $m\to\infty$,
\[
\PiDm^{(\theta)}v_m\to \varphi\mbox{ in $\Leb{2}{2}$}\quad\mbox{ and }\quad\gradDm^{(\theta)}v_m\to \nabla\varphi\mbox{ in $\Leb[d]{2}{2}$.}
\]
If, moreover, $\partial_t\varphi\in \Leb{2}{2}$ then $(v_m)_{m\in\NN}$ can be chosen such that, additionally,
\[
\deltaDm v_m\to \partial_t\varphi\mbox{ in $\Leb{2}{2}$}\quad\mbox{ and }\quad\PiDm v_m^{(0)}\to \varphi(\cdot,0)\mbox{ in $L^2(\O)$}.
\]
\end{lemma}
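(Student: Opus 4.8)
The plan is to reduce the statement, by density in time and a diagonal argument, to functions that are piecewise simple in time, for which the interpolant can be assembled from finitely many spatial approximations. First I would fix, for each $m$ and each $\phi\in H^1(\O)$, an element $P_{\discm}\phi\in\unknownsm$ attaining the minimum that defines $S_{\discm}(\phi)$ (possible since $\unknownsm$ is finite-dimensional). By Definition \ref{def:consistency}, $\norm{\PiDm P_{\discm}\phi-\phi}_{L^2(\O)}\to 0$ and $\norm{\gradDm P_{\discm}\phi-\nabla\phi}_{L^2(\O)^d}\to 0$ as $m\to\infty$ for every fixed $\phi$; in particular these reconstructions remain bounded uniformly in $m$.

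Next I would treat $\varphi$ that, on the slabs of a fixed subdivision $0=\tau_0<\dots<\tau_L=T$, is either constant equal to some $\phi_\ell\in H^1(\O)$ on $(\tau_{\ell-1},\tau_\ell]$ (to obtain the first two convergences) or continuous and affine in time there between consecutive values $\phi_\ell\in H^1(\O)$ (to obtain the two involving $\partial_t\varphi$). I would then define $v_m^{(n)}$ by the \emph{same} time-piecewise formula, evaluated at $t^{(n)}$, with each $\phi_\ell$ replaced by $P_{\discm}\phi_\ell$. Because only the finitely many functions $\phi_\ell$ enter, and $\PiDm P_{\discm}\phi_\ell\to\phi_\ell$, $\gradDm P_{\discm}\phi_\ell\to\nabla\phi_\ell$, one checks on each time slab not meeting any $\tau_\ell$ that $\PiDm^{(\theta)}v_m(\cdot,t)-\varphi(\cdot,t)$ is a fixed scalar combination of the errors $\PiDm P_{\discm}\phi_\ell-\phi_\ell$ plus a within-slab variation of size $O(\deltatDm)$, and that $\deltaDm v_m(\cdot,t)$ (in the affine case) equals $\PiDm$ applied to $(P_{\discm}\phi_\ell-P_{\discm}\phi_{\ell-1})/(\tau_\ell-\tau_{\ell-1})$, which converges to $\partial_t\varphi(\cdot,t)$. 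The finitely many ``bad'' slabs --- those containing a $\tau_\ell$, together with the one-slab shift coming from $t^{(n)}$ versus $t^{(n+\theta)}$ --- have total length $O(\deltatDm)\to 0$ and bounded integrands, hence contribute nothing in the limit. This yields $\PiDm^{(\theta)}v_m\to\varphi$ in $\Leb{2}{2}$, $\gradDm^{(\theta)}v_m\to\nabla\varphi$ in $\Leb[d]{2}{2}$, and in the affine case also $\deltaDm v_m\to\partial_t\varphi$ in $\Leb{2}{2}$ and $\PiDm v_m^{(0)}=\PiDm P_{\discm}(\varphi(\cdot,0))\to\varphi(\cdot,0)$ in $L^2(\O)$.

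Finally I would invoke density: functions as in the first case are dense in $L^2(0,T;H^1(\O))$, and continuous piecewise-affine-in-time functions are dense in $\{\psi\in L^2(0,T;H^1(\O)):\partial_t\psi\in\Leb{2}{2}\}$ for its graph norm (refine the time subdivision and interpolate in time, after a time-mollification if needed). Given $\varphi$, I would choose such $\varphi_k\to\varphi$, construct $(v_m^k)_m$ as above for each $k$, and pick $k(m)\to\infty$ slowly enough that $v_m:=v_m^{k(m)}$ inherits all the stated convergences --- a routine diagonal extraction, since for each fixed $k$ the approximation errors vanish as $m\to\infty$ and $\varphi_k\to\varphi$.

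The \emph{main obstacle}, and the reason one cannot simply interpolate $\varphi$ directly slab by slab, is that $S_{\discm}(\cdot)$ tends to $0$ pointwise on $H^1(\O)$ but \emph{not} uniformly on bounded sets: in a direct construction the spatial argument of $S_{\discm}$ would itself depend on $m$ and $n$, and no convergence could be extracted. Passing through the density step, which isolates the finitely many fixed functions $\phi_\ell$ making up $\varphi_k$, is exactly what removes this difficulty; everything else is the bookkeeping sketched above, harmless because $\deltatDm\to0$ and all reconstructed quantities in play are bounded in $L^2$.
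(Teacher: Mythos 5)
Your proof is correct. Note that the paper itself does not prove Lemma \ref{lem:interp}: it only invokes \cite[Lemma 4.10]{gdmbook}, and your density-plus-diagonalisation argument is essentially the standard proof behind that citation --- reduce to functions built from finitely many fixed spatial profiles $\phi_\ell\in H^1(\O)$ (piecewise constant in time for the first two convergences, continuous and piecewise affine in time for the two involving $\partial_t\varphi$), apply the spatial consistency of Definition \ref{def:consistency} to each $\phi_\ell$, absorb the finitely many slabs spoiled by the nodes $\tau_\ell$ and by the $\theta$-shift into a set of measure $O(\deltatDm)$ with bounded integrands, and then diagonalise in $(k,m)$. Two details are worth making explicit. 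First, the minimiser $P_{\discm}\phi$ exists because the functional defining $S_{\discm}(\phi)$ is continuous and coercive on the finite-dimensional space $\unknownsm$ (alternatively a $1/m$-near-minimiser suffices and avoids the issue entirely). Second, for the diagonal step to deliver $\PiDm v_m^{(0)}\to\varphi(\cdot,0)$ in $L^2(\O)$ you need $\varphi_k(\cdot,0)\to\varphi(\cdot,0)$; this does follow from your graph-norm density, via the embedding of $H^1(0,T;L^2(\O))$ into $C([0,T];L^2(\O))$, but it should be stated, since it is the only place where a pointwise-in-time value of $\varphi$ enters. With these remarks the argument is complete, and your closing observation --- that one must route through finitely many fixed spatial profiles because $S_{\discm}$ converges to $0$ pointwise on $H^1(\O)$ but not uniformly on bounded sets --- is exactly the reason the detour through density is needed rather than a direct slab-by-slab interpolation of $\varphi$.
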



\bibliography{peaceman-gs}{}
\bibliographystyle{plain}

\end{document}